\newtheorem{theorem}{Theorem}[section]
\newtheorem{lemma}[theorem]{Lemma}
\newtheorem{remark}[theorem]{Remark}
\newtheorem{proposition}[theorem]{Proposition}
\newtheorem{corollary}[theorem]{Corollary}
\renewcommand{\Re}{\mathrm{Re}\,}
\renewcommand{\Im}{\mathrm{Im}\,}
\DeclareMathOperator*{\Ran}{Ran \;}
\DeclareMathOperator{\sgn}{sgn}
\def\beq{\begin{eqnarray*}}\def\eeq{\end{eqnarray*}}
\def\bq{\begin{equation}}\def\eq{\end{equation}}
\newcommand{\N}{\mathbb{N}}
\newcommand{\R}{\mathbb{R}}
\newcommand{\C}{\mathbb{C}}
\newcommand{\dd}{\mathrm{d}}
\begin{document}
\title{Schr\"{o}dinger operator with a complex steplike potential}

\author{Tho Nguyen Duc}
\address[Tho Nguyen Duc]{Department of Mathematics, Faculty of Nuclear Sciences and Physical Engineering, Czech Technical University in Prague, ul. Trojanova 13/339, 12000 Prague, Czech Republic.}
\email{nguyed16@fjfi.cvut.cz}
\begin{abstract}
The purpose of this article is to study pseudospectral properties of the one-dimensional Schr\"{o}dinger operator perturbed by a complex steplike potential. By constructing the resolvent kernel, we show that the pseudospectrum of this operator is trivial if and only if the imaginary part of the potential is constant. As a by-product, a new method to obtain a sharp resolvent estimate is developed, answering a concern of Henry and Krej\v{c}i\v{r}\'{i}k, and a way to construct an optimal pseudomode is discovered, answering a concern of  Krej\v{c}i\v{r}\'{i}k and Siegl. The spectrum and the norm of the resolvent of the complex point interaction of the operator is also studied carefully in this article.
\end{abstract}
\maketitle
\section{Introduction}
\subsection{Context and Motivations}
Since the birth of quantum physics at the turn of the 20th century, spectral theory of self-adjoint operator has attracted considerable attention and  experienced many fertile developments. But its best friend \emph{non-self-adjoint} operator has only recently been noticed and investigated restricted to the last two decades. 
One of the challenges that we often face when we work with the non-self-adjoint operator is that there is no spectral theorem \cite{Krejcirik-Siegl15}. A clear evidence for this is the absence of the well-known formula for the norm of the resolvent
\begin{equation}\label{Norm equal 1/d}
\Vert (\mathscr{L}-z)^{-1} \Vert = \frac{1}{\textup{dist}(z,\sigma(\mathscr{L}))},
\end{equation}
which is valid for self-adjoint $\mathscr{L}$ (or generally, for unbounded normal operator, the reader can found a simple proof in subsection \ref{Subsec Non-trivial}). The lack of this formula is supported by the fact that there exist many non-self-adjoint operators whose resolvents blow up when the spectral parameter travels far away from their spectrum. Therefore, the notion of \emph{pseudospectrum} was suggested to address this pathological aspect of the non-self-adjoint operator \cite{Trefethen-Embree05,Davies07}. More precisely, given $\varepsilon>0$, the $\varepsilon$-\emph{pseudospectrum} of a linear operator $\mathscr{L}$ is defined by
\begin{equation}\label{Def Pseuspectrum 1}
\sigma_{\varepsilon}(\mathscr{L}) = \sigma(\mathscr{L})\cup\{z\in \C: \Vert (\mathscr{L}-z)^{-1} \Vert> \varepsilon^{-1} \}.
\end{equation}
The usefulness of pseudospectrum is that it can give the answer to the question \enquote{How does the spectrum respond to a slight change of the initial operator?} by virtue of the formula
\[ \sigma_{\varepsilon}(\mathscr{L}) = \bigcup_{V \in L(H),\, \Vert V \Vert<\varepsilon} \sigma(\mathscr{L}+V).\] 
Especially, when the pseudospectrum contains regions very far from the spectrum, it causes the instability of the spectrum under a small perturbation and reveals that it would be difficult to obtain the spectrum numerically. The reader may want to see a discussion about this topic in the introduction of \cite{Pravda-Starov08}.

According to the definition \eqref{Def Pseuspectrum 1}, the more information we have on the level set of the resolvent, the better described the pseudospectrum is. However, in almost all cases, it is not easy to calculate the resolvent of a given operator, and even when the resolvent is known, it is not easy to calculate its norm. Therefore, in practice, we often use an equivalent definition of the pseudospectrum, that is 
\begin{equation}\label{Def Pseuspectrum 2}
\sigma_{\varepsilon}(\mathscr{L}) = \sigma(\mathscr{L}) \cup \left\{z\in \C:\exists \Psi \in \textup{Dom}(\mathscr{L}),\, \Vert (\mathscr{L}-z) \Psi \Vert < \varepsilon \Vert \Psi \Vert \right\}.
\end{equation}
The number $z$ and the vector $\Psi$ in the definition \eqref{Def Pseuspectrum 2} are respectively called the \emph{pseudoeigenvalue} and \emph{pseudomode} (also known as the \emph{pseudoeigenfunction}, \emph{pseudoeigenvector}, or \emph{quasimode}) of $\mathscr{L}$. We list here some references \cite{Henry-Krejcirik17,Antonio-Petr22,Arnal-Siegl23} using the definition \eqref{Def Pseuspectrum 1} and some references \cite{Davies99,  Boulton02, Denker-Sjostrand-Zworski04, Krejcirik-Siegl-Tater-Viola15, Henry-Krejcirik17, Krejcirik-Siegl19,Krejcirik-Nguyen22, Nguyen22} using the definition \eqref{Def Pseuspectrum 2} to investigate pseudospectrum of a differential operator.

In this article, we would like to use both two aforementioned definitions to study the pseudospectrum of the free Schr\"{o}dinger operator adding with  a complex steplike potential (see Figure \ref{Fig: Potential}),
\[ \mathscr{L}= -\frac{\dd^2}{\dd x^2}+V(x),\qquad V(x)\coloneqq \left\{\begin{aligned}
&V_{+} &&\text{ for }x\geq 0,\\
&V_{-} &&\text{ for }x< 0,
\end{aligned} \right.\qquad \text{with } V_{+},V_{-}\in \C,\]
and its complex point interaction
\begin{equation}\label{Complex delta operator}
\mathscr{L}_{\alpha}= -\frac{\dd^2}{\dd x^2}+V(x) + \alpha \delta_{0},\qquad \alpha\in \C,
\end{equation}
where $\delta_{0}$ is the Dirac delta generalized function.
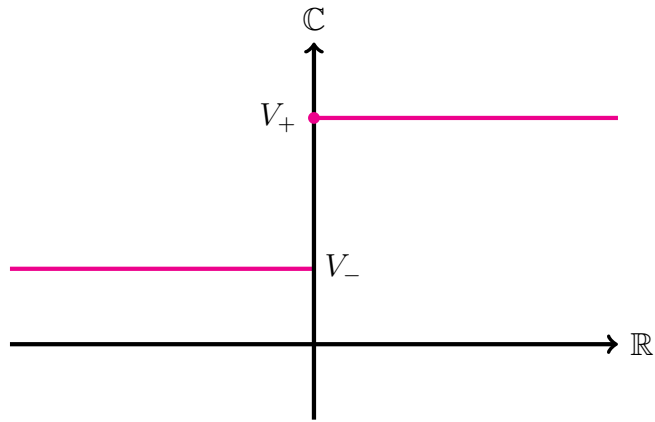
\begin{figure}[h!]
\centering
\begin{tikzpicture}
\draw[-,ultra thick, ultra thick, magenta] (0,3)--(4,3);
\draw[-,ultra thick, ultra thick, magenta] (0,1)--(-4,1);
\draw[->,ultra thick] (-4,0)--(4,0) node[right]{$\R$};
\draw[->,ultra thick] (0,-1)--(0,4) node[above]{$\C$};

\draw (0,3) node[fill,magenta,circle,scale=0.4, label=left:{$V_{+}$}] {};
\draw (0,1) node[fill,magenta,circle,scale =0, label=right:{$V_{-}$}]{};
\end{tikzpicture}
\caption{The complex steplike potential $V(x)$.}\label{Fig: Potential}
\end{figure}

There are three main results in our paper. The first result concerns with the spectra of two operators $\mathscr{L}$ and $\mathscr{L}_{\alpha}$. Theorem \ref{Theo Spectrum of L} and Theorem \ref{Theo Spectrum L delta} will provide explicit answers to the elementary questions:
\begin{enumerate}
\item Depending on $V$, what does the spectrum of $\mathscr{L}$ look like?
\item Depending on $\alpha\in \C$, how does the spectrum of $\mathscr{L}$ change under the affect of complex point interaction?
\end{enumerate}
For the first question, it can be predictable that the spectrum of $\mathscr{L}$ is obtained by shifting the ray $[0,+\infty)$ (the spectrum of the free Schr\"{o}dinger operator) by two vectors $V_{+}$ and $V_{-}$. For the second question, we will see that some eigenvalue will be released when $\alpha$ belongs to a geometric region $\Omega\subset \C$ depending on the difference $V_{+}-V_{-}$ (see Figure \ref{Fig:Region Omega}).

Our second result is related to the resolvents of $\mathscr{L}$ and $\mathscr{L}_{\alpha}$. As above, Theorem \ref{Theo Norm of resolvent} and Theorem \ref{Theo Norm of resolvent delta} will addresses two questions
\begin{enumerate}
\item What is the asymptotic behavior of the norm of the resolvent of $\mathscr{L}$ inside the numerical range?
\item How does the complex point interaction affect this behavior of the resolvent of $\mathscr{L}$?
\end{enumerate}
To answer the first question, we find a new method to obtain an explicit formula for the divergence of the resolvent
 \begin{equation*}
\left\Vert (\mathscr{L}-z)^{-1}\right\Vert = \frac{2|\Im V_{+}-\Im V_{-}|}{|V_{+}-V_{-}|} \frac{\Re z}{|\Im V_{+}-\Im z||\Im V_{-}-\Im z|} \left(1+\mathcal{O}\left( \frac{1}{|\Re z|}\right)\right),
\end{equation*}
as $\Re z\to +\infty$ and uniformly for all $\Im z$ between $\Im V_{+}$ and $\Im V_{-}$. Our finding addresses a concern of Henry and Krej\v{c}i\v{r}\'{i}k \cite{Henry-Krejcirik17}, who wondered whether we could find an optimal constant and a sharp dependence on the distance between the spectral parameter and the spectrum in their specific case $V(x)=i\, \textup{sgn}(x)$. What is more, by applying this method for the operator $\mathscr{L}_{\alpha}$, with $\alpha\in \C\setminus \{0\}$, we can answer the second question, that is
\begin{equation*}
\left\Vert (\mathscr{L}_{\alpha}-z)^{-1}\right\Vert = \frac{|\Im V_{+}-\Im V_{-}|\sqrt{\Re z}}{|\alpha||\Im V_{+}-\Im z||\Im V_{-}-\Im z|}\left(1+\mathcal{O}\left(\frac{1}{|\Re z|^{1/2}}\right)\right).
\end{equation*}
as $\Re z\to +\infty$ and uniformly for all $\Im z$ between $\Im V_{+}$ and $\Im V_{-}$. We see that not only the constant changes, now it depends both on $\Im V_{\pm}$ and $\alpha$, but the divergent rate of the resolvent norm also decreases from $\Re z$ to $\sqrt{\Re z}$.

Our final result is rather interesting and it is related to the pseudomode construction for the operators $\mathscr{L}$ and $\mathscr{L}_{\alpha}$. It is interesting because it gives us a hope that we may describe the pseudospectrum accurately when we do not have the formula for the resolvent of the operator. More precisely, from Theorem \ref{Theo Pseudomode}, we derive an explicit formula for the pseudomode $\Psi_{z}\in \textup{Dom}(\mathscr{L})$ such that
\[ \frac{\Vert(\mathscr{L}-z)\Psi_{z}\Vert}{\Vert \Psi_{z} \Vert} = \frac{|V_{+}-V_{-}|}{2|\Im V_{+}-\Im V_{-}|} \frac{|\Im V_{+}-\Im z||\Im V_{-}-\Im z|}{\Re z} \left(1+\mathcal{O}\left( \frac{1}{|\Re z|}\right)\right) ,\]
as $\Re z\to +\infty$ and uniformly for all $\Im z$ between $\Im V_{+}$ and $\Im V_{-}$. Our finding exceeds the expectation of the concern of Krej\v{c}i\v{r}\'{i}k and Siegl in \cite{Krejcirik-Siegl19}, in which they tried to construct the pseudomode for the Schr\"{o}dinger operator with $V=i \,\text{sgn}(x)$, \emph{i.e.}, $V_{+}=i$ and $V_{-}=-i$, but the best possible for the decaying rate that they could obtain is $\mathcal{O}\left(\frac{1}{z^{1/2}}\right)$ as $z$ goes to $+\infty$ on the real line. Here, our method provides an optimal pseudomode which give us the exact constant $\frac{|V_{+}-V_{-}|}{2|\Im V_{+}-\Im V_{-}|} $, the precise rate $\Re z$ and the correct distance to the spectrum $|\Im V_{+}-\Im z||\Im V_{-}-\Im z|$. In order to verify that our construction method is relevant, we apply it for the model $\mathscr{L}_{\alpha}$ ($\alpha \neq 0$) and it is still applicable and produce an optimal pseudomode in Theorem \ref{Theo Pseudomode delta}.

Although the model with steplike potential is simple, it also has its own application in scattering theory \cite{Grunert13,Teschl15} and in dispersive estimate \cite{D'Ancona-Selberg12}. Also because it is simple, it is often  chosen as a pioneering model to understand other models whose potentials behave asymptotically as a steplike function. We hope that our model may be chosen to provide more information about the pseudospectrum of the Schr\"{o}dinger with complex potentials, which is a trending hot topic recently.
\subsection{General notations}
Let us fix some notations employed throughout the paper.
\begin{enumerate}[label=\textup{(\arabic*)}]
\item We use the following conventions for number sets:
\begin{itemize}
\item As usual, $\R$ is for the real numbers, $\C$ is for complex numbers, $\R_{+}\coloneqq (0,+\infty)$ and $\R_{-}\coloneqq (-\infty,0)$.
\item For $a,b \in \R$ and $a\neq b$, we denote $|(a,b)|$ for the open interval whose boundaries are $a$ and $b$, \emph{i.e.}, $|(a,b)|=(a,b)$ if $a<b$ and $|(a,b)|=(b,a)$ if $a>b$. Similarly, we denote $|[a,b]|=[a,b]$ if $a<b$ and $|[a,b]|=[b,a]$ if $a>b$
\item For an axis starting from a complex number $C$ and running to infinity horizontally in $\C$, we denote by $[C,+\infty)$, \emph{i.e.}, $[C,+\infty)\coloneqq C+[0,+\infty)$.
\item When we write $\langle X, Y \rangle_{\R^2}$ for $X,Y\in \C$, we mean that we consider $X,Y$ as vectors in $\R^2$ and $\langle X, Y \rangle_{\R^2}$ is a real inner product between two vectors $X$ and $Y$ on $\R^2$, that is $\langle X, Y \rangle_{\R^2}=X_{1}X_{2}+Y_{1}Y_{2}$ for $X=X_{1}+iX_{2}$ and $Y=Y_{1}+iY_{2}$.
\item For two real-valued functions $a$ and $b$, we will occasionally write $a\lesssim b$ (respectively, $a\gtrsim b$) instead of $a\leq C b$ (respectively, $a \geq C b$) for an insignificant constant $C>0$. 
\end{itemize}
\item For an indicator function (characteristic function) of a subset $E$ in $\R$, we denote by $\textbf{\textup{1}}_{E}$, \emph{i.e.},  $\textbf{\textup{1}}_{E}(x)$ has value $1$ at points in $E$ and $0$ at points in $\R\setminus E$.
\item The inner product on $L^2(\R)$ is denoted by $\langle \cdot, \cdot \rangle$. We use the symbol $\Vert \cdot \Vert$ for $L^2$-norm of complex-valued functions defined on $\R$ and when we want to consider this norm restricted on $\R_{+}$ (or, on $\R_{-}$), we will use a clear symbol $\Vert \cdot \Vert_{L^2(\R_{+})}$ (respectively, $\Vert \cdot \Vert_{L^2(\R_{-})}$). The norm on Sobolev space $H^1(\R)$ is denoted by $\Vert \cdot \Vert_{H^1}$.
\item For a linear operator $S$, as usual, we employ the notations $\textup{Dom}(S)$, $\textup{Ran}(S)$, $\textup{Ker}(S)$, $\rho(S)$ and $\sigma(S)$ for, respectively, the domain, the range, the kernel, the resolvent set and the spectrum of $S$. When $S$ is a densely defined operator, let us recall here some classes of unbounded operators: 
\begin{itemize}
\item $S$ is called normal if $\textup{Dom}(S)=\textup{Dom}(S^{*})$ and $\Vert S \Vert = \Vert S^{*}\Vert$.
\item $S$ is called self-adjoint if $S=S^{*}$.
\item $S$ is called $\mathcal{T}$-self-adjoint, if $S^{*}=\mathcal{T}S\mathcal{T}$, where $\mathcal{T}$ is the antilinear operator of complex conjugation defined by $\mathcal{T} \psi(x)=\overline{\psi(x)}$.
\item $S$ is called $\mathcal{P}$-self-adjoint if $S^{*}=\mathcal{P}S \mathcal{P}$ with a parity operator $\mathcal{P}$ defined by $(\mathcal{P}\psi)(x)=\psi(-x)$.
\item $S$ is called $\mathcal{PT}$-symmetric if $\left[S,\mathcal{PT} \right]=0$, \emph{i.e.}, $\mathcal{PT}S \subset S \mathcal{PT}$, it means that whenever $f\in \textup{Dom}(S)$, $\mathcal{PT}f$ also belongs to $\textup{Dom}(S)$ and $\mathcal{PT}Sf=S\mathcal{PT}f$.
\end{itemize}
The properties of these kinds of operators can be found in \cite[Subsection 5.2.5]{Krejcirik-Siegl15}, in particular, a recent and interesting research on $\mathcal{T}$-self-adjointness under its modern name \emph{Complex-self-adjointness} can be found in \cite{Camara-Krejcirik23} . We often decompose the spectrum of a closed operator as follows
\[ \sigma(S)=\sigma_{\textup{p}}(S) \cup \sigma_{\textup{r}}(S) \cup \sigma_{\textup{c}}(S),\]
in which
\begin{align*}
&\sigma_{\textup{p}}(S)\coloneqq  \{z\in \C: S-z \text{ is not injective} \},\\
&\sigma_{\textup{r}}(S)\coloneqq  \{z\in \C: S-z \text{ is injective and } \overline{\textup{Ran}(S-z)} \subsetneq \mathcal{H} \},\\
&\sigma_{\textup{c}}(S)\coloneqq  \{z\in \C: S-z \text{ is injective and } \overline{\textup{Ran}(S-z)}=\mathcal{H} \text{ and } \textup{Ran}(S-z)\subsetneq \mathcal{H} \}.
\end{align*}
The set $\sigma_{\textup{p}}(S)$ (respectively, $\sigma_{\textup{r}}(S)$ and $\sigma_{\textup{c}}(S)$) is called the \emph{point spectrum} (respectively, the \emph{residual spectrum} and the \emph{continuous spectrum}) of $S$. For the \emph{essential spectrum}, we use the definitions of various types of the essential spectra defined in \cite[Sec. IX]{Edmunds-Evans18} or \cite[Sec. 5.4]{Krejcirik-Siegl15}: $\sigma_{\textup{ek}}(S)$ for $\textup{k}\in \{1,2,3,4,5\}$. The discrete spectrum of $S$, which is the set of isolated eigenvalues $z$ of $S$ which have finite algebraic multiplicity and such that $\Ran(S-z)$ is closed in $\mathcal{H}$, is labeled by $\sigma_{\textup{dis}}(S)$.
\item By abusing of notation, we shall denote integral operators and their kernels by the same symbol. For example, we will write the integral operator $\mathcal{R}$ as $\mathcal{R}f(x) = \int_{\R} \mathcal{R}(x,y)f(y)\, \dd y$. 
\end{enumerate}
\subsection{Structure of the paper}
The organization of this paper is as follows. Section \ref{Section Main Results} is devoted to all the statements and main results: the definition of the operator $\mathscr{L}$, its resolvent and spectrum, its pseudospectrum and optimal pseudomode, and finally the same achievements for $\mathscr{L}_{\alpha}$. As usual, the remain sections are used to provide proofs or to describe the methods that we have employed, more precisely,
\begin{itemize}
\item In Section \ref{Section Resolvent Spectrum}, the kernel of the resolvent of $\mathscr{L}$ is established and the spectrum of $\mathscr{L}$ is characterized.
\item In Section \ref{Section Resolvent Estimate}, the pseudospectrum of $\mathscr{L}$ is studied by estimating the resolvent norm inside the numerical range.
\item Section \ref{Section Complex point interaction} is used to study the spectral properties of the operator $\mathscr{L}_{\alpha}$. The stability of the essential spectra under the Dirac interaction is proved. Then, the existence of the discrete eigenvalues depending on $\alpha$ is discussed. After the spectrum is clear, the resolvent norm behavior of $\mathscr{L}_{\alpha}$ is determined as the spectral parameter goes to infinity in the region between two essential spectrum lines. Finally, the optimal pseudomode is also constructed for this delta interaction model.
\item Two appendixes \ref{Appendix 1} and \ref{Appendix L a} are employed to define and show all the beautiful properties of the operators $\mathscr{L}$ and $\mathscr{L}_{\alpha}$.
\end{itemize}
\subsection*{Acknowledgement}
I would like to thank Professor David Krej\v{c}i\v{r}\'{i}k for giving me many precious opportunities to continue my research career. To me, he is not only a great mathematician, but he is also a great leader who takes care his people very kindly. I am kindly thankful to Professor Petr Siegl for his comments when I visited him in Graz. This project was supported by the EXPRO grant number 20-17749X of the Czech Science Foundation (GA\v{C}R).
\section{Statements and main Results}\label{Section Main Results}
\subsection{The operator, the resolvent and the spectrum}\label{Subsec Resolvent L}
Let us begin by defining our operator via a sesquilinear form whose formula and domain are given by
\begin{align*}
Q(u,v)&\coloneqq\int_{\R} u'(x) \overline{v'(x)}\, \dd x + V_{+}\int_{0}^{+\infty} u(x) \overline{v(x)}\, \dd x +V_{-}\int_{-\infty}^{0} u(x) \overline{v(x)}\, \dd x,\\
\textup{Dom}(Q)&\coloneqq  H^1(\R).
\end{align*}
Then, the description and some useful properties of the operator are represented in our first proposition. The proof of this proposition is rather elementary, however, we would like to write it down for the convenience of the readers, especially for young researchers like the author. The proof of this proposition can be found in Appendix \ref{Appendix 1}.
\begin{proposition}\label{Prop Property}
There exists a closed densely defined operator $\mathscr{L}$ whose domain is given by
\begin{equation*}
\textup{Dom}\left(\mathscr{L}\right)=\left\{\begin{aligned}
u \in H^1(\R): &\text{ there exists } f\in L^2(\R) \text{ such that }\\
&Q(u,v)=\left\langle f,v \right\rangle \text{ for all } v\in H^1(\R)
\end{aligned} \right\},
\end{equation*}
and
\begin{equation}\label{Laxmilgram eq}
Q(u,v)=\left\langle \mathscr{L}u, v\right\rangle, \qquad \forall u\in \textup{Dom}(\mathscr{L}), \qquad \forall v\in H^1(\R).
\end{equation}
Then, the following holds.
\begin{enumerate}[label=\textbf{\textup{(\arabic*)}}]
\item \label{Nonempty res} The domain and the action $\mathscr{L}$ can be clarified that 
\begin{align*}
\textup{Dom}\left(\mathscr{L}\right) &= H^2(\R),\\
\mathscr{L}u &= -u'' + V(x)u, \qquad \forall u \in H^2(\R),
\end{align*}
and its resolvent set $\rho(\mathscr{L})$ is nonempty.
\item The numerical range of $\mathscr{L}$ is given by \textup{(}see Figure \textup{\ref{Numerical Range}}\textup{)}
\begin{equation}\label{Num Range}
 \textup{Num}(\mathscr{L})=\left\{\left(0,+\infty\right)+sV_{+}+(1-s)V_{-}:\,s\in [0,1]\right\},
\end{equation}
and as a consequence, $\mathscr{L}$ is a $m-$sectorial operator.
\item The adjoint of $\mathscr{L}$ is given by
\begin{equation}\label{Adjoint}
\mathscr{L}^{*}=-\frac{\dd^2}{\dd x^2}+\overline{V(x)}\,,\qquad \textup{Dom}(\mathscr{L}^{*})=H^2(\R),
\end{equation}
and as a consequence,
\begin{enumerate}[label=\textbf{\textup{(\alph*)}}]
\item $\mathscr{L}$ is normal if and only if $\Im V_{+} = \Im V_{-}$;
\item $\mathscr{L}$ is self-adjoint if and only if $\Im V_{+} = \Im V_{-}=0$;
\item $\mathscr{L}$ is always $\mathcal{T}$-self-adjoint;
\item $\mathscr{L}$ is $\mathcal{P}$-self-adjoint if and only if $\Re V_{+}=\Re V_{-}$ and $\Im V_{+}=-\Im V_{-}$;
\item $\mathscr{L}$ is $\mathcal{PT}$-symmetric if and only if $\Re V_{+}=\Re V_{-}$ and $\Im V_{+}=-\Im V_{-}$.
\end{enumerate}
\end{enumerate}
\end{proposition}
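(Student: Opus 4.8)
My plan is to realize $\mathscr{L}$ through the first representation theorem for sectorial sesquilinear forms, so the opening task is to check that $Q$ is densely defined, sectorial and closed on $H^1(\R)$. Density is immediate since $C_c^\infty(\R)\subset H^1(\R)$ is dense in $L^2(\R)$. Splitting $Q=\Re Q+i\,\Im Q$, the real part $\Re Q(u,u)=\Vert u'\Vert^2+\Re V_+\Vert u\Vert^2_{L^2(\R_+)}+\Re V_-\Vert u\Vert^2_{L^2(\R_-)}$ differs from $\Vert u\Vert^2_{H^1}$ by at most a bounded multiple of $\Vert u\Vert^2$, because the two potential terms are controlled by $(\max|\Re V_\pm|)\Vert u\Vert^2$; hence the form norm is equivalent to the $H^1$-norm and completeness of $H^1(\R)$ yields closedness. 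The imaginary part $\Im Q(u,u)=\Im V_+\Vert u\Vert^2_{L^2(\R_+)}+\Im V_-\Vert u\Vert^2_{L^2(\R_-)}$ is likewise dominated by a constant times $\Vert u\Vert^2$, hence by $\Re Q(u,u)$ up to a constant, which gives sectoriality. The representation theorem then produces the closed operator $\mathscr{L}$ with the variational domain stated in the proposition.

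To prove \ref{Nonempty res} I would test the identity $Q(u,v)=\langle f,v\rangle$ against $v\in C_c^\infty(\R)$ to read off $-u''+Vu=f$ in the distributional sense; since $u\in H^1(\R)$ and $V\in L^\infty(\R)$, the function $f-Vu$ lies in $L^2(\R)$, so $u''\in L^2(\R)$ and therefore $u\in H^2(\R)$, with $\mathscr{L}u=-u''+Vu$. The converse inclusion $H^2(\R)\subseteq\textup{Dom}(\mathscr{L})$ follows by integrating $\int u'\overline{v'}$ by parts; the point worth emphasizing is that, because $u'\in H^1(\R)$ is continuous across the origin, no interface term is generated at $x=0$ (this is exactly what will later distinguish the present operator from $\mathscr{L}_\alpha$). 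Nonemptiness of $\rho(\mathscr{L})$ is then free, since a closed sectorial form gives an $m$-sectorial operator whose resolvent set contains the complement of the associated sector.

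For the numerical range I would compute directly: for $u\in H^1(\R)$ with $\Vert u\Vert=1$, writing $a=\Vert u'\Vert^2\ge0$ and $s=\Vert u\Vert^2_{L^2(\R_+)}\in[0,1]$ (so $\Vert u\Vert^2_{L^2(\R_-)}=1-s$) gives $Q(u,u)=a+sV_++(1-s)V_-$, which proves the inclusion $\subseteq$ in \eqref{Num Range}; note $a>0$ strictly, since $a=0$ would force $u$ constant and hence not $L^2$-normalized, which explains the open left endpoint. For the reverse inclusion I would exhibit, for each prescribed $s\in[0,1]$ and each $a>0$, a normalized $u$ realizing these values by concentrating the mass on $\R_+$ or $\R_-$ in the right proportion (the endpoints $s=0,1$ being attained by functions supported on one half-line and vanishing at the origin) and tuning the spread to prescribe $\Vert u'\Vert^2$. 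Since \eqref{Num Range} describes the region swept by the horizontal rays based on the segment $[V_-,V_+]$, it lies in a sector, so $\mathscr{L}$ is $m$-sectorial.

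For part (3) the cleanest route is to observe that the adjoint of a form-operator is the operator of the adjoint form $Q^*(u,v)=\overline{Q(v,u)}$; a one-line conjugation shows $Q^*$ is exactly the form built from $\overline{V}$, whence $\mathscr{L}^*=-\dd^2/\dd x^2+\overline{V(x)}$ on $H^2(\R)$ by the same regularity argument. The five consequences are then direct: (b) and (c) are read off from $V=\overline{V}\iff\Im V_\pm=0$ and from the conjugation identity $\mathcal{T}\mathscr{L}\mathcal{T}=\mathscr{L}^*$; while (d) and (e) both reduce to the single requirement $V(-x)=\overline{V(x)}$, i.e. $V_+=\overline{V_-}$, equivalently $\Re V_+=\Re V_-$ and $\Im V_+=-\Im V_-$. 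The one computation needing care is (a): writing $\mathscr{L}=A+iB$ with $A=-\dd^2/\dd x^2+\Re V$ self-adjoint and $B=\Im V$ real multiplication, one has $\Vert\mathscr{L}u\Vert^2-\Vert\mathscr{L}^*u\Vert^2=\langle(\mathscr{L}^*\mathscr{L}-\mathscr{L}\mathscr{L}^*)u,u\rangle=2i\langle[A,B]u,u\rangle$ with $[A,B]=[-\dd^2/\dd x^2,\Im V]$. When $\Im V_+=\Im V_-$ the function $\Im V$ is constant and this commutator vanishes, giving $\Vert\mathscr{L}u\Vert=\Vert\mathscr{L}^*u\Vert$; when $\Im V_+\neq\Im V_-$ the derivative $(\Im V)'$ is a nonzero multiple of $\delta_0$ and the commutator fails to vanish on a suitable core, so normality breaks down. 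I expect this last dichotomy --- tracking the singular commutator produced by the jump of $\Im V$ --- together with the explicit test functions for the reverse inclusion of the numerical range to be the only genuinely delicate points, the remainder being routine bookkeeping.
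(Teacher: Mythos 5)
Your construction of $\mathscr{L}$ via the first representation theorem (in place of the paper's Lax--Milgram argument applied to a shifted form), your regularity argument for item (1), your explicit two-bump construction for the reverse inclusion in \eqref{Num Range} (the paper instead shows the two rays $V_{\pm}+(0,+\infty)$ lie in $\textup{Num}(\mathscr{L})$ and invokes convexity of the numerical range), and your identification of $\mathscr{L}^{*}$ through the adjoint form (the paper uses that adjoints split across bounded perturbations) are all sound and essentially interchangeable with the paper's proofs; items (b)--(e) are fine as well. The genuine gap is in (3)(a), in the direction ``$\Im V_{+}\neq\Im V_{-}$ implies $\mathscr{L}$ is not normal.''

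Your mechanism is the chain $\Vert\mathscr{L}u\Vert^{2}-\Vert\mathscr{L}^{*}u\Vert^{2}=\langle(\mathscr{L}^{*}\mathscr{L}-\mathscr{L}\mathscr{L}^{*})u,u\rangle=2i\langle[A,B]u,u\rangle$, plus the claim that the commutator pairing is nonzero on a suitable core. The first equality, however, only makes sense for $u\in\textup{Dom}(\mathscr{L}^{*}\mathscr{L})\cap\textup{Dom}(\mathscr{L}\mathscr{L}^{*})$, i.e. for $u\in H^{2}(\R)$ with both $-u''+Vu\in H^{2}(\R)$ and $-u''+\overline{V}u\in H^{2}(\R)$. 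Since $H^{2}(\R)\hookrightarrow C^{1}(\R)$, the first membership forces the jump conditions $u''(0^{+})-u''(0^{-})=(V_{+}-V_{-})u(0)$ and $u'''(0^{+})-u'''(0^{-})=(V_{+}-V_{-})u'(0)$, while the second forces the same conditions with $\overline{V_{+}-V_{-}}$ in place of $V_{+}-V_{-}$; when $\Im V_{+}\neq\Im V_{-}$ these are compatible only if $u(0)=u'(0)=0$. But the direct expansion of the two norms (the paper's \eqref{Normality Eq} followed by integration by parts) gives, for every $u\in H^{2}(\R)$,
\[
\Vert\mathscr{L}u\Vert^{2}-\Vert\mathscr{L}^{*}u\Vert^{2}=4\left(\Im V_{+}-\Im V_{-}\right)\Im\left(u'(0)\overline{u(0)}\right),
\]
which vanishes identically once $u(0)=u'(0)=0$. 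So on exactly the set of functions where your operator-composition identity is legitimate, the quantity you want to be nonzero is identically zero --- and the same happens on $C_{c}^{\infty}(\R)$, since a smooth compactly supported $u$ satisfies $\mathscr{L}u\in H^{2}(\R)$ only if $u(0)=u'(0)=0$. The distribution $[A,B]=-2(\Im V)'\frac{\dd}{\dd x}-(\Im V)''$ is indeed nonzero, but every function you are permitted to pair it against kills it, so the argument as stated cannot conclude.

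The repair is what the paper does: abandon the compositions and prove the displayed identity for \emph{all} $u\in H^{2}(\R)$ by expanding $\Vert\mathscr{L}u\Vert^{2}$ and $\Vert\mathscr{L}^{*}u\Vert^{2}$ directly (no condition at the origin is then imposed), and conclude by exhibiting one witness with $\Im\left(u'(0)\overline{u(0)}\right)\neq0$; the paper takes $u(x)=\frac{1}{\sqrt{x^{2}+1}}+i\frac{x}{x^{2}+1}$, and any $u\in H^{2}(\R)$ with $u(0)=1$, $u'(0)=i$ works. Note that the witness must be genuinely complex-valued --- real-valued $u$ always give $\Im\left(u'(0)\overline{u(0)}\right)=0$ --- so ``the commutator is a nonzero distribution'' is not by itself sufficient; the explicit function (or, alternatively, a proof that $\textup{Dom}(\mathscr{L}^{*}\mathscr{L})\neq\textup{Dom}(\mathscr{L}\mathscr{L}^{*})$, which also certifies non-normality) has to be supplied.
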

\begin{figure}[h!]
\centering
\begin{tikzpicture}
\fill[ fill=magenta!10](-2,1) -- (2,5)--(10,5)--(10,1);

\draw[-,ultra thick, ultra thick, magenta] (-2,1)--(10,1);
\draw[-,ultra thick, ultra thick, magenta] (2,5)--(10,5);
\draw[->,ultra thick] (-2,0)--(10,0) node[right]{$\Re z$};
\draw[->,ultra thick] (0,-1)--(0,7) node[above]{$\Im z$};

\draw (2,5) node[label=above:{$V_{+}$}] {};
\draw (-2,1) node[label=below:{$V_{-}$}]{};
\node[black] at (5,3) {$\textup{Num}(\mathscr{L})$};
\end{tikzpicture}
\caption{Illustration of the numerical range of $\mathscr{L}$ in the magenta color.}\label{Numerical Range}
\end{figure}
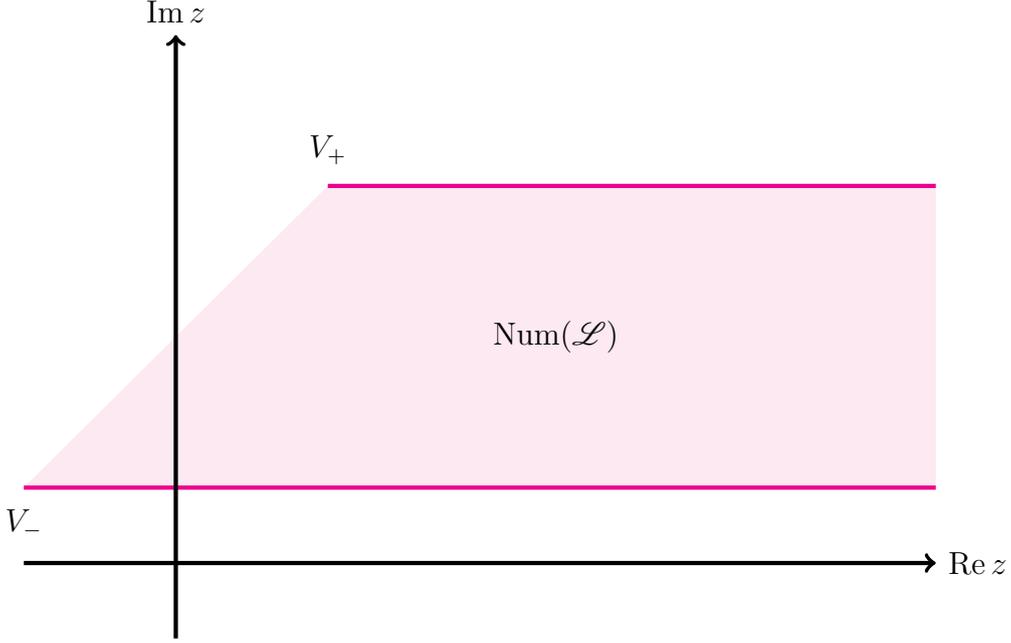
Our next proposition will describe explicitly the resolvent of the operator $\mathscr{L}$. It shows that the resolvent can be written in the integral form.
\begin{proposition}\label{Prop Resolvent}
Let $V_{\pm}\in \C$ and $\mathscr{L}$ be the operator defined as in Proposition \ref{Prop Property}. For all $z \in \C\setminus \left( [V_{+},+\infty)\cup [V_{-},+\infty)\right)$ and for every $f\in L^{2}(\R)$, we have
\begin{equation}\label{Integral Op}
\left[\left(\mathscr{L}-z\right)^{-1} f\right](x) = \int_{\R} \mathcal{R}_{z}(x,y) f(y)\, \dd y ,
\end{equation}
where $\mathcal{R}_{z}(x,y)$ is defined by
\begin{equation*}
\mathcal{R}_{z}(x,y)= \left\{
\begin{aligned}
			 & \frac{1}{2k_{+}(z)}e^{-k_{+}(z)|x-y|}+\frac{k_{+}(z)-k_{-}(z)}{2k_{+}(z)\left(k_{+}(z)+k_{-}(z) \right)}e^{-k_{+}(z)(x+y)} && \text{for } \{ x> 0, y> 0\}; \\
			 &  \frac{1}{2k_{-}(z)}e^{-k_{-}(z)|x-y|}-\frac{k_{+}(z)-k_{-}(z)}{2k_{-}(z)\left(k_{+}(z)+k_{-}(z) \right)}e^{k_{-}(z)(x+y)} && \text{for } \{x < 0, y< 0\};\\ 		 
			 & \frac{1}{k_{+}(z)+k_{-}(z)}e^{-k_{+}(z)x+k_{-}(z)y} && \text{for } \{x> 0, y< 0\}; \\
			 & \frac{1}{k_{+}(z)+k_{-}(z)}e^{k_{-}(z)x-k_{+}(z)y}  && \text{for } \{x< 0,y> 0\}; 
		\end{aligned}
\right.
\end{equation*}
where we set
\begin{equation}\label{kpkn}
k_{+}(z)\coloneqq \sqrt{V_{+}-z},\qquad k_{-}(z)\coloneqq \sqrt{V_{-}-z}.
\end{equation}
\end{proposition}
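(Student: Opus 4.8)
The plan is to construct the resolvent kernel directly by solving the associated boundary value problem and then verify that the resulting integral operator inverts $\mathscr{L}-z$. Since the potential is piecewise constant, the equation $(\mathscr{L}-z)u = f$ reads $-u'' + (V_{\pm}-z)u = f$ on each half-line, so the natural building blocks are the exponentials $e^{\pm k_{\pm}(z)x}$, where $k_{\pm}(z)=\sqrt{V_{\pm}-z}$. First I would fix the branch of the square root so that $\Re k_{\pm}(z)>0$ for $z$ in the stated region $\C\setminus([V_{+},+\infty)\cup[V_{-},+\infty))$; this is exactly the set where $V_{\pm}-z$ avoids the negative real axis, guaranteeing that the branch cut is not crossed and that $e^{-k_{+}(z)x}$ (resp. $e^{k_{-}(z)x}$) is the $L^2$-decaying solution on $\R_{+}$ (resp. $\R_{-}$). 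The decay condition at $\pm\infty$ forces the Green's function to be built from these decaying exponentials, and the continuous-spectrum structure predicted in Proposition \ref{Prop Property} corresponds precisely to the excluded rays.

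Next I would impose the interface conditions at $x=0$ that encode membership in $\textup{Dom}(\mathscr{L})=H^2(\R)$: continuity of the kernel and of its first $x$-derivative across $x=0$, together with the jump of size $1$ in the $y$-derivative at the diagonal $x=y$ that produces the delta source $-\partial_{xx}\mathcal{R}_z = \delta(x-y)+\dots$. Concretely, on each of the four quadrants $\{x\gtrless 0,\, y\gtrless 0\}$ the kernel is a linear combination of the admissible decaying exponentials in $x$ paired with the appropriate solution in $y$; matching continuity and the derivative jump at $x=y$ on the diagonal quadrants, and continuity together with $C^1$-matching at $x=0$ on the off-diagonal quadrants, yields a small linear system whose solution gives the coefficients $\tfrac{1}{2k_{+}}$, $\tfrac{1}{2k_{-}}$, the reflection term $\tfrac{k_{+}-k_{-}}{2k_{+}(k_{+}+k_{-})}$, and the transmission term $\tfrac{1}{k_{+}+k_{-}}$. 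The denominators $k_{+}+k_{-}$ never vanish on the resolvent region since $\Re k_{\pm}>0$, so the construction is well defined.

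Having written down the candidate kernel, the cleaner route to a rigorous proof is verification rather than derivation: I would define the integral operator $\mathcal{R}$ by \eqref{Integral Op} and check that for every $f\in L^2(\R)$ the function $u=\mathcal{R}f$ lies in $H^2(\R)=\textup{Dom}(\mathscr{L})$ and satisfies $(\mathscr{L}-z)u=f$, and conversely that $\mathcal{R}(\mathscr{L}-z)u=u$. The $H^2$-membership and the matching of the delta singularity can be confirmed by splitting the integral at $y=x$ and differentiating under the integral sign, using the continuity and derivative-jump relations established above; because $\Re k_{\pm}>0$ the kernel decays exponentially in both variables, so all integrals converge and $\mathcal{R}$ is bounded on $L^2(\R)$, which also re-proves that $\rho(\mathscr{L})$ contains the stated region. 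I expect the main obstacle to be bookkeeping rather than conceptual: keeping the four quadrant formulas, the two branch choices for $k_{\pm}$, and the two sets of interface conditions (at the diagonal and at $x=0$) consistent, and in particular verifying that the off-diagonal transmission formulas glue correctly to the diagonal reflection formulas so that $\mathcal{R}_z$ is genuinely continuous across $x=0$ uniformly in $y$. Once the symmetry $\mathcal{R}_z(x,y)=\mathcal{R}_z(y,x)$ (reflecting the $\mathcal{T}$-self-adjointness from Proposition \ref{Prop Property}) is used as a consistency check, the verification reduces to a finite number of elementary exponential identities.
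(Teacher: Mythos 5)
Your proposal is correct, but it takes a genuinely different route from the paper's. The paper does not posit the kernel and then verify it; it derives it rigorously: for arbitrary $f\in L^2(\R)$ the equation $-u_{\pm}''+(V_{\pm}-z)u_{\pm}=f$ is solved on each half-line by variation of parameters, producing four undetermined constants; the decay conditions at $\pm\infty$ (justified for general $L^2$ data by a density and dominated-convergence argument) fix two of them, the $C^1$ matching at $x=0$ fixes the other two, and the kernel is then read off, with boundedness of the resulting integral operator obtained from the Schur test and $H^2$-membership from $u''=(V-z)u-f\in L^2(\R)$. Since that derivation shows any $H^2$ solution is necessarily of this form, injectivity and surjectivity of $\mathscr{L}-z$ come out simultaneously, with no separate verification step. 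Your ansatz-plus-verification scheme --- build the Green's function from the homogeneous solutions via the diagonal jump condition, the interface conditions at $x=0$ and decay, then check that the integral operator is a two-sided inverse --- is equally legitimate, and you were right to insist on both identities $(\mathscr{L}-z)\mathcal{R}f=f$ and $\mathcal{R}(\mathscr{L}-z)u=u$: since this proposition is precisely what establishes that the stated region lies in $\rho(\mathscr{L})$, a one-sided inverse would not suffice. What your route buys is that the kernel derivation reduces to a finite linear system and the analytic work is confined to differentiation under the integral sign; what it costs is that the delta-matching heuristic carries no logical weight, so all rigor rests on the verification, and in particular the boundedness of $\mathcal{R}$ on $L^2(\R)$ deserves a named argument rather than an appeal to exponential decay --- either the Schur test as in the paper, or Young's inequality for the convolution terms $e^{-k_{\pm}(z)|x-y|}$ together with the observation that the reflection and transmission pieces are rank-one operators with finite Hilbert--Schmidt norm.
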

Here and throughout the article, we choose the principle value of the square root, \emph{i.e.}, $z\mapsto\sqrt{z}$ defined on $\C$ which is holomorphic on $\C\setminus (-\infty,0]$ and positive on $(0,+\infty)$.
\begin{remark}
When $V_{+}=V_{-}=v\in \C$, we obtain a simple formula for the resolvent kernel of the Schr\"{o}dinger operator $\mathscr{L}$:
\begin{equation}
\mathcal{R}_{z}(x,y)= \frac{1}{2\sqrt{v-z}} e^{-\sqrt{v-z}|x-y|},\qquad\text{ for almost everywhere } (x,y)\in \R^2.
\end{equation}
Obviously, the resolvent of the Schr\"{o}dinger operator with steplike potential has more terms than the resolvent of the free Schr\"{o}dinger operator, \emph{i.e.}, $V_{+}=V_{-}=0$. Besides the the exponential terms with the difference $x-y$, it also contains the exponential terms with $x$ and $y$ which can be separable. We will see that the latter will play the main role of the blowing-up of the resolvent inside the numerical range which makes the pseudospectrum highly non-trivial (see subsection \ref{Subsec Resolvent inside N}).
\end{remark}
By using Weyl sequence, we can show that the set $[V_{+},+\infty)\cup [V_{-},+\infty)$ is indeed the spectrum of $\mathscr{L}$ whose characterization is described explicitly in the following theorem.
\begin{theorem}\label{Theo Spectrum of L}
Let $V_{\pm}\in \C$ and $\mathscr{L}$ be the operator defined as in Proposition \ref{Prop Property}. The spectrum of $\mathscr{L}$ is given by \textup{(}see Figure \textup{\ref{Fig Spectrum L}}\textup{)}
\begin{equation}\label{Spectrum of L}
\sigma \left(\mathscr{L}\right)= [V_{+},+\infty)\cup [V_{-},+\infty).
\end{equation}
Furthermore, the spectrum of $\mathscr{L}$ is purely continuous, \emph{i.e.},
\[ \sigma_{\textup{p}}(\mathscr{L})=\emptyset,\qquad \sigma_{\textup{r}}(\mathscr{L})=\emptyset,\qquad \sigma_{\textup{c}}(\mathscr{L}_{m})=[V_{+},+\infty)\cup [V_{-},+\infty), \]
and all its essential spectra are identical:
\[ \sigma_{\textup{e1}}(\mathscr{L})=\sigma_{\textup{e2}}(\mathscr{L})=\sigma_{\textup{e3}}(\mathscr{L})=\sigma_{\textup{e4}}(\mathscr{L})=\sigma_{\textup{e5}}(\mathscr{L})=[V_{+},+\infty)\cup [V_{-},+\infty).\]
\end{theorem}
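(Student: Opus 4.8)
The plan is to establish the two inclusions separately, exploiting the resolvent formula from Proposition \ref{Prop Resolvent} for one direction and a Weyl-sequence construction for the other. First I would prove that $\C \setminus \left([V_{+},+\infty)\cup [V_{-},+\infty)\right) \subset \rho(\mathscr{L})$. This follows almost immediately from Proposition \ref{Prop Resolvent}: for every such $z$ the integral kernel $\mathcal{R}_{z}(x,y)$ is well-defined, and the key is to check that the associated integral operator is bounded on $L^2(\R)$. Since we have chosen the principal branch of the square root, for $z$ outside the two rays we have $\Re k_{\pm}(z)>0$, so all the exponential factors decay. I would verify boundedness via a Schur test, estimating $\sup_{x}\int_{\R}|\mathcal{R}_{z}(x,y)|\,\dd y$ and $\sup_{y}\int_{\R}|\mathcal{R}_{z}(x,y)|\,\dd x$; the diagonal-type terms $e^{-k_{\pm}|x-y|}$ and the separable terms $e^{-k_{+}(x+y)}$ all integrate to finite quantities controlled by $1/\Re k_{\pm}$. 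This shows $\mathscr{L}-z$ is boundedly invertible, hence $z\in\rho(\mathscr{L})$.

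Next I would prove the reverse inclusion $[V_{+},+\infty)\cup [V_{-},+\infty) \subset \sigma(\mathscr{L})$ by constructing, for each $\lambda$ in either ray, a \emph{Weyl sequence}: a sequence $(u_{n})\subset \textup{Dom}(\mathscr{L})=H^2(\R)$ with $\Vert u_{n}\Vert=1$ and $\Vert(\mathscr{L}-\lambda)u_{n}\Vert\to 0$. For $\lambda\in[V_{+},+\infty)$, write $\lambda=V_{+}+\xi^2$ with $\xi\geq 0$; on the half-line $x\geq 0$ the equation $-u''+V_{+}u=\lambda u$ has the oscillatory solution $e^{i\xi x}$, so I would take $u_{n}(x)=c_{n}\,\chi_{n}(x)\,e^{i\xi x}$ where $\chi_{n}$ is a smooth cutoff supported in a window like $[n,2n]$ (pushed to $+\infty$ to avoid the interface at $0$) and $c_{n}$ normalizes the $L^2$-norm. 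The commutator terms from differentiating the cutoff scale like $1/n$ relative to the mass of $u_{n}$, forcing $\Vert(\mathscr{L}-\lambda)u_{n}\Vert\to 0$; the case $\lambda\in[V_{-},+\infty)$ is symmetric using the left half-line. This establishes \eqref{Spectrum of L}.

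For the finer decomposition, I would first rule out the point spectrum: $\sigma_{\textup{p}}(\mathscr{L})=\emptyset$. Since the essential spectrum fills the two rays, any eigenvalue would have to lie on them; but on each ray the equation $-u''+Vu=\lambda u$ has only non-$L^2$ (bounded oscillatory or linearly growing) solutions, so no $L^2$ eigenfunction exists, and off the rays we are in $\rho(\mathscr{L})$. For the residual spectrum, I would invoke the $\mathcal{T}$-self-adjointness from Proposition \ref{Prop Property}\ref{Nonempty res} (item (c)): because $\mathscr{L}^{*}=\mathcal{T}\mathscr{L}\mathcal{T}$, the operator $\mathscr{L}-z$ is injective if and only if $(\mathscr{L}-\overline{z})^{*}$ is, and a standard argument then shows $\sigma_{\textup{r}}(\mathscr{L})=\emptyset$, since a residual point would force $\overline{z}$ into $\sigma_{\textup{p}}(\mathscr{L}^{*})=\sigma_{\textup{p}}(\mathscr{L})$, which is empty. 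Hence the spectrum is purely continuous.

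Finally, for the coincidence of all five essential spectra I would use the general inclusion chain $\sigma_{\textup{e1}}(\mathscr{L})\subseteq\cdots\subseteq\sigma_{\textup{e5}}(\mathscr{L})$ valid for any closed operator, together with the fact that $\sigma_{\textup{e5}}$ coincides with $\sigma$ minus the discrete spectrum. Since $\sigma_{\textup{dis}}(\mathscr{L})=\emptyset$ (no isolated eigenvalues of finite multiplicity, as there are no eigenvalues at all), $\sigma_{\textup{e5}}(\mathscr{L})=\sigma(\mathscr{L})=[V_{+},+\infty)\cup[V_{-},+\infty)$; and the Weyl sequences already constructed are singular sequences in the sense of $\sigma_{\textup{e1}}$, forcing the whole chain to collapse to the same set. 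I expect the \textbf{main obstacle} to be the Weyl-sequence estimate: one must choose the cutoff scale and location carefully so that both the normalization is controlled \emph{and} the interface at the origin does not interfere, and then track the error terms produced by the cutoff derivatives to confirm they vanish in the limit. The residual-spectrum argument is the secondary delicate point, since it requires correctly transferring information between $\mathscr{L}$ and its adjoint via $\mathcal{T}$-self-adjointness rather than ordinary self-adjointness.
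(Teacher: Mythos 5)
Your architecture is essentially the paper's: the inclusion $\sigma(\mathscr{L})\subset[V_{+},+\infty)\cup[V_{-},+\infty)$ is read off from Proposition \ref{Prop Resolvent} (whose proof is precisely where the Schur test lives), the reverse inclusion comes from Weyl sequences built from $e^{i\xi x}$ times cutoffs whose supports escape to $\pm\infty$, eigenvalues are excluded by analyzing the ODE on the half-line where the solutions oscillate, and $\sigma_{\textup{r}}(\mathscr{L})=\emptyset$ follows from $\mathcal{T}$-self-adjointness exactly as you argue; this is the paper's proof in all but minor details. One small point to make your point-spectrum step airtight: for $\lambda\in[V_{+},+\infty)\setminus[V_{-},+\infty)$ the equation does admit a decaying exponential on $\R_{-}$, so the correct phrasing is that $u\in L^{2}(\R_{+})$ forces $u\equiv 0$ on $\R_{+}$, hence $u(0)=u'(0)=0$, and uniqueness for the second-order ODE then annihilates $u$ on $\R_{-}$ as well.

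The genuine gap is in the last step, the collapse of the five essential spectra. You claim the Weyl sequences are \enquote{singular sequences in the sense of $\sigma_{\textup{e1}}$} and that this collapses the whole chain. But the singular-sequence (Weyl) criterion characterizes $\sigma_{\textup{e2}}$, not $\sigma_{\textup{e1}}$: by \cite[Theo. IX.1.3]{Edmunds-Evans18}, existence of a normalized, weakly null sequence with $\Vert(\mathscr{L}-z)u_{n}\Vert\to 0$ is equivalent to $z\in\sigma_{\textup{e2}}(\mathscr{L})$, whereas $\sigma_{\textup{e1}}$ (the non-semi-Fredholm set) can be \emph{strictly smaller} than $\sigma_{\textup{e2}}$ for a general closed operator (an operator with closed range, infinite-dimensional kernel and finite-dimensional cokernel is semi-Fredholm but fails the $\sigma_{\textup{e2}}$ condition). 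So your argument yields rays $\subset\sigma_{\textup{e2}}\subset\sigma_{\textup{e3}}\subset\sigma_{\textup{e4}}\subset\sigma_{\textup{e5}}\subset\sigma(\mathscr{L})=$ rays, which settles $\sigma_{\textup{e2}}$ through $\sigma_{\textup{e5}}$ (and your identification $\sigma_{\textup{e5}}(\mathscr{L})=\sigma(\mathscr{L})\setminus\sigma_{\textup{dis}}(\mathscr{L})$, \emph{i.e.}, \cite[Prop. 5.4.3]{Krejcirik-Siegl15}, is a legitimate alternative to the paper's connectedness argument for $\sigma_{\textup{e5}}$), but it leaves $\sigma_{\textup{e1}}$ unresolved. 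Two one-line repairs: either invoke the paper's route, namely that for $\mathcal{T}$-self-adjoint operators $\sigma_{\textup{e1}}=\sigma_{\textup{e2}}=\sigma_{\textup{e3}}=\sigma_{\textup{e4}}$ by \cite[Theo. IX.1.6]{Edmunds-Evans18}; or exploit what you have already proved, namely $\sigma_{\textup{p}}(\mathscr{L})=\emptyset$: every $\mathscr{L}-z$ is injective, and an injective closed operator is semi-Fredholm if and only if its range is closed, which is exactly the condition defining $\sigma_{\textup{e2}}$ in the injective case, so $\sigma_{\textup{e1}}(\mathscr{L})=\sigma_{\textup{e2}}(\mathscr{L})$ for this particular operator.
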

In view of Theorem \ref{Theo Spectrum of L}, it can be seen that all the essential spectra are identical even when $\mathscr{L}$ is not necessarily self-adjoint. When $V_{+}=V_{-}=0$, the well-known result on the spectrum of the free Schr\"{o}dinger operator, which is classically attained from the positiveness of $-\frac{\dd^2}{\dd x^2}$ and the existence of the Weyl sequence (\emph{approximate eigenfunctions}), is recovered
\[ \sigma(\mathscr{L})=[0,+\infty).\]
When $\Im V_{+} = \Im V_{-}$, the spectrum of $\mathscr{L}$ is restricted to the axis
\begin{equation}\label{V hat}
 \sigma(\mathscr{L})=[\widehat{V},+\infty),\qquad \widehat{V}\coloneqq\min(\Re V_{+},\Re V_{-})+i \Im V_{+},
\end{equation}
which is as same as the spectrum of the free Schr\"{o}dinger operator translated by a constant potential $\widehat{V}$. When $V(x)=i\, \textup{sgn}(x)$, we attain the result given in \cite[Proposition 2.1]{Henry-Krejcirik17}. 
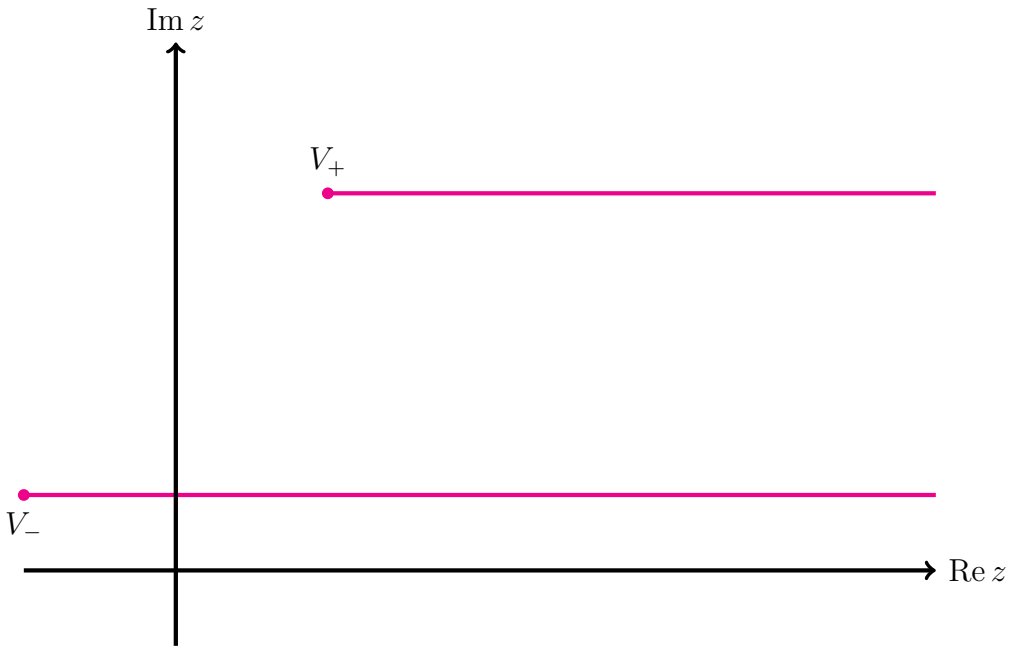
\begin{figure}[h!]
\centering
\begin{tikzpicture}

\draw[-,ultra thick, ultra thick, magenta] (-2,1)--(10,1);
\draw[-,ultra thick, ultra thick, magenta] (2,5)--(10,5);
\draw[->,ultra thick] (-2,0)--(10,0) node[right]{$\Re z$};
\draw[->,ultra thick] (0,-1)--(0,7) node[above]{$\Im z$};

\draw (2,5) node[fill,magenta,circle,scale=0.4, label=above:{$V_{+}$}] {};
\draw (-2,1) node[fill,magenta,circle,scale =0.4, label=below:{$V_{-}$}]{};

\end{tikzpicture}
\caption{The spectrum of $\mathscr{L}$ is expressed by the magenta lines whose starting points are $V_{+}$ and $V_{-}$.}\label{Fig Spectrum L}
\end{figure}
\subsection{The pseudospectrum}
Since the free Schr\"{o}dinger operator is self-adjoint, it is deduced from \eqref{Def Pseuspectrum 1} and \eqref{Norm equal 1/d} that
\[ \sigma_{\varepsilon}\left(-\frac{\dd^2}{\dd x^2}\right)=\left\{z\in \C: \textup{dist}\left(z,\sigma\left(-\frac{\dd^2}{\dd x^2}\right)\right)<\varepsilon \right\}.\]
Therefore, its pseudospectrum is trivial. Here, we call a pseudospectrum of a linear operator $S$ is trivial if there exists $C>0$ such that, for all $\varepsilon>0$, we have
\[ \sigma_{\varepsilon}(S) \subset \{z\in \C: \textup{dist}(z,\sigma(S))< C\varepsilon \}.\]
By adding with a steplike potential, which is a bounded perturbation, it is natural to wonder if the pseudospectrum is trivial or not. At the end of this subsection, we will show that this pseudospectrum is trivial if and only if $\Im V_{+}=\Im V_{-}$, \emph{i.e.}, $\Im V(x)$ is a constant. The following proposition is about the resolvent norm outside the numerical range which is a direct consequence of the operator theory.
\begin{proposition}\label{Prop Resolvent out}
Let $V_{\pm}\in \C$ and $\mathscr{L}$ be the operator defined as in Proposition \ref{Prop Property}. For all $z\in \C \setminus \overline{\textup{Num}(\mathscr{L})}$, we have
\begin{equation}\label{Resolvent out}
\frac{1}{\textup{dist}(z,\sigma(\mathscr{L}))} \leq \Vert (\mathscr{L}-z)^{-1} \Vert \leq \frac{1}{\textup{dis}(z,\textup{Num}(\mathscr{L}))}.
\end{equation}
As a consequence, we have
\begin{equation}\label{Resolvent out 2}
\Vert (\mathscr{L}-z)^{-1} \Vert=\frac{1}{\textup{dist}(z,\sigma(\mathscr{L}))},
\end{equation}
for all $z\in W(\mathscr{L})\coloneqq \left\{z\in \C \setminus \overline{\textup{Num}(\mathscr{L})}: \textup{dist}(z,\sigma(\mathscr{L}))= \textup{dist}(z,\textup{Num}(\mathscr{L}))\right\}$.
\end{proposition}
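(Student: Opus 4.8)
The plan is to establish the two inequalities in \eqref{Resolvent out} separately and then obtain \eqref{Resolvent out 2} by a squeezing argument on the set $W(\mathscr{L})$. Both bounds are instances of general operator theory, and the only input specific to our model is Theorem \ref{Theo Spectrum of L}, which guarantees that $\sigma(\mathscr{L})=[V_{+},+\infty)\cup[V_{-},+\infty)\subset \overline{\textup{Num}(\mathscr{L})}$; in particular every $z\in\C\setminus\overline{\textup{Num}(\mathscr{L})}$ lies in the resolvent set $\rho(\mathscr{L})$, so that $(\mathscr{L}-z)^{-1}$ is a genuine bounded operator throughout the argument.

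For the lower bound I would argue that the resolvent norm always controls the distance to the spectrum. Fix $z\in\rho(\mathscr{L})$ and write, for $w\in\C$,
\[ \mathscr{L}-w=(\mathscr{L}-z)\bigl(\Id-(w-z)(\mathscr{L}-z)^{-1}\bigr).\]
Whenever $|w-z|\,\Vert(\mathscr{L}-z)^{-1}\Vert<1$ the factor in brackets is invertible by a Neumann series, so $w\in\rho(\mathscr{L})$. Hence the open disc of radius $\Vert(\mathscr{L}-z)^{-1}\Vert^{-1}$ centred at $z$ is contained in $\rho(\mathscr{L})$, which forces $\textup{dist}(z,\sigma(\mathscr{L}))\geq \Vert(\mathscr{L}-z)^{-1}\Vert^{-1}$, i.e. the left inequality in \eqref{Resolvent out}. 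This step uses nothing about the numerical range and holds for every $z\in\rho(\mathscr{L})$.

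For the upper bound I would exploit $m$-sectoriality through the numerical range. For $u\in\textup{Dom}(\mathscr{L})=H^2(\R)$ with $u\neq 0$, the quotient $\langle \mathscr{L}u,u\rangle/\Vert u\Vert^2$ belongs to $\textup{Num}(\mathscr{L})$, so
\[ \left| \langle(\mathscr{L}-z)u,u\rangle \right| = \left| \frac{\langle \mathscr{L}u,u\rangle}{\Vert u\Vert^2}-z\right|\,\Vert u\Vert^{2} \geq \textup{dist}(z,\textup{Num}(\mathscr{L}))\,\Vert u\Vert^{2}.\]
Combining this with the Cauchy--Schwarz inequality $\left|\langle(\mathscr{L}-z)u,u\rangle\right|\leq \Vert(\mathscr{L}-z)u\Vert\,\Vert u\Vert$ yields $\Vert(\mathscr{L}-z)u\Vert\geq \textup{dist}(z,\textup{Num}(\mathscr{L}))\,\Vert u\Vert$. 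Applying this with $u=(\mathscr{L}-z)^{-1}f$ for arbitrary $f\in L^2(\R)$ (legitimate since $z\in\rho(\mathscr{L})$) gives $\Vert f\Vert\geq \textup{dist}(z,\textup{Num}(\mathscr{L}))\,\Vert(\mathscr{L}-z)^{-1}f\Vert$, which is precisely the right inequality in \eqref{Resolvent out}.

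Finally, for $z\in W(\mathscr{L})$ one has $\textup{dist}(z,\sigma(\mathscr{L}))=\textup{dist}(z,\textup{Num}(\mathscr{L}))$ by definition, so the two outer terms in \eqref{Resolvent out} coincide and squeeze $\Vert(\mathscr{L}-z)^{-1}\Vert$ to the common value, giving \eqref{Resolvent out 2}. The argument is entirely routine; the only point requiring care---and the one place where the structure of this particular operator enters---is the opening remark, namely that the whole region $\C\setminus\overline{\textup{Num}(\mathscr{L})}$ sits inside $\rho(\mathscr{L})$. Here I would rely on Theorem \ref{Theo Spectrum of L} rather than on an abstract inclusion $\sigma(\mathscr{L})\subset\overline{\textup{Num}(\mathscr{L})}$; the latter would itself require checking that each connected component of $\C\setminus\overline{\textup{Num}(\mathscr{L})}$ meets $\rho(\mathscr{L})$, which is automatic here since the convex half-strip $\textup{Num}(\mathscr{L})$ of \eqref{Num Range} has connected complement.
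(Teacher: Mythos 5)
Your proof is correct and non-circular, but it takes a different route from the paper, whose entire proof is a citation: the lower bound in \eqref{Resolvent out} is quoted from \cite[Theorem 1.2.10]{Davies07} and the upper bound from \cite[Lemma 9.3.14]{Davies07}, with the squeeze on $W(\mathscr{L})$ left implicit. You instead prove both facts from scratch --- the Neumann-series argument for the lower bound and the numerical-range plus Cauchy--Schwarz argument for the upper bound are precisely the standard proofs of the cited results --- so your write-up is a self-contained substitute for the citations, and it also spells out the final squeezing step. The genuine difference lies in how the inclusion $\C\setminus\overline{\textup{Num}(\mathscr{L})}\subset\rho(\mathscr{L})$ is secured: Davies' lemma obtains it abstractly, by requiring each connected component of $\C\setminus\overline{\textup{Num}(\mathscr{L})}$ to contain a point of $\rho(\mathscr{L})$, whereas you import the explicitly computed spectrum of Theorem \ref{Theo Spectrum of L} and note that $\sigma(\mathscr{L})=[V_{+},+\infty)\cup[V_{-},+\infty)\subset\overline{\textup{Num}(\mathscr{L})}$; this is legitimate and creates no circularity, since Theorem \ref{Theo Spectrum of L} is proved (via the resolvent kernel and Weyl sequences) independently of this proposition. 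Your route is cheaper given what the paper already knows; the abstract route buys generality for operators whose spectrum is not explicitly available. One caveat on your closing remark: connectedness of $\C\setminus\overline{\textup{Num}(\mathscr{L})}$ alone does not make the abstract hypothesis ``automatic'' --- one still needs the single component to contain a resolvent point, which here is supplied by $\min\{\Re V_{+},\Re V_{-}\}-1\in\rho(\mathscr{L})$ (from the Lax--Milgram construction in Appendix \ref{Appendix 1}), whose real part is strictly smaller than that of every point of $\textup{Num}(\mathscr{L})$.
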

\begin{proof}
The first inequality in \eqref{Resolvent out} comes from \cite[Theorem 1.2.10]{Davies07} and the second one comes from \cite[Lemma 9.3.14]{Davies07}.
\end{proof}
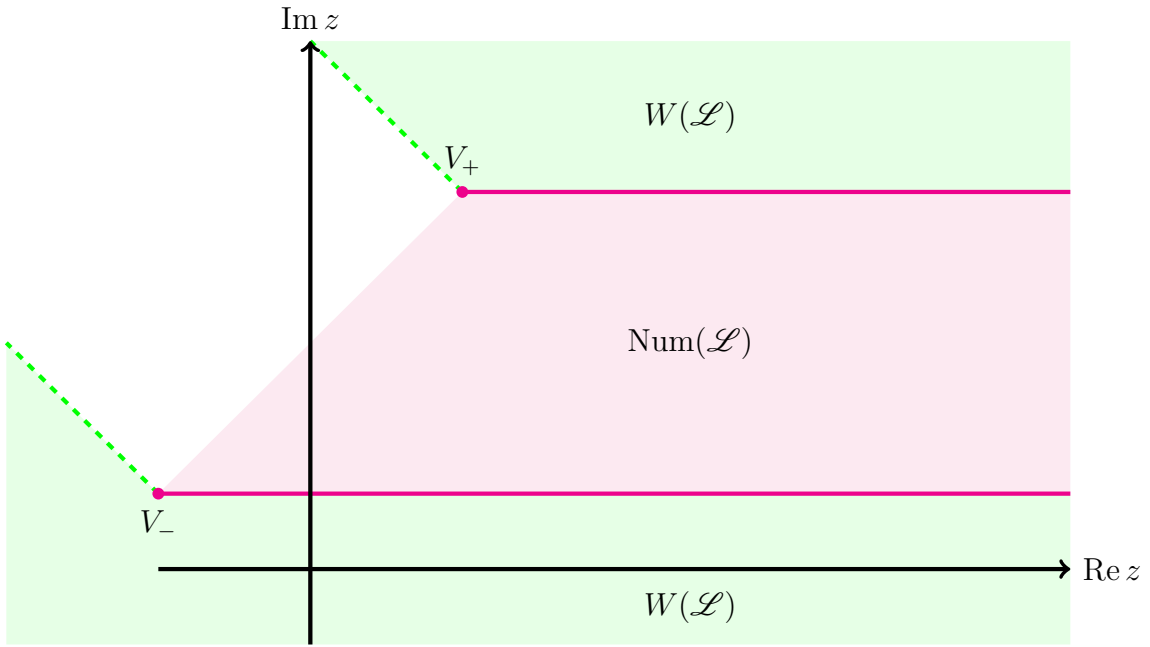
\begin{figure}[h!]
\centering
\begin{tikzpicture}
\fill[ fill=magenta!10](-2,1) -- (2,5)--(10,5)--(10,1);
\fill[ fill=green!10](2,5)--(10,5)--(10,7)--(0,7);
\fill[ fill=green!10](-2,1)--(10,1)--(10,-1)--(-4,-1)--(-4,3);

\draw[-,ultra thick, ultra thick, magenta] (2,5)--(10,5);
\draw[-,ultra thick, ultra thick, magenta] (-2,1)--(10,1);

\draw[-,dashed, ultra thick, green!100] (2,5)--(0,7);
\draw[-,dashed, ultra thick, green!100] (-2,1)--(-4,3);
\draw[->,ultra thick] (-2,0)--(10,0) node[right]{$\Re z$};
\draw[->,ultra thick] (0,-1)--(0,7) node[above]{$\Im z$};

\draw (2,5) node[fill,magenta,circle,scale=0.4, label=above:{$V_{+}$}] {};
\draw (-2,1) node[fill,magenta,circle,scale =0.4, label=below:{$V_{-}$}]{};
\node[black] at (5,6) {$W(\mathscr{L})$};
\node[black] at (5,-0.5) {$W(\mathscr{L})$};
\node[black] at (5,3) {$\textup{Num}(\mathscr{L})$};
\end{tikzpicture}
\caption{The region $W(\mathscr{L})$ is described in the green color, in which two dashed lines are perpendicular to line segment $V_{+}V_{-}$ at $V_{+}$ and $V_{-}$.}\label{Fig: W(L)}
\end{figure}
\begin{remark}
In Figure \ref{Fig: W(L)}, we give a description of the set $W(\mathscr{L})$ in Proposition \ref{Prop Resolvent out} in which the norm of the resolvent behaves like in the case of the normal operator. However, $W(\mathscr{L})$ is not the largest region in $\C$ such that the equality \eqref{Resolvent out 2} happens. To see this, for simplicity, let us consider a simple case $V_{+}=i$ and $V_{-}=-i$ \textup{(}see Figure \textup{\ref{Fig: Out W(L)}}\textup{)} and thanks to the integration by parts, we have, for all $u\in H^2(\R)$,
\begin{align*}
\Vert (\mathscr{L}-z)u \Vert^2=& |i-z|^2 \Vert u\Vert_{L^2(\R_{+})}^2+|-i-z|^2 \Vert u\Vert_{L^2(\R_{-})}^2+\Vert u'' \Vert^2-2\Re z \Vert u' \Vert^2 +4 \Im \left(u'(0)\overline{u(0)}\right).
\end{align*}
Notice that $|-i-z|^2-|i-z|^2 = 4 \Im z$ and we write
\begin{align*}
\Vert (\mathscr{L}-z)u \Vert^2=|i-z|^2 \Vert u\Vert^2 + 4 \Im z \Vert u\Vert_{L^2(\R_{-})}^2+\Vert u'' \Vert^2-2\Re z \Vert u' \Vert^2 +4 \Im \left(u'(0)\overline{u(0)}\right) 
\end{align*}
Assume that $\Re z<0$, by using the inequalities, see for instance \eqref{Inequality}, 
\[|u(0)|^2\leq 2 \Vert u \Vert_{L^2(\R_{-})}\Vert u' \Vert_{L^2(\R_{-})} \text{ and } |u'(0)|^2\leq 2 \Vert u' \Vert_{L^2(\R_{+})}\Vert u'' \Vert_{L^2(\R_{+})} \]
and AM-GM inequality for $4$ numbers, we have
\begin{align*}
\left| 4\Im \left(u'(0)\overline{u(0)}\right) \right|\leq &8 \Vert u'' \Vert_{L^2(\R_{+})}^{1/2} \Vert u' \Vert_{L^2(\R_{+})}^{1/2} \Vert u' \Vert_{L^2(\R_{-})}^{1/2}\Vert u \Vert_{L^2(\R_{-})}^{1/2}\\
\leq &\Vert u'' \Vert_{L^2(\R_{+})}^{2}-2 \Re z \Vert u' \Vert_{L^2(\R_{+})} -2 \Re z \Vert u' \Vert_{L^2(\R_{-})} + \frac{4}{(\Re z)^2}\Vert u \Vert_{L^2(\R_{-})}^{2}.
\end{align*}
Then, it yields that, for all $u\in H^2(\R)$,
\begin{align*}
\Vert (\mathscr{L}-z)u \Vert^2 \geq |i-z|^2 \Vert u\Vert^2 + \left( 4 \Im z -\frac{4}{(\Re z)^2} \right)\Vert u\Vert_{L^2(\R_{-})}^2.
\end{align*}
Therefore, if $\Im z \geq \frac{1}{(\Re z)^2}$ and $\Re z<0$, then, for all $u\in H^2(\R)$,
\[ \Vert (\mathscr{L}-z)u \Vert^2 \geq |i-z|^2 \Vert u \Vert^2.\]
In the same manner, it can be shown that if $-\Im z \geq \frac{1}{(\Re z)^2}$ and $\Re z<0$, then, for all $u\in H^2(\R)$,
\[ \Vert (\mathscr{L}-z)u \Vert^2 \geq |-i-z|^2 \Vert u \Vert^2.\]
This implies that if $|\Im z|\geq \frac{1}{(\Re z)^2}$ and $\Re z<0$, then
\[ \Vert  (\mathscr{L}-z)^{-1} \Vert\leq \frac{1}{\min \{ |i-z|,|-i-z|\}}= \frac{1}{\textup{dist}(z,\sigma(\mathscr{L}))}.\]
By using the first inequality in \eqref{Resolvent out}, we obtain the equality
\[ \Vert  (\mathscr{L}-z)^{-1} \Vert= \frac{1}{\textup{dist}(z,\sigma(\mathscr{L}))}\]
on the blue region in Figure \ref{Fig: Out W(L)}.
\end{remark}
\begin{figure}[h!]
\centering
\begin{tikzpicture}
\fill[ fill=magenta!10](0,1) -- (3,1)--(3,-1)--(0,-1);
\fill[ fill=green!10](-5,1)--(3,1)--(3,3)--(-5,3);
\fill[ fill=green!10](-5,-1)--(3,-1)--(3,-3)--(-5,-3);

\draw[-,ultra thick, ultra thick, magenta] (0,1)--(3,1);
\draw[-,ultra thick, ultra thick, magenta] (0,-1)--(3,-1);
\draw[name path= A,-,dashed, ultra thick, green!100] (0,1)--(-5,1);
\draw[name path= B,-,dashed, ultra thick, green!100] (0,-1)--(-5,-1);
\draw[->,ultra thick] (-5,0)--(3,0) node[right]{$\Re z$};
\draw[->,ultra thick] (0,-3)--(0,3) node[above]{$\Im z$};
\draw[name path= 1,ultra thick, domain=-5:-1, smooth, variable=\x, red]  plot ({\x}, {1/\x^2});
\draw[name path =2,ultra thick, domain=-5:-1, smooth, variable=\x, red]  plot ({\x}, {-1/\x^2});
\tikzfillbetween[
    of=A and 2
  ] {cyan!20};
  
\tikzfillbetween[
    of=B and 1
  ] {cyan!20};
\draw (0,1) node[fill,magenta,circle,scale=0.4,label= above right:{$i$}] {};
\draw (0,-1) node[fill,magenta,circle,scale=0.4,label=below right:{$-i$}]{};
\node[black] at (2,2) {$W(\mathscr{L})$};
\node[black] at (2,-2) {$W(\mathscr{L})$};
\node[black] at (2,0.5) {$\textup{Num}(\mathscr{L})$};

\end{tikzpicture}
\caption{The case $V_{+}=i$ and $V_{-}=-i$: Two red lines are the graph of the function $|\Im z|=\frac{1}{(\Re z)^2}$ and $\Vert (\mathscr{L}-z)^{-1} \Vert=\frac{1}{\textup{dist}(z,\sigma(\mathscr{L}))}$ happens on both the green and the blue regions. }\label{Fig: Out W(L)}
\end{figure}
Since the resolvent $(\mathscr{L}-z)^{-1}$ is a holomorphic function on $\rho(\mathscr{L})$, its norm is continuous and thus, bounded on any compact set in $\rho(\mathscr{L})$. Therefore, if we want to find a place where the resolvent norm blows up, we should try to find this place in an unbounded set in $\rho(\mathscr{L})$. In view of Proposition \ref{Prop Resolvent out}, it can be seen that the resolvent norm is a constants when $z$ moves parallel to the numerical range to infinity outside the numerical range and it is decaying when $z$ goes far away from the numerical range. This tells us that the blowing-up place of the resolvent norm should be found inside the numerical range and this is the content of the following theorem.
\begin{theorem}\label{Theo Norm of resolvent}
Let $V_{\pm}\in \C$ such that $\Im V_{+} \neq \Im V_{-}$ and $\mathscr{L}$ be the operator defined as in Proposition \ref{Prop Property}. Let $z\in \C$ such that $\Im z\in |(\Im V_{+},\Im V_{-})|$, then 
\begin{equation*}
\left\Vert (\mathscr{L}-z)^{-1}\right\Vert = \frac{2|\Im V_{+}-\Im V_{-}|}{|V_{+}-V_{-}|} \frac{\Re z}{|\Im V_{+}-\Im z||\Im V_{-}-\Im z|} \left(1+\mathcal{O}\left( \frac{1}{|\Re z|}\right)\right),
\end{equation*}
as $\Re z\to +\infty$ and uniformly for all $\Im z\in |(\Im V_{+},\Im V_{-})|$.
\end{theorem}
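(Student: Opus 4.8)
The plan is to read the operator norm directly off the explicit kernel $\mathcal{R}_{z}$ of Proposition \ref{Prop Resolvent}, exploiting that the resolvent splits into a harmless \emph{free} part and a finite-rank \emph{reflection} part that carries all of the growth. Concretely I would write $(\mathscr{L}-z)^{-1}=\mathcal{F}_{z}+\mathcal{K}_{z}$, where $\mathcal{F}_{z}$ is the block-diagonal convolution operator with kernel $\frac{1}{2k_{+}(z)}e^{-k_{+}(z)|x-y|}$ on $\R_{+}\times\R_{+}$ and $\frac{1}{2k_{-}(z)}e^{-k_{-}(z)|x-y|}$ on $\R_{-}\times\R_{-}$, while $\mathcal{K}_{z}$ collects the four separable exponential terms. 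Setting $\phi_{+}(x)=e^{-k_{+}(z)x}\mathbf{1}_{\R_{+}}(x)$ and $\phi_{-}(x)=e^{k_{-}(z)x}\mathbf{1}_{\R_{-}}(x)$, the reflection part is the rank-$\le 2$ operator $\mathcal{K}_{z}f=\sum_{j,k\in\{+,-\}}c_{jk}\langle f,\overline{\phi_{k}}\rangle\,\phi_{j}$, with $c_{+-}=c_{-+}=(k_{+}(z)+k_{-}(z))^{-1}$, $c_{++}=\frac{k_{+}(z)-k_{-}(z)}{2k_{+}(z)(k_{+}(z)+k_{-}(z))}$ and $c_{--}=-\frac{k_{+}(z)-k_{-}(z)}{2k_{-}(z)(k_{+}(z)+k_{-}(z))}$, read off from \eqref{kpkn}.

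First I would dispatch the free part. Each block of $\mathcal{F}_{z}$ is a restriction of the whole-line resolvent of the constant-coefficient normal operator $-\frac{\dd^{2}}{\dd x^{2}}+V_{\pm}$, whose norm equals $1/\textup{dist}(z,[V_{\pm},+\infty))$ by \eqref{Norm equal 1/d} (or a one-line Fourier computation of the convolution symbol $(k_{\pm}^{2}+\xi^{2})^{-1}$). For $\Re z$ large this distance equals $|a_{\pm}|$, where I abbreviate $a_{\pm}:=\Im V_{\pm}-\Im z$, so that $\|\mathcal{F}_{z}\|\le\max(|a_{+}|^{-1},|a_{-}|^{-1})$. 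This will turn out to be a factor $O(1/\Re z)$ smaller than $\|\mathcal{K}_{z}\|$, hence subdominant.

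The heart of the argument is the \emph{exact} evaluation of $\|\mathcal{K}_{z}\|$. Since $\phi_{+},\phi_{-}$ have disjoint supports, $\langle\phi_{+},\phi_{-}\rangle=0$ and likewise for $\overline{\phi_{\pm}}$; writing $\mathcal{K}_{z}=\Phi C\Psi^{*}$ with $\Phi=[\phi_{+}\;\phi_{-}]$, $\Psi=[\overline{\phi_{+}}\;\overline{\phi_{-}}]$ and $C=(c_{jk})$, both Gram matrices are diagonal and equal to $D:=\textup{diag}(\|\phi_{+}\|^{2},\|\phi_{-}\|^{2})$ with $\|\phi_{\pm}\|^{2}=(2\Re k_{\pm}(z))^{-1}$. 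Orthonormalizing the columns then gives the identity $\|\mathcal{K}_{z}\|=\|D^{1/2}CD^{1/2}\|$ as a $2\times2$ matrix norm, exactly. Now I run the asymptotics for $\Re z\to+\infty$ with $\Im z\in|(\Im V_{+},\Im V_{-})|$: a principal-root expansion gives $\Re k_{\pm}(z)=\frac{|a_{\pm}|}{2\sqrt{\Re z}}(1+O(1/\Re z))$, while the algebraic identity $(k_{+}-k_{-})(k_{+}+k_{-})=V_{+}-V_{-}$ combined with $|k_{+}(z)-k_{-}(z)|\sim 2\sqrt{\Re z}$ — the imaginary parts of $k_{+},k_{-}$ have \emph{opposite signs} precisely because $\Im z$ lies between $\Im V_{+}$ and $\Im V_{-}$ — forces $k_{+}(z)+k_{-}(z)=\frac{V_{+}-V_{-}}{k_{+}-k_{-}}$ to be of size $\sim|V_{+}-V_{-}|/(2\sqrt{\Re z})$. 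This small denominator is the mechanism of the blow-up. Substituting into $D^{1/2}CD^{1/2}$ shows that, up to a unimodular scalar, this matrix equals $\frac{2\Re z}{V_{+}-V_{-}}\,vv^{T}(1+O(1/\Re z))$ with $v=(|a_{+}|^{-1/2},|a_{-}|^{-1/2})$, a rank-one matrix whose norm is
\[
\|\mathcal{K}_{z}\|=\frac{2\Re z}{|V_{+}-V_{-}|}\Big(\frac{1}{|a_{+}|}+\frac{1}{|a_{-}|}\Big)\Big(1+O(1/\Re z)\Big).
\]

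Finally I would assemble the estimate. Using $|a_{+}|+|a_{-}|=|\Im V_{+}-\Im V_{-}|$ (valid exactly because $\Im z$ sits between the two heights) turns the bracket into $\tfrac{|\Im V_{+}-\Im V_{-}|}{|a_{+}||a_{-}|}$, giving the stated leading term for $\|\mathcal{K}_{z}\|$; then $\big|\,\|(\mathscr{L}-z)^{-1}\|-\|\mathcal{K}_{z}\|\,\big|\le\|\mathcal{F}_{z}\|=O(1/\Re z)\,\|\mathcal{K}_{z}\|$ upgrades it to the full resolvent. Note that the exact $2\times2$ computation is what delivers the upper and lower bound simultaneously, so the result is a genuine asymptotic equality rather than a two-sided estimate. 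The main obstacle I anticipate is the \emph{uniformity} as $\Im z$ approaches the endpoints of $|(\Im V_{+},\Im V_{-})|$, where $|a_{\pm}|\to 0$: there one must check that the expansions of $\Re k_{\pm}(z)$, of $(k_{+}+k_{-})^{-1}$, and the rank-one reduction of $D^{1/2}CD^{1/2}$ all retain a \emph{relative} error $O(1/\Re z)$ with constants independent of $\Im z$, and that $\|\mathcal{F}_{z}\|/\|\mathcal{K}_{z}\|$ stays $O(1/\Re z)$ even in this degenerating regime.
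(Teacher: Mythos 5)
Your proposal is correct, and it keeps the paper's overall architecture: the same splitting of the resolvent kernel of Proposition \ref{Prop Resolvent} into the separable part (your $\mathcal{K}_{z}$, the paper's $\mathcal{R}_{1,z}$ in \eqref{R1}) and the convolution part (your $\mathcal{F}_{z}$, the paper's $\mathcal{R}_{2,z}$ in \eqref{R2}), the same expansions of $k_{\pm}(z)$, $\Re k_{\pm}(z)$ and of $k_{+}(z)+k_{-}(z)=(V_{+}-V_{-})/(k_{+}(z)-k_{-}(z))$, and the same dominance argument. The central step, however, is genuinely different. The paper never evaluates $\Vert \mathcal{R}_{1,z}\Vert$ exactly: Proposition \ref{Prop Norm R1} sandwiches it between an upper bound (triangle and H\"older estimates plus the optimization Lemma \ref{Lem Optimize}) and a lower bound (the explicit test function $f_{0}$ of \eqref{f0} plus the same lemma), and asymptotic equality is then rescued by the a posteriori observation that $B(z)$ and $\widetilde{B}(z)$ share the same leading term. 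You instead exploit that $\phi_{+},\phi_{-}$ and their conjugates have disjoint supports, so both Gram matrices are diagonal and orthonormalization yields the \emph{exact} identity $\Vert\mathcal{K}_{z}\Vert=\Vert D^{1/2}CD^{1/2}\Vert$ as a $2\times2$ matrix norm, which is rank one at leading order; this removes Lemma \ref{Lem Optimize} entirely and explains structurally why the paper's two bounds must coincide. What the paper's route buys in exchange is reusable material: its two-sided bounds hold for every $z\in\rho(\mathscr{L})$, not just asymptotically, and the lower-bound test function $f_{0}$ is precisely the germ of the optimal pseudomode of Theorem \ref{Theo Pseudomode}. Your treatment of the remainder also differs harmlessly: you bound $\Vert\mathcal{F}_{z}\Vert$ by $\max\left(|\Im V_{+}-\Im z|^{-1},|\Im V_{-}-\Im z|^{-1}\right)$ by viewing each block as a compression of a whole-line normal resolvent, whereas the paper uses the Schur test to get \eqref{Bound above norm R2}; both give a relative error $\mathcal{O}(1/\Re z)$. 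The uniformity issue you flag near the endpoints is resolved exactly as in the paper's Lemma \ref{Lem Asymptotic}: all expansions there are multiplicative with relative errors uniform in $\Im z$, and since the $\Im z$-dependence of $D^{1/2}CD^{1/2}$ enters only through the factors $|\Im V_{\pm}-\Im z|^{-1/2}$ collected in $v$, the rank-one reduction and the ratio $\Vert\mathcal{F}_{z}\Vert/\Vert\mathcal{K}_{z}\Vert$ retain uniform $\mathcal{O}(1/\Re z)$ errors.
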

Let us note that when $V_{+}=i$ and $V_{-}=-i$, thanks to Theorem \ref{Theo Norm of resolvent}, we obtain the formula
\[ \left\Vert (\mathscr{L}-z)^{-1}\right\Vert =\frac{2\Re z}{1-|\Im z|^2} \left(1+\mathcal{O}\left( \frac{1}{|\Re z|}\right)\right),\]
as $\Re z\to +\infty$ and uniformly for all $|\Im z|<1$, and this statement is clearly an improvement of \cite[Theorem 2.3]{Henry-Krejcirik17}. Furthermore, by letting $\frac{2\Re z}{1-|\Im z|^2}=\frac{1}{\varepsilon}$, we obtain the shape of level curves in Figure \ref{Fig: Level Curve} which are very similar to level set $\{z\in \C : \Vert (\mathscr{L}-z)^{-1} \Vert=\frac{1}{\varepsilon}\}$ inside the numerical range which is computed numerically in \cite[Figure 1]{Henry-Krejcirik17}. 
\begin{figure}[h!]
\includegraphics[width=\textwidth]{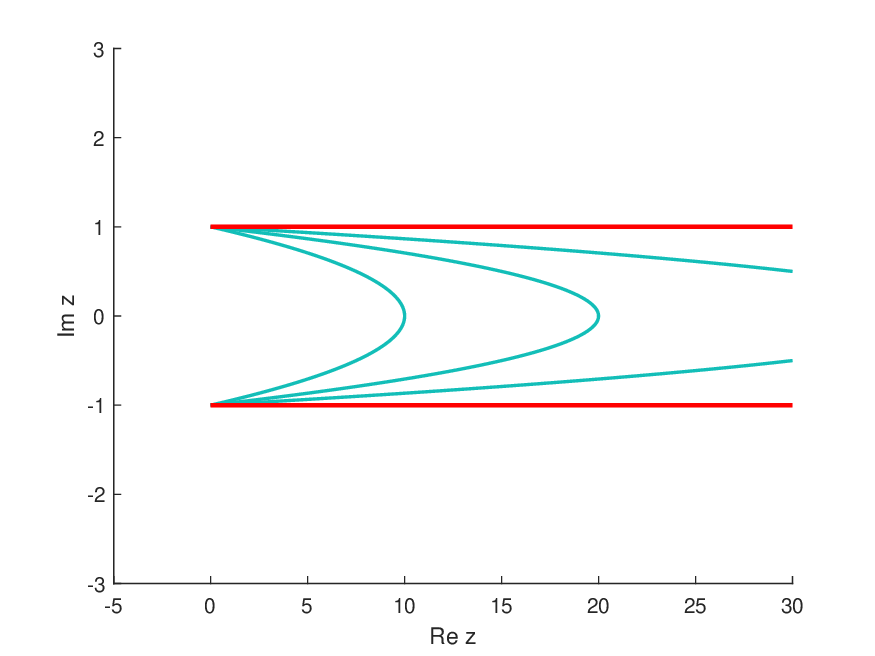}
\caption{The level curves $\frac{2\Re z}{1-|\Im z|^2}=\frac{1}{\varepsilon}$ in the complex $z$-plane are computed for several values of $\varepsilon$; the red lines are the essential spectrum of $\mathscr{L}$ in the case $V_{+}=i$ and $V_{-}=-i$; the smaller $\varepsilon$ is, the closer to infinity and red lines the level curve is.}\label{Fig: Level Curve}
\end{figure}

The direct result of Proposition \ref{Prop Resolvent out} and Theorem \ref{Theo Norm of resolvent} is the following corollary which shows us that the source of the non-trivial pseudospectra comes from the difference between $\Im V_{+}$ and $\Im V_{-}$, regardless of what values $\Re V_{+}$ and $\Re V_{-}$ are. 
\begin{corollary}\label{Cor Trivial}
Let $V_{\pm}\in \C$ and $\mathscr{L}$ be the operator defined as in Proposition \ref{Prop Property}. Then, the following holds.
\begin{enumerate}[label=\textbf{\textup{(\arabic*)}}]
\item If $\Im V_{+}=\Im V_{-}$, the pseudospectrum of $\mathscr{L}$ is given by
\[ \sigma_{\varepsilon}(\mathscr{L})=\{z\in \C: \textup{dist}(z,\sigma(\mathscr{L}))<\varepsilon \}.\]
\item If $\Im V_{+}\neq \Im V_{-}$, for every $\varepsilon'\in (0,1)$, there exists $M>0$ such that, for every $\varepsilon>0$, we have
\begin{align*}
\sigma_{\varepsilon}(\mathscr{L})\supset &\left\{z\in \C: \textup{dist}(z,\sigma(\mathscr{L}))<\varepsilon \right\}\\
 &\bigcup \left\{z\in \C:\begin{aligned}
 &\Re z>M \text{ and } \Im z \in |(\Im V_{+},\Im V_{-})|  \text{ and }\\
 &\Re z> \frac{1}{\varepsilon(1-\varepsilon')} \frac{|V_{+}-V_{-}||\Im V_{+}-z||\Im V_{-}-z|}{2|\Im V_{+}-\Im V_{-}|}
\end{aligned}  \right\}.
\end{align*}
\end{enumerate}
As a consequence, $\sigma_{\varepsilon}(\mathscr{L})$ is trivial if and only if $\Im V_{+}=\Im V_{-}$. 
\end{corollary}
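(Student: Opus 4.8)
The plan is to read both assertions off the two results already established: Proposition \ref{Prop Resolvent out} provides the two-sided control of the resolvent norm outside the numerical range, while Theorem \ref{Theo Norm of resolvent} supplies the precise blow-up rate inside the numerical range when $\Im V_{+}\neq\Im V_{-}$. I shall also use the general lower bound $\Vert(\mathscr{L}-z)^{-1}\Vert\geq 1/\textup{dist}(z,\sigma(\mathscr{L}))$, valid on all of $\rho(\mathscr{L})$ for any closed operator, which immediately gives the universal inclusion $\{z:\textup{dist}(z,\sigma(\mathscr{L}))<\varepsilon\}\subset\sigma_{\varepsilon}(\mathscr{L})$.

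For part \textbf{(1)}, when $\Im V_{+}=\Im V_{-}$ the numerical range \eqref{Num Range} collapses onto the half-line $[\widehat{V},+\infty)$, which by \eqref{V hat} coincides with $\sigma(\mathscr{L})$. Hence for every $z$ off this line one has $\textup{dist}(z,\textup{Num}(\mathscr{L}))=\textup{dist}(z,\sigma(\mathscr{L}))$, so that $W(\mathscr{L})$ exhausts the whole resolvent set and \eqref{Resolvent out 2} holds throughout $\rho(\mathscr{L})$; equivalently, $\mathscr{L}$ is normal by Proposition \ref{Prop Property}\,(3a) and \eqref{Norm equal 1/d} applies directly. Inserting $\Vert(\mathscr{L}-z)^{-1}\Vert=1/\textup{dist}(z,\sigma(\mathscr{L}))$ into definition \eqref{Def Pseuspectrum 1} and absorbing the points of $\sigma(\mathscr{L})$ (which lie at distance $0<\varepsilon$) into the open neighborhood yields exactly $\sigma_{\varepsilon}(\mathscr{L})=\{z:\textup{dist}(z,\sigma(\mathscr{L}))<\varepsilon\}$.

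For part \textbf{(2)}, the first region of the union is the universal inclusion noted above. For the second region, fix $\varepsilon'\in(0,1)$. Because the error term in Theorem \ref{Theo Norm of resolvent} is $\mathcal{O}(1/\Re z)$ \emph{uniformly} in $\Im z\in|(\Im V_{+},\Im V_{-})|$, there is a single threshold $M=M(\varepsilon')$, independent of $\varepsilon$, so that $\Re z>M$ forces the factor $1+\mathcal{O}(1/\Re z)$ to exceed $1-\varepsilon'$. For such $z$ the theorem gives $\Vert(\mathscr{L}-z)^{-1}\Vert\geq (1-\varepsilon')\,\tfrac{2|\Im V_{+}-\Im V_{-}|}{|V_{+}-V_{-}|}\,\tfrac{\Re z}{|\Im V_{+}-\Im z|\,|\Im V_{-}-\Im z|}$, and the displayed real-part inequality is precisely the rearrangement of this bound into $\Vert(\mathscr{L}-z)^{-1}\Vert>\varepsilon^{-1}$, i.e.\ $z\in\sigma_{\varepsilon}(\mathscr{L})$.

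Finally, for the equivalence, part \textbf{(1)} furnishes triviality (with $C=1$) when $\Im V_{+}=\Im V_{-}$. The converse is the only step requiring mild care, and I expect it to be the sole obstacle since all the analytic work sits inside Theorem \ref{Theo Norm of resolvent}. Fixing a height $c$ strictly between $\Im V_{+}$ and $\Im V_{-}$ (say the midpoint), the second region of part \textbf{(2)} contains the horizontal ray $\{\Re z+ic:\Re z>\max(M,K/\varepsilon)\}$ for a constant $K>0$ independent of $\varepsilon$; yet every such point keeps the \emph{fixed} distance $d_{0}=\min(|c-\Im V_{+}|,|c-\Im V_{-}|)>0$ to the spectrum, since once $\Re z$ exceeds $\Re V_{\pm}$ the nearest points of the two half-lines of Theorem \ref{Theo Spectrum of L} are reached vertically. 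Thus for arbitrarily small $\varepsilon$ there exist points of $\sigma_{\varepsilon}(\mathscr{L})$ at the constant distance $d_{0}$ from $\sigma(\mathscr{L})$, so no inclusion $\sigma_{\varepsilon}(\mathscr{L})\subset\{\,\textup{dist}(\cdot,\sigma(\mathscr{L}))<C\varepsilon\,\}$ can survive as $\varepsilon\to 0$, and the pseudospectrum is non-trivial.
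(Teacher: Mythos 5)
Your proposal is correct and follows essentially the same route as the paper: part \textbf{(1)} via Proposition \ref{Prop Resolvent out} (equivalently, normality) once the closed numerical range collapses onto the spectrum, part \textbf{(2)} by extracting a uniform threshold $M(\varepsilon')$ from the $\mathcal{O}(1/\Re z)$ error in Theorem \ref{Theo Norm of resolvent} and rearranging the resulting lower bound into $\Vert(\mathscr{L}-z)^{-1}\Vert>\varepsilon^{-1}$, and the non-triviality conclusion by exhibiting pseudospectral points on a fixed horizontal line strictly between $\Im V_{+}$ and $\Im V_{-}$ that remain at a constant positive distance from the spectrum. The paper's proof uses the midline $\Im z=\tfrac{\Im V_{+}+\Im V_{-}}{2}$ for this last step, which is exactly your choice.
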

\subsection{The optimal pseudomode}
One of the early studies on the pseudomode construction for the non-self-adjoint Hamiltonian can be attributed to Davies' work \cite{Davies99}. In his worked, Davies scaled variable so that the original Sch\"{o}dinger operator can be transformed to its semiclassical version and thanks to this fact, the WKB pseudomode is constructed. However, this method seems merely effective for a certain class of polynomial potentials in which the scaling can be performed, while it is inapplicable for other type of potentials such as logarithmic or exponential potentials. Twenty years later, Krej\v{c}i\v{r}\'{i}k and Siegl in \cite{Krejcirik-Siegl19} developed a \emph{direct construction of large-energy pseudomode} for the Schr\"{o}dinger operator, which does not require passage through semiclassical setting and can cover all of the above-mentioned potentials. Furthermore, they also created two methods, the first is called \emph{perturbative approach} and the second is called \emph{mollification strategy}, to deal with lower regularity potentials (discontinuous potentials are also included). However, when these methods are applied for the potential $V(x)=i \,\textup{sgn}(x)$, the best possible decaying rate of the quotient $\frac{\Vert (\mathscr{L}-z) \Psi_{z} \Vert}{\Vert \Psi_{z} \Vert}$ that may be attained is $\mathcal{O}\left(\frac{1}{z^{1/4}}\right)$ for the perturbative approach, see \cite[Example 4.5]{Krejcirik-Siegl19}, and $\mathcal{O}\left(\frac{1}{z^{1/2}}\right)$ for the mollification strategy, see \cite[Example 4.11]{Krejcirik-Siegl19}, as $z\to +\infty$ on the real axis. Our next result will present an explicit pseudomode $\Psi_z$ that yields the precise decaying rate which should be $\mathcal{O}\left(\frac{1}{z}\right)$. Better yet, an optimal outcome, which is verified from Theorem \ref{Theo Norm of resolvent}, is also attained.
\begin{theorem}\label{Theo Pseudomode}
Let $V_{\pm}\in \C$ such that $\Im V_{+} \neq \Im V_{-}$ and $\mathscr{L}$ be the operator defined as in Proposition \ref{Prop Property}, and $k_{\pm}(z)$ as in \eqref{kpkn}. For each $z\in \rho(\mathscr{L})$, we define a function $\Psi_{z}$ as follows
\begin{equation}\label{Pseudomode}
\Psi_{z}(x) = \left( n_{1}(z) e^{k_{-}(z)x}+n_{2}(z) e^{\overline{k_{-}(z)}x}\right)\textbf{\textup{1}}_{\R_{-}}(x)+\left(p_{1}(z) e^{-k_{+}(z)x}+p_{2}(z) e^{-\overline{k_{+}(z)}x}\right)\textbf{\textup{1}}_{\R_{+}}(x),
\end{equation}
where $n_{1}(z),p_{1}(z)$ and $n_{2}(z),p_{2}(z)$ are complex numbers depending on $z$ given by
\begin{equation}\label{Constants}
\left\{
\begin{aligned}
&n_{1}(z)=\frac{k_{+}(z)+\overline{k_{-}(z)}}{k_{+}(z)+k_{-}(z)}|\Im V_{+}-\Im z|+\frac{k_{+}(z)-\overline{k_{+}(z)}}{k_{+}(z)+k_{-}(z)}|\Im V_{-}-\Im z|,\\
&p_{1}(z)=-\frac{k_{-}(z)-\overline{k_{-}(z)}}{k_{+}(z)+k_{-}(z)}|\Im V_{+}-\Im z|-\frac{\overline{k_{+}(z)}+k_{-}(z)}{k_{+}(z)+k_{-}(z)}|\Im V_{-}-\Im z|,\\
&n_{2}(z)=-|\Im V_{+}-\Im z|,\\
&p_{2}(z)=|\Im V_{-}-\Im z|.
\end{aligned}
\right.
\end{equation}
Then, $\Psi_{z}\in H^2(\R)$ for all $z\in \rho(\mathscr{L})$ and it makes
\[ \frac{\Vert(\mathscr{L}-z)\Psi_{z}\Vert}{\Vert \Psi_{z} \Vert} = \frac{|V_{+}-V_{-}|}{2|\Im V_{+}-\Im V_{-}|} \frac{|\Im V_{+}-\Im z||\Im V_{-}-\Im z|}{\Re z} \left(1+\mathcal{O}\left( \frac{1}{|\Re z|}\right)\right) ,\]
as $\Re z \to +\infty$ and uniformly for all $\Im z\in |(\Im V_{+},\Im V_{-})|$.
\end{theorem}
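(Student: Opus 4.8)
The plan is to exploit the fact that, on each half-line, $(\mathscr{L}-z)$ annihilates the decaying exponential appearing in the resolvent, while the \emph{conjugate} exponential is only an approximate null solution whose defect is explicitly computable. Indeed, a direct differentiation gives, for $x<0$,
\[
(\mathscr{L}-z)\,e^{\overline{k_{-}(z)}\,x}=\bigl(-\overline{k_{-}(z)}^{\,2}+V_{-}-z\bigr)e^{\overline{k_{-}(z)}\,x}=2i\,(\Im V_{-}-\Im z)\,e^{\overline{k_{-}(z)}\,x},
\]
and, for $x>0$,
\[
(\mathscr{L}-z)\,e^{-\overline{k_{+}(z)}\,x}=2i\,(\Im V_{+}-\Im z)\,e^{-\overline{k_{+}(z)}\,x},
\]
whereas $e^{k_{-}(z)x}$ and $e^{-k_{+}(z)x}$ lie in the kernel of $(\mathscr{L}-z)$ on the respective half-lines. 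Thus the coefficients $n_{1},p_{1}$ are free to be chosen so as to glue the two half-line expressions into a globally regular function, while $n_{2},p_{2}$ are kept small so that the residual $(\mathscr{L}-z)\Psi_{z}$ stays small; this is exactly the role played by the constants in \eqref{Constants}.

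First I would verify that $\Psi_{z}\in H^{2}(\R)$. On each half-line $\Psi_{z}$ is a combination of exponentials decaying at infinity (because $\Re k_{\pm}(z)>0$ for $z\in\rho(\mathscr{L})$ by the principal-branch convention), hence it belongs to $H^{2}$ of each half-line; the only issue is the junction at $x=0$. Since a globally $C^{1}$ piecewise-smooth function automatically has its distributional second derivative in $L^{2}$ with no $\delta$-contribution, it suffices to check continuity of the value and of the first derivative at $0$, i.e.
\[
n_{1}+n_{2}=p_{1}+p_{2},\qquad k_{-}n_{1}+\overline{k_{-}}\,n_{2}=-k_{+}p_{1}-\overline{k_{+}}\,p_{2}.
\]
A short algebraic computation shows that these two identities are precisely what \eqref{Constants} encodes (the common factor $k_{+}+k_{-}$ in the denominators of $n_{1},p_{1}$ is there to solve this $2\times2$ linear system). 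With $\Psi_{z}\in H^{2}$ in hand, $(\mathscr{L}-z)\Psi_{z}$ is computed piecewise with no boundary terms, giving $2i(\Im V_{-}-\Im z)n_{2}e^{\overline{k_{-}}x}$ on $\R_{-}$ and $2i(\Im V_{+}-\Im z)p_{2}e^{-\overline{k_{+}}x}$ on $\R_{+}$; using $\int_{-\infty}^{0}|e^{\overline{k_{-}}x}|^{2}\dd x=\tfrac{1}{2\Re k_{-}}$ and the analogous formula on $\R_{+}$, I obtain the exact identity
\[
\Vert(\mathscr{L}-z)\Psi_{z}\Vert^{2}=\frac{2|\Im V_{-}-\Im z|^{2}|n_{2}|^{2}}{\Re k_{-}}+\frac{2|\Im V_{+}-\Im z|^{2}|p_{2}|^{2}}{\Re k_{+}}.
\]

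Next I would compute $\Vert\Psi_{z}\Vert^{2}$ exactly. Expanding the two half-line integrals produces, besides the diagonal terms $\tfrac{|n_{1}|^{2}+|n_{2}|^{2}}{2\Re k_{-}}$ and $\tfrac{|p_{1}|^{2}+|p_{2}|^{2}}{2\Re k_{+}}$, the cross terms $\Re\!\bigl(n_{1}\overline{n_{2}}/k_{-}\bigr)$ and $\Re\!\bigl(p_{1}\overline{p_{2}}/k_{+}\bigr)$. The heart of the proof is then the large-$\Re z$ asymptotics of $k_{\pm}(z)=\sqrt{V_{\pm}-z}$. Writing $V_{\pm}-z=-R_{\pm}+it_{\pm}$ with $R_{\pm}=\Re z-\Re V_{\pm}\to+\infty$ and $t_{\pm}=\Im V_{\pm}-\Im z$ fixed, the principal branch yields
\[
\Re k_{\pm}=\frac{|t_{\pm}|}{2\sqrt{\Re z}}\bigl(1+o(1)\bigr),\qquad \Im k_{\pm}=\sgn(t_{\pm})\sqrt{\Re z}\,\bigl(1+o(1)\bigr).
\]
Because $\Im z$ lies strictly between $\Im V_{+}$ and $\Im V_{-}$, the numbers $t_{+}$ and $t_{-}$ have opposite signs, so the leading imaginary parts of $k_{+}$ and $k_{-}$ \emph{cancel} in $k_{+}+k_{-}$; tracking the next order I get $\Im(k_{+}+k_{-})\sim\tfrac{\Re V_{-}-\Re V_{+}}{2\sqrt{\Re z}}$ together with $\Re(k_{+}+k_{-})\sim\tfrac{|t_{+}|+|t_{-}|}{2\sqrt{\Re z}}$, whence the clean identity
\[
|k_{+}+k_{-}|\sim\frac{1}{2\sqrt{\Re z}}\sqrt{(\,|t_{+}|+|t_{-}|\,)^{2}+(\Re V_{+}-\Re V_{-})^{2}}=\frac{|V_{+}-V_{-}|}{2\sqrt{\Re z}},
\]
using $|t_{+}|+|t_{-}|=|\Im V_{+}-\Im V_{-}|$. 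Feeding these into \eqref{Constants} gives $|n_{1}|\sim|p_{1}|\sim\dfrac{4\,\Re z\,(|t_{+}|+|t_{-}|)}{|V_{+}-V_{-}|}$, which are of order $\Re z$ and therefore dominate the $O(1)$ quantities $|n_{2}|,|p_{2}|$; consequently the diagonal $|n_{1}|^{2},|p_{1}|^{2}$ terms dominate $\Vert\Psi_{z}\Vert^{2}$, while the cross terms and the $|n_{2}|^{2},|p_{2}|^{2}$ terms are of lower order. Combining the leading term of $\Vert\Psi_{z}\Vert^{2}$ with that of $\Vert(\mathscr{L}-z)\Psi_{z}\Vert^{2}$ and simplifying with $|t_{+}|+|t_{-}|=|\Im V_{+}-\Im V_{-}|$ collapses the quotient to the asserted expression.

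The main obstacle is not the leading-order bookkeeping but the \emph{uniformity} of the error over the whole open interval $|(\Im V_{+},\Im V_{-})|$, including the regime where $\Im z$ approaches an endpoint and one of $|t_{\pm}|$ degenerates to $0$: there $\Re k_{\pm}\to 0$, the factor $1/\Re k_{\pm}$ in the norms blows up, and one must check that the relative error stays $\mathcal{O}(1/\Re z)$ with a constant independent of $\Im z$. The delicate cancellation producing $|k_{+}+k_{-}|\sim|V_{+}-V_{-}|/(2\sqrt{\Re z})$ must likewise be controlled with uniform remainders. I would handle this by expanding $k_{\pm}$ through the exact relations $(\Re k_{\pm})^{2}=\tfrac12\bigl(-R_{\pm}+\sqrt{R_{\pm}^{2}+t_{\pm}^{2}}\bigr)$ and $(\Im k_{\pm})^{2}=\tfrac12\bigl(R_{\pm}+\sqrt{R_{\pm}^{2}+t_{\pm}^{2}}\bigr)$, keeping all remainders as explicit functions of $R_{\pm}$ and the bounded parameters $t_{\pm}$, so that every estimate is uniform in $\Im z$.
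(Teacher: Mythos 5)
Your proposal is correct and follows essentially the same route as the paper's proof: the same gluing conditions at $x=0$ are shown to be encoded by \eqref{Constants}, the same piecewise identity $(\mathscr{L}-z)\Psi_{z}=2i(\Im V(x)-\Im z)\bigl(n_{2}(z)e^{\overline{k_{-}(z)}x}\textbf{\textup{1}}_{\R_{-}}+p_{2}(z)e^{-\overline{k_{+}(z)}x}\textbf{\textup{1}}_{\R_{+}}\bigr)$ gives the residual, and the same expansions of $k_{\pm}(z)$ — including the crucial cancellation of the leading $i\sqrt{\Re z}$ terms in $k_{+}(z)+k_{-}(z)$, which produces $|k_{+}(z)+k_{-}(z)|\sim|V_{+}-V_{-}|/(2\sqrt{\Re z})$ — drive the asymptotics, with uniformity in $\Im z$ secured exactly as you suggest, via exact algebraic formulas in which $|\Im V_{\pm}-\Im z|$ factors out (this is the paper's Lemma \ref{Lem Asymptotic}). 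The only cosmetic difference is that the paper controls $\Vert\Psi_{z}\Vert$ by the triangle inequality applied to the two exponential families separately, whereas you compute the cross terms explicitly and show they are of lower order; both bookkeeping schemes yield the same uniform $\mathcal{O}(1/|\Re z|)$ remainder.
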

\subsection{Complex point interaction}
When the operator $\mathscr{L}$ is rigorously investigated above, in this subsection, we want to discuss its perturbed model by the Dirac delta generalized function $\delta_{0}$, the so-called \emph{point interaction}. For the title of this subsection, we want to indicate that the adding distribution $\delta_{0}$ will be multiplied by a complex number $\alpha$ and then, the formal expression of this perturbed operator, denoted by $\mathscr{L}_{\alpha}$, is given as in \eqref{Complex delta operator}. Our first aim is to define its \emph{realization} in $L^2(\R)$ via the following sesquilinear
\begin{align*}
Q_{\alpha}(u,v)&\coloneqq\int_{\R} u'(x) \overline{v'(x)}\, \dd x + V_{+}\int_{0}^{+\infty} u(x) \overline{v(x)}\, \dd x +V_{-}\int_{-\infty}^{0} u(x) \overline{v(x)}\, \dd x+\alpha u(0)\overline{v(0)},\\
\textup{Dom}(Q_{\alpha})&\coloneqq  H^1(\R).
\end{align*}
The definition and some properties of $\mathscr{L}_{\alpha}$ is given in the following proposition, whose detailed proof can be found in Appendix \ref{Appendix L a}.
\begin{proposition}\label{Prop Property delta}
There exists a closed densely defined operator $\mathscr{L}_{\alpha}$ whose domain is given by
\begin{equation*}
\textup{Dom}\left(\mathscr{L}_{\alpha}\right)=\left\{\begin{aligned}
u \in H^1(\R): &\text{ there exists } f\in L^2(\R) \text{ such that }\\
&Q_{\alpha}(u,v)=\left\langle f,v \right\rangle \text{ for all } v\in H^1(\R)
\end{aligned} \right\},
\end{equation*}
and
\begin{equation}\label{Laxmilgram eq Interaction}
Q_{\alpha}(u,v)=\left\langle \mathscr{L}_{\alpha}u, v\right\rangle, \qquad \forall u\in \textup{Dom}(\mathscr{L}_{\alpha}), \qquad \forall v\in H^1(\R).
\end{equation}
Then, the following holds.
\begin{enumerate}[label=\textbf{\textup{(\arabic*)}}]
\item \label{Nonempty res delta} The domain and the action of  $\mathscr{L}_{\alpha}$ can be clarified that
\begin{align*}
&\textup{Dom}(\mathscr{L}_{\alpha})\coloneqq\{u\in H^1(\R)\cap H^2(\R\setminus\{0\}): u'(0^{+})-u'(0^{-})=\alpha u(0) \},\\
&\mathscr{L}_{\alpha}u=-u''+V(x)u, \qquad \forall u \in \textup{Dom}(\mathscr{L}_{\alpha}),
\end{align*}
and its resolvent set $\rho(\mathscr{L}_{\alpha})$ is nonempty;
\item The numerical range $\textup{Num}(\mathscr{L}_{\alpha})$ is included in the sector $S_{C_{V,\alpha}, \frac{\pi}{4}}$ with a vertex 
\begin{equation}\label{Vertex}
C_{V,\alpha}=\min \{\Re V_{+}, \Re V_{-}\}-\max\{|\Im V_{+}|,|\Im V_{-}| \}-(|\Re \alpha|+|\Im \alpha|)^2,
\end{equation}
and as a consequence, $\mathscr{L}_{\alpha}$ is a $m-$sectorial operator.
\item The adjoint of $\mathscr{L}_{\alpha}$ is given by
\begin{equation}\label{Adjoint delta}
\begin{aligned}
&\textup{Dom}(\mathscr{L}_{\alpha}^{*})=\{u\in H^1(\R)\cap H^2(\R\setminus\{0\}): u'(0^{+})-u'(0^{-})=\overline{\alpha} u(0) \},\\
&\mathscr{L}_{\alpha}^{*}u=-u''+\overline{V(x)}u, \qquad \forall u \in \textup{Dom}(\mathscr{L}_{\alpha}^{*}).
\end{aligned}
\end{equation}
and as a consequence
\begin{enumerate}[label=\textbf{\textup{(\alph*)}}]
\item $\mathscr{L}_{\alpha}$ is normal if and only if 
\begin{itemize}
\item $\Im V_{+} = \Im V_{-}=0$ and $\alpha\in \R$,
\end{itemize}
or
\begin{itemize}
\item $\Im V_{+} = \Im V_{-}\neq 0$ and $\alpha=0$;
\end{itemize}
\item $\mathscr{L}_{\alpha}$ is self-adjoint if and only if $\Im V_{+} = \Im V_{-}=0$ and $\alpha\in \R$;
\item $\mathscr{L}_{\alpha}$ is always $\mathcal{T}$-self-adjoint;
\item $\mathscr{L}_{\alpha}$ is $\mathcal{P}$-self-adjoint if and only if $\Re V_{+}= \Re V_{-}$ and $\Im V_{+}=-\Im V_{-}$ and $\Re \alpha =0 $;
\item $\mathscr{L}_{\alpha}$ is $\mathcal{PT}$-symmetric if and only if  $\Re V_{+}= \Re V_{-}$ and $\Im V_{+}=-\Im V_{-}$ and $\Re \alpha =0 $.
\end{enumerate}
\end{enumerate} 
\end{proposition}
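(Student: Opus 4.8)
The plan is to build $\mathscr{L}_{\alpha}$ from the form $Q_{\alpha}$ via Kato's first representation theorem, following the same route used for $\mathscr{L}$ in Appendix \ref{Appendix 1}; the only genuinely new feature is the singular boundary term $\alpha u(0)\overline{v(0)}$. First I would record the one-dimensional trace inequality $\abs{u(0)}^{2}\leq 2\Vert u\Vert\,\Vert u'\Vert$, obtained by integrating $\tfrac{\dd}{\dd x}\abs{u}^{2}$ up to $0$ from either side. This shows that $u\mapsto\alpha u(0)\overline{u(0)}$ is infinitesimally form-bounded relative to the kinetic form $\Vert u'\Vert^{2}$, so that $Q_{\alpha}$ is sectorial and, because its real part is equivalent to the $H^{1}$-norm, closed on $H^{1}(\R)$. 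Density being clear, the first representation theorem then yields a unique $m$-sectorial operator $\mathscr{L}_{\alpha}$ satisfying \eqref{Laxmilgram eq Interaction}, whose resolvent set is nonempty since it contains the complement of the sector.

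For the numerical range and the vertex in part (2), I would split $Q_{\alpha}(u,u)$ into real and imaginary parts for $\Vert u\Vert=1$, write $p=\Vert u\Vert_{L^2(\R_{+})}^{2}$, $m=\Vert u\Vert_{L^2(\R_{-})}^{2}$ with $p+m=1$, and estimate
\begin{align*}
\Re Q_{\alpha}(u,u)-\abs{\Im Q_{\alpha}(u,u)}
&\geq \Vert u'\Vert^{2}+\min\{\Re V_{+},\Re V_{-}\}-\max\{\abs{\Im V_{+}},\abs{\Im V_{-}}\}\\
&\quad-\big(\abs{\Re\alpha}+\abs{\Im\alpha}\big)\abs{u(0)}^{2}.
\end{align*}
Inserting $\abs{u(0)}^{2}\leq 2\Vert u'\Vert$ and applying the inequality $2\Vert u'\Vert\big(\abs{\Re\alpha}+\abs{\Im\alpha}\big)\leq\Vert u'\Vert^{2}+\big(\abs{\Re\alpha}+\abs{\Im\alpha}\big)^{2}$ makes the $\Vert u'\Vert^{2}$ contributions cancel and leaves exactly the constant $C_{V,\alpha}$ of \eqref{Vertex}; hence $\textup{Num}(\mathscr{L}_{\alpha})\subset S_{C_{V,\alpha},\frac{\pi}{4}}$ and $\mathscr{L}_{\alpha}$ is $m$-sectorial.

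To identify the domain and action in part (1), I would first test $Q_{\alpha}(u,v)=\langle f,v\rangle$ against $v\in C_{c}^{\infty}(\R\setminus\{0\})$: the boundary term drops, giving $-u''+V(x)u=f$ on each half-line in the distributional sense, whence $u''\in L^{2}(\R\setminus\{0\})$, i.e. $u\in H^{2}(\R\setminus\{0\})$, while $u\in H^{1}(\R)$ keeps $u$ continuous at $0$. Integrating by parts on $\R_{\pm}$ for a general $v\in H^{1}(\R)$ then produces $Q_{\alpha}(u,v)=\langle -u''+Vu,v\rangle+\big[u'(0^{-})-u'(0^{+})+\alpha u(0)\big]\overline{v(0)}$, so that letting $v(0)$ range freely forces the jump relation $u'(0^{+})-u'(0^{-})=\alpha u(0)$; running the identity backwards gives the converse inclusion. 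The adjoint in part (3) comes from the associated form $\overline{Q_{\alpha}(v,u)}$, which carries the coupling $\overline{\alpha}$ and the potential $\overline{V}$, yielding $\mathscr{L}_{\alpha}^{*}=-\tfrac{\dd^{2}}{\dd x^{2}}+\overline{V}$ with jump $u'(0^{+})-u'(0^{-})=\overline{\alpha}u(0)$.

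Finally, the classification (a)--(e) is a comparison of $\mathscr{L}_{\alpha}$ with $\mathscr{L}_{\alpha}^{*}$: equality of the two domains is controlled by the two jump constants and so imposes the condition on $\alpha$, while the conditions on $V_{\pm}$ come from matching the actions. For normality I would evaluate $\Vert\mathscr{L}_{\alpha}u\Vert^{2}-\Vert\mathscr{L}_{\alpha}^{*}u\Vert^{2}$ and reduce it by integration by parts to the boundary expression $-4\Im V_{+}\,\Im\big(u(0)\overline{u'(0^{+})}\big)+4\Im V_{-}\,\Im\big(u(0)\overline{u'(0^{-})}\big)$, then substitute the jump relation; for $\mathcal{P}$- and $\mathcal{PT}$-symmetry I would form the reflected (respectively reflected-conjugated) operator and match both the swapped potential $V(-x)$ against $\overline{V}$ and the transformed coupling. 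The step I expect to be most delicate is the sectoriality estimate of the second paragraph: one must calibrate the splitting in the trace inequality so that the $\Vert u'\Vert^{2}$ terms cancel exactly and reproduce the sharp vertex $C_{V,\alpha}$, since any cruder bound yields $m$-sectoriality only with a non-optimal vertex.
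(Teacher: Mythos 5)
Your construction and most of the classification follow the paper's own route (Appendix \ref{Appendix L a}) with only cosmetic differences: where the paper verifies coercivity of the shifted form $Q_{\alpha}+C_{\Re \alpha,\Re V}\langle\cdot,\cdot\rangle$ and invokes Lax--Milgram, you invoke Kato's first representation theorem for the closed sectorial form; and your AM--GM step $2\Vert u'\Vert\left(|\Re\alpha|+|\Im\alpha|\right)\leq\Vert u'\Vert^{2}+\left(|\Re\alpha|+|\Im\alpha|\right)^{2}$ is exactly the paper's choice $\varepsilon=(|\Re\alpha|+|\Im\alpha|)^{-1}$ in \eqref{Inequality}, so both reproduce the vertex \eqref{Vertex}. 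The identification of the domain via testing on $C_{c}^{\infty}(\R\setminus\{0\})$, the integration by parts producing the jump condition, and the adjoint via the conjugate form are argued identically to the paper.

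The genuine problem is in the normality step, and it is not a flaw of your reduction but a conflict between your (correct) reduction and item (3)(a) as stated. Your boundary expression, rewritten with $\Im\left(u(0)\overline{u'(0^{\pm})}\right)=-\Im\left(u'(0^{\pm})\overline{u(0)}\right)$, reads
\[
\Vert\mathscr{L}_{\alpha}u\Vert^{2}-\Vert\mathscr{L}_{\alpha}^{*}u\Vert^{2}
=4\left(\Im V_{+}\,\Im\left(u'(0^{+})\overline{u(0)}\right)-\Im V_{-}\,\Im\left(u'(0^{-})\overline{u(0)}\right)\right),
\]
which correctly keeps the imaginary part, consistently with \eqref{Normality Eq}. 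Domain equality forces $\alpha\in\R$; substituting $u'(0^{+})=u'(0^{-})+\alpha u(0)$ with $\alpha$ real, the $\alpha$-term contributes $4\alpha\Im V_{+}\,\Im\left(|u(0)|^{2}\right)=0$, and the difference collapses to $4\left(\Im V_{+}-\Im V_{-}\right)\Im\left(u'(0^{-})\overline{u(0)}\right)$. Hence your route yields: $\mathscr{L}_{\alpha}$ is normal if and only if $\alpha\in\R$ and $\Im V_{+}=\Im V_{-}$; in particular it \emph{is} normal when $\Im V_{+}=\Im V_{-}\neq 0$ and $\alpha\in\R\setminus\{0\}$, contradicting the proposition. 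This is confirmed independently: for $V\equiv \Re V+im$ with $m\neq 0$ and real $\alpha$, one has $\mathscr{L}_{\alpha}=T+im$ where $T$ is the self-adjoint real $\delta$-interaction, and $T+im$ satisfies $\Vert (T\pm im)u\Vert^{2}=\Vert Tu\Vert^{2}+m^{2}\Vert u\Vert^{2}$ on the common domain, hence is normal. The paper reaches its stronger conclusion only because \eqref{Normality Interaction} is written without the imaginary part, demanding $\left(\Im V_{+}f'(0^{+})-\Im V_{-}f'(0^{-})\right)\overline{f(0)}=0$ instead of its imaginary part vanishing, which is inconsistent with the computation \eqref{Normality Eq} it is derived from; with the $\Im$ restored, the paper's Case 3 condition becomes $\Im\left(\alpha|f(0)|^{2}\right)=0$, automatic for $\alpha\in\R$. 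So you face a fork: either replicate that unjustified dropped $\Im$ to recover the stated claim, or carry your computation through honestly and obtain the corrected classification, which differs from item (3)(a). (Your case $\Im V_{+}\neq\Im V_{-}$ is unaffected, non-normality persists, but the witness must satisfy $\Im\left(u'(0^{-})\overline{u(0)}\right)\neq 0$, which the paper's real-exponent test function does not.)
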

In the light of this proposition, it is clear that the complex number $\alpha$ which appears in the action of quadratic form $Q_{\alpha}$ does not appear in the action of $\mathscr{L}_{\alpha}$ but rather enters the domain of the operator. It is amazing to notice that the conditions $\Im V_{+}=\Im V_{-}$ and $\alpha$ is real are not enough to say that the operator $\mathscr{L}_{\alpha}$ is normal. Furthermore, the study of the explicit numerical range of the operator $\mathscr{L}_{\alpha}$ constitutes an interesting open problem, even with a simple case $V_{+}=V_{-}=0$.

The main purpose of this subsection is to reproduce all the results for the operator $\mathscr{L}_{\alpha}$ that we achieved for $\mathscr{L}$ in the previous subsections. The first outcome that we want to obtain is the spectrum of the operator $\mathscr{L}_{\alpha}$. However, we do not need to find the resolvent set of $\mathscr{L}_{\alpha}$ to implies its spectrum as we did in subsection \ref{Subsec Resolvent L}. Instead, we can study $\sigma(\mathscr{L}_{\alpha})$ directly. To do that, we will show that some type of the essential spectra are stable under the interaction through the following lemma.
\begin{lemma}\label{Lem Essential Spectra delta}
Let $V_{\pm}\in \C$, $\alpha \in \C$ and $\mathscr{L}_{\alpha}$ be the operator defined as in Proposition \ref{Prop Property delta}. Then, the first three essential spectra of $\mathscr{L}$ and $\mathscr{L}_{\alpha}$ are identical:
\[ \sigma_{\textup{ek}}(\mathscr{L}) = \sigma_{\textup{ek}}(\mathscr{L}_{\alpha}) \qquad \text{ for }\textup{k}\in \{1,2,3\}.\]
As a consequence, we have
\[ \sigma_{\textup{ek}}(\mathscr{L}_{\alpha}) = [V_{+}, +\infty) \cup [V_{-},+\infty),\qquad \text{ for } \textup{k}\in \{1,2,3,4,5\}.\]
\end{lemma}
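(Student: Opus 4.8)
The plan is to realise $\mathscr{L}_{\alpha}$ as a rank-one perturbation of $\mathscr{L}$ in the resolvent sense and then to quote the classical invariance of the first three essential spectra under compact resolvent perturbations. First I would secure a common base point: since both operators are $m$-sectorial (Proposition \ref{Prop Property} and Proposition \ref{Prop Property delta}), any $z_{0}$ with $\Re z_{0}$ sufficiently negative lies outside both numerical ranges, hence in $\rho(\mathscr{L})\cap\rho(\mathscr{L}_{\alpha})$; in particular this intersection is nonempty, which is the standing hypothesis of the perturbation theorems.

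The heart of the matter is a Krein-type resolvent identity. The two operators share the action $-u''+V(x)u$ and differ only through the boundary data at the origin, which at the level of forms is the clean relation $Q_{\alpha}(u,v)-Q(u,v)=\alpha\,u(0)\overline{v(0)}$. Because the trace functional $u\mapsto u(0)$ is bounded on $H^{1}(\R)=\textup{Dom}(Q)$ (but not on $L^{2}(\R)$, which is exactly why this is a genuine form perturbation), the perturbation is a rank-one form perturbation. Setting $g_{z}\coloneqq(\mathscr{L}-z)^{-1}\delta_{0}$, which by Proposition \ref{Prop Resolvent} is the function $x\mapsto\mathcal{R}_{z}(x,0)$ and therefore belongs to $L^{2}(\R)$ by its exponential decay, I would derive
\[
(\mathscr{L}_{\alpha}-z)^{-1}-(\mathscr{L}-z)^{-1}=c(z)\,\langle\,\cdot\,,h_{z}\rangle\,g_{z},\qquad z\in\rho(\mathscr{L})\cap\rho(\mathscr{L}_{\alpha}),
\]
for a suitable scalar $c(z)$ and a second $L^{2}$-function $h_{z}$ built from the resolvent kernel of the adjoint. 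The only fact I actually need to read off is that the right-hand side is rank one, hence compact.

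With the compactness of the resolvent difference established, I would invoke the stability theorem for the essential spectra \cite{Edmunds-Evans18}: if two closed operators with a common resolvent point have a compact resolvent difference, then $\sigma_{\textup{ek}}$ coincide for $\textup{k}\in\{1,2,3\}$. This gives $\sigma_{\textup{ek}}(\mathscr{L})=\sigma_{\textup{ek}}(\mathscr{L}_{\alpha})$ for $\textup{k}\in\{1,2,3\}$, and combining with Theorem \ref{Theo Spectrum of L} yields $\sigma_{\textup{ek}}(\mathscr{L}_{\alpha})=[V_{+},+\infty)\cup[V_{-},+\infty)$ for $\textup{k}\in\{1,2,3\}$. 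To reach $\textup{k}\in\{4,5\}$ I would use the nesting $\sigma_{\textup{e1}}\subseteq\cdots\subseteq\sigma_{\textup{e5}}$ together with a connectedness argument: the complement $\C\setminus\bigl([V_{+},+\infty)\cup[V_{-},+\infty)\bigr)=\C\setminus\sigma_{\textup{e3}}(\mathscr{L}_{\alpha})$ is a single connected open set, on which $\mathscr{L}_{\alpha}-\lambda$ is Fredholm with locally constant index; since this set meets $\rho(\mathscr{L}_{\alpha})$ (nonempty by Proposition \ref{Prop Property delta}), the index is identically zero there, forcing $\sigma_{\textup{e4}}(\mathscr{L}_{\alpha})=\sigma_{\textup{e3}}(\mathscr{L}_{\alpha})$ and, because the same component meets $\rho(\mathscr{L}_{\alpha})$, also $\sigma_{\textup{e5}}(\mathscr{L}_{\alpha})=\sigma_{\textup{e4}}(\mathscr{L}_{\alpha})$.

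The main obstacle will be the rigorous justification of the rank-one resolvent identity, and it has two delicate points. The first is giving precise meaning to $g_{z}=(\mathscr{L}-z)^{-1}\delta_{0}$ and to the form manipulations, since $\delta_{0}\notin L^{2}(\R)$ and the perturbation lives only at the form level; the clean way around this is to work entirely within the form domain $H^{1}(\R)$, where the trace is continuous, and to solve for the scalar $u(0)$. The second is verifying that the scalar factor $c(z)$ (equivalently, that $1+\alpha\,g_{z}(0)$ does not vanish) is well defined on the common resolvent set, which is exactly what makes $\mathscr{L}_{\alpha}-z$ boundedly invertible there. Everything else is a routine invocation of the cited stability result and the elementary topology of the complement of two horizontal rays.
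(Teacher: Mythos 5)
Your proposal is correct, but it takes a genuinely different route from the paper. The paper never computes or compares resolvents: it introduces a mediating operator $\mathscr{T}$, the common restriction of $\mathscr{L}$ and $\mathscr{L}_{\alpha}$ with domain $\{u\in H^1(\R)\cap H^2(\R):u(0)=u'(0)=0\}$, proves that $\mathscr{T}$ is closed and densely defined and that $\textup{Dom}(\mathscr{L})=\textup{Dom}(\mathscr{T})\dotplus\textup{span}\{\phi,x\phi\}$ and $\textup{Dom}(\mathscr{L}_{\alpha})=\textup{Dom}(\mathscr{T})\dotplus\textup{span}\{\phi_{\alpha},x\phi_{\alpha}\}$ for suitable cut-off functions, so that both operators are closed $2$-dimensional extensions of $\mathscr{T}$; the invariance of $\sigma_{\textup{ek}}$, $\textup{k}\in\{1,2,3\}$, then follows from the finite-extension result \cite[Cor. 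IX.4.2]{Edmunds-Evans18}, and the passage to $\textup{k}\in\{4,5\}$ via connectedness of the complement is the same in both arguments. Your rank-one Krein formula is indeed available here: subtracting the kernel of Proposition \ref{Prop Resolvent} from the kernel $\mathcal{R}_{z,\alpha}$ of $(\mathscr{L}_{\alpha}-z)^{-1}$ (written down later in the paper, in the proof of Theorem \ref{Theo Norm of resolvent delta}, but derivable there with no input from this lemma, so there is no circularity) gives exactly
\[
\mathcal{R}_{z,\alpha}(x,y)-\mathcal{R}_{z}(x,y)
=\frac{-\alpha\,g_{z}(x)\,g_{z}(y)}{\left(k_{+}(z)+k_{-}(z)+\alpha\right)\left(k_{+}(z)+k_{-}(z)\right)},
\qquad
g_{z}(x)\coloneqq e^{-k_{+}(z)x}\textbf{\textup{1}}_{\R_{+}}(x)+e^{k_{-}(z)x}\textbf{\textup{1}}_{\R_{-}}(x),
\]
that is, a rank-one operator of the form $c(z)\langle\cdot,\overline{g_{z}}\rangle g_{z}$, and your nonvanishing condition $1+\alpha g_{z}(0)\neq 0$ is precisely $(k_{+}(z)+k_{-}(z)+\alpha)/(k_{+}(z)+k_{-}(z))\neq 0$, which is exactly the invertibility condition characterizing $\rho(\mathscr{L}_{\alpha})$ off the two rays; a common resolvent point exists by $m$-sectoriality, as you note, so the hypotheses of the compact-resolvent-difference stability theorem in \cite[Chap. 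IX]{Edmunds-Evans18} are met. As for what each approach buys: the paper's argument is purely domain-theoretic, needs no resolvent formula at all (only the jump conditions defining the two domains), and is therefore the more robust template when resolvents are inaccessible; your argument requires either the explicit kernels or a careful form-level derivation of the Krein identity, but in return it produces the resolvent formula itself, which is of independent interest, and, since compact perturbations of bounded operators preserve semi-Fredholmness and the index, it yields the invariance of $\sigma_{\textup{e4}}$ directly rather than only through the final connectedness argument.
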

Now, we can describe the spectrum of the operator $\mathscr{L}_{\alpha}$ by the following theorem.
\begin{theorem}\label{Theo Spectrum L delta}
Let $V_{\pm}\in \C$, $\alpha \in \C$ and $\mathscr{L}_{\alpha}$ be the operator defined as in Proposition \ref{Prop Property delta}. Then, the spectrum of $\mathscr{L}_{\alpha}$ is composed of point spectrum and continuous spectrum,
\[ \sigma(\mathscr{L}_{\alpha})= \sigma_{\textup{c}}(\mathscr{L}_{\alpha}) \cup\sigma_{\textup{p}}(\mathscr{L}_{\alpha}),\]
in which, 
\begin{itemize}
\item the continuous spectrum is also the essential spectrum
\begin{equation}\label{Continuous Spec delta}
\sigma_{\textup{c}}(\mathscr{L}_{\alpha})= \sigma_{\textup{ek}}(\mathscr{L}_{\alpha}) = [V_{+}, +\infty) \cup [V_{-},+\infty),\qquad \text{ for } \textup{k}\in \{1,2,3,4,5\},
\end{equation}
\item the point spectrum is also the discrete spectrum
\begin{equation}
\sigma_{\textup{p}}(\mathscr{L}_{\alpha})= \sigma_{\textup{dis}}(\mathscr{L}_{\alpha})= \left\{\begin{aligned}
&\left\{ z(\alpha)\right\} \qquad &&\text{if } \alpha \in \Omega,\\
& \emptyset \qquad &&\text{if } \alpha \in \C \setminus \Omega,
\end{aligned}
 \right.
\end{equation}
where
\begin{align*}
z(\alpha)&\coloneqq \frac{V_{+}+V_{-}}{2}-\frac{(V_{+}-V_{-})^2}{4\alpha^2}-\frac{\alpha^2}{4},\\
\Omega &\coloneqq \left\{\alpha \in \C :\left\vert \langle V_{+}-V_{-}, \alpha \rangle_{\R^2}\right\vert< -|\alpha|^2 \Re \alpha \right\}.
\end{align*}
\end{itemize}
When $\alpha\in \Omega$, the eigenspace of $z(\alpha)$ is given by
\[ \textup{Ker}(\mathscr{L}_{\alpha}-z(\alpha)) = \textup{span} \left\{ u_{\alpha}(x) \right\},\]
where
\begin{equation}\label{Eigenfunction delta 0}
u_{\alpha}(x) \coloneqq  \left\{\begin{aligned}
& e^{\left(\frac{\alpha}{2}+\frac{V_{+}-V_{-}}{2\alpha} \right) x}, \qquad &&\text{ for }x\geq 0,\\
& e^{-\left(\frac{\alpha}{2}-\frac{V_{+}-V_{-}}{2\alpha}\right)x}, \qquad &&\text{ for }x\leq 0.
\end{aligned} \right.
\end{equation}
\end{theorem}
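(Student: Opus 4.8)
The plan is to keep the essential spectrum supplied by Lemma \ref{Lem Essential Spectra delta}, to rule out a residual spectrum by abstract symmetry, and to locate the point spectrum by solving the eigenvalue ODE explicitly, the delicate point being a branch-of-the-square-root analysis.

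First I would show $\sigma_{\textup{r}}(\mathscr{L}_{\alpha})=\emptyset$. Since $\mathscr{L}_{\alpha}$ is $\mathcal{T}$-self-adjoint by Proposition \ref{Prop Property delta}, one has $\mathscr{L}_{\alpha}^{*}-\bar z=\mathcal{T}(\mathscr{L}_{\alpha}-z)\mathcal{T}$, so $\mathscr{L}_{\alpha}-z$ is injective precisely when $\mathscr{L}_{\alpha}^{*}-\bar z$ is. If $z\in\sigma_{\textup{r}}(\mathscr{L}_{\alpha})$ then $\bar z\in\sigma_{\textup{p}}(\mathscr{L}_{\alpha}^{*})$, whence $z\in\sigma_{\textup{p}}(\mathscr{L}_{\alpha})$, contradicting the disjointness of $\sigma_{\textup{p}}$ and $\sigma_{\textup{r}}$. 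Together with the identity $\sigma(\mathscr{L}_{\alpha})=\sigma_{\textup{e5}}(\mathscr{L}_{\alpha})\cup\sigma_{\textup{dis}}(\mathscr{L}_{\alpha})$ and Lemma \ref{Lem Essential Spectra delta}, the whole theorem reduces to computing the eigenvalues.

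For $z\notin[V_{+},+\infty)\cup[V_{-},+\infty)$ one has $\Re k_{\pm}(z)>0$, so the only $L^2$-solutions of $-u''+V_{\pm}u=zu$ on the half-lines are multiples of $e^{-k_{+}(z)x}$ on $\R_{+}$ and $e^{k_{-}(z)x}$ on $\R_{-}$. Continuity at $0$ forces a common coefficient, and the interface condition $u'(0^{+})-u'(0^{-})=\alpha u(0)$ built into $\textup{Dom}(\mathscr{L}_{\alpha})$ collapses, for a nontrivial eigenfunction, to the scalar equation $k_{+}(z)+k_{-}(z)=-\alpha$. Combining this with the identity $k_{+}(z)^2-k_{-}(z)^2=V_{+}-V_{-}$ (valid for every $z$) and $\alpha\neq0$ yields $k_{+}-k_{-}=-(V_{+}-V_{-})/\alpha$, so that $k_{\pm}=-\frac{\alpha}{2}\mp\frac{V_{+}-V_{-}}{2\alpha}$ and $z=V_{+}-k_{+}^2=z(\alpha)$. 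This already identifies $z(\alpha)$ as the unique candidate eigenvalue and reads off $u_{\alpha}$ through $-k_{+}=\frac{\alpha}{2}+\frac{V_{+}-V_{-}}{2\alpha}$ and $k_{-}=-\frac{\alpha}{2}+\frac{V_{+}-V_{-}}{2\alpha}$.

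The crux, and the step I expect to be the main obstacle, is to decide when this candidate is an honest eigenvalue, which is a question of branches. The numbers $\tilde k_{\pm}\coloneqq-\frac{\alpha}{2}\mp\frac{V_{+}-V_{-}}{2\alpha}$ satisfy $\tilde k_{\pm}^2=V_{\pm}-z(\alpha)$, but they coincide with the principal roots $k_{\pm}(z(\alpha))$ (the ones giving decay) only when $\Re\tilde k_{\pm}>0$; if either real part is nonpositive, the $L^2$-solution is carried by $-\tilde k_{\pm}$ and the interface equation fails. Using $\Re\!\left(\frac{V_{+}-V_{-}}{\alpha}\right)=\frac{\langle V_{+}-V_{-},\alpha\rangle_{\R^2}}{|\alpha|^2}$, the two requirements $\Re\tilde k_{\pm}>0$ read $-|\alpha|^2\Re\alpha>\pm\langle V_{+}-V_{-},\alpha\rangle_{\R^2}$, i.e., together $-|\alpha|^2\Re\alpha>|\langle V_{+}-V_{-},\alpha\rangle_{\R^2}|$, which is exactly the condition $\alpha\in\Omega$. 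When $\alpha\in\Omega$, strict positivity of $\Re k_{\pm}(z(\alpha))$ also places $z(\alpha)$ off both rays, so $z(\alpha)\notin\sigma_{\textup{e5}}(\mathscr{L}_{\alpha})$ and therefore $z(\alpha)\in\sigma_{\textup{dis}}(\mathscr{L}_{\alpha})$ with one-dimensional kernel $\textup{span}\{u_{\alpha}\}$; when $\alpha\in\C\setminus\Omega$ the candidate fails and $\sigma_{\textup{p}}(\mathscr{L}_{\alpha})=\emptyset$ (for $\alpha=0$ the equation $k_{+}+k_{-}=0$ has no root with $\Re k_{\pm}>0$, consistent with $0\notin\Omega$). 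Finally, no eigenvalue sits on $[V_{+},+\infty)\cup[V_{-},+\infty)$, since there at least one $k_{\pm}$ is purely imaginary and the corresponding half-line admits no $L^2$-solution; hence the rays carry no point spectrum, giving $\sigma_{\textup{c}}(\mathscr{L}_{\alpha})=[V_{+},+\infty)\cup[V_{-},+\infty)$ and the stated decomposition.
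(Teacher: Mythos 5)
Your proposal is correct and follows essentially the same route as the paper: stability of the essential spectrum via Lemma \ref{Lem Essential Spectra delta}, emptiness of the residual spectrum from $\mathcal{T}$-self-adjointness, reduction of the eigenvalue problem to the scalar equation $k_{+}(z)+k_{-}(z)=-\alpha$, and the branch analysis identifying when the algebraic candidates $\tilde k_{\pm}=-\frac{\alpha}{2}\mp\frac{V_{+}-V_{-}}{2\alpha}$ are the principal (decaying) roots, i.e.\ $\Re \tilde k_{\pm}>0$. This is exactly the paper's chain of equivalences \eqref{Eig Equation alpha}--\eqref{Eig Equation alpha 3}, with your explicit computation $\Re\left(\frac{V_{+}-V_{-}}{\alpha}\right)=\frac{\langle V_{+}-V_{-},\alpha\rangle_{\R^2}}{|\alpha|^2}$ filling in what the paper calls ``elementary calculations.''
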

According to Theorem \ref{Theo Spectrum L delta}, there is some $\alpha\in \C$ at which the spectra of $\mathscr{L}$ and $\mathscr{L}_{\alpha}$ are the same, and there is some $\alpha\in \C$ at which they differ. The latter happens when $\alpha$ belongs to the set $\Omega$, which stays on the left of the imaginary axis in complex plane and whose shape depends on the difference $V_{+}-V_{-}$. 
\begin{remark}
In Figure \ref{Fig:Region Omega}, some shapes of $\Omega$ corresponding to $V_{+}-V_{-}$ are represented. In particular, in the case $V_{+}-V_{-}=2i$, we disprove wrong statements in \cite[Proposition 7.1 and Figure 3]{Henry-Krejcirik17} that the domain $\C\setminus \Omega$ is a curve in the complex plane. In any cases, the set $\Omega$ and $\C\setminus \Omega$ must be $2$-dimension areas in $\C$. Obviously, the case $V_{+}-V_{-}=0$ produces the largest region $\Omega$, that is $\Omega=\{z\in \C: \Re z<0\}$, in all circumstances corresponding to the values of $V_{+}-V_{-}$.
\end{remark}
\begin{figure*}[ht!]
     \centering
     \begin{subfigure}[c]{0.45\textwidth}
         \centering
         \includegraphics[width=\textwidth]{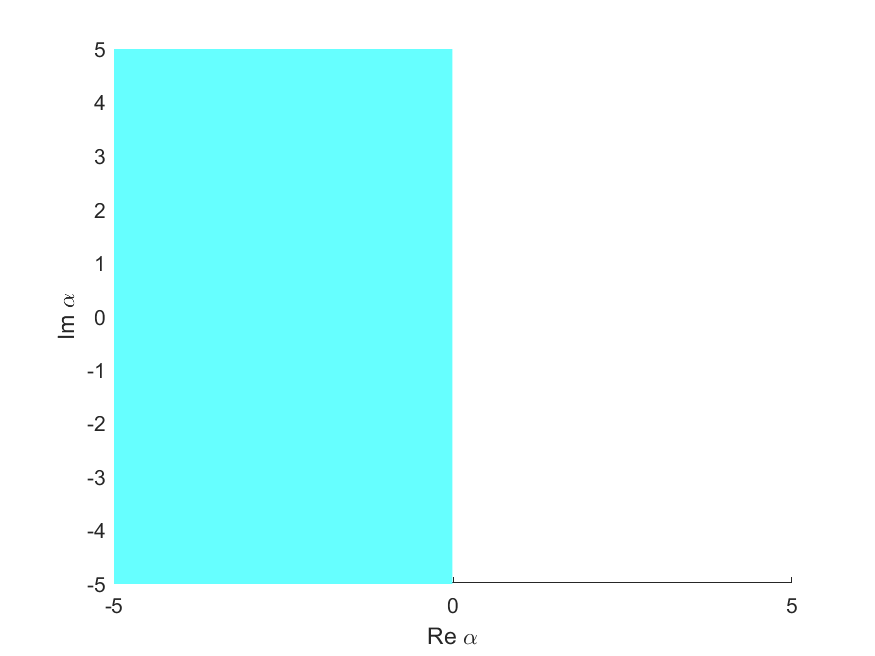}
         \caption{$V_{+}-V_{-}=0$.}
         \label{fig:Case 1}
     \end{subfigure}
     \hfill
     \begin{subfigure}[c]{0.45\textwidth}
         \centering
         \includegraphics[width=\textwidth]{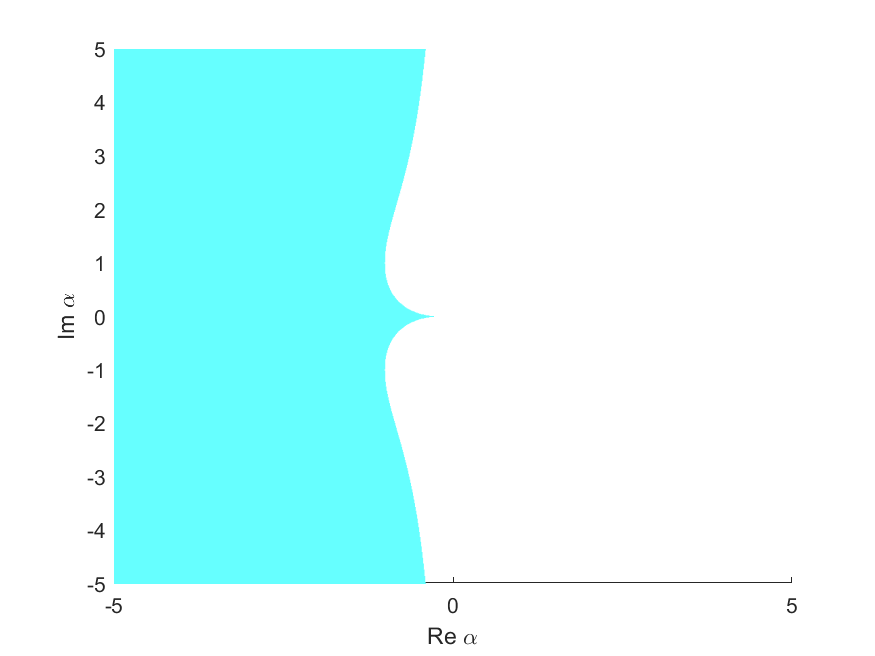}
         \caption{$V_{+}-V_{-}=2i$.}\label{SubFig: Omega}
         \label{fig:Case 2}
     \end{subfigure}
     \hfill
     \begin{subfigure}[c]{0.45\textwidth}
         \centering
         \includegraphics[width=\textwidth]{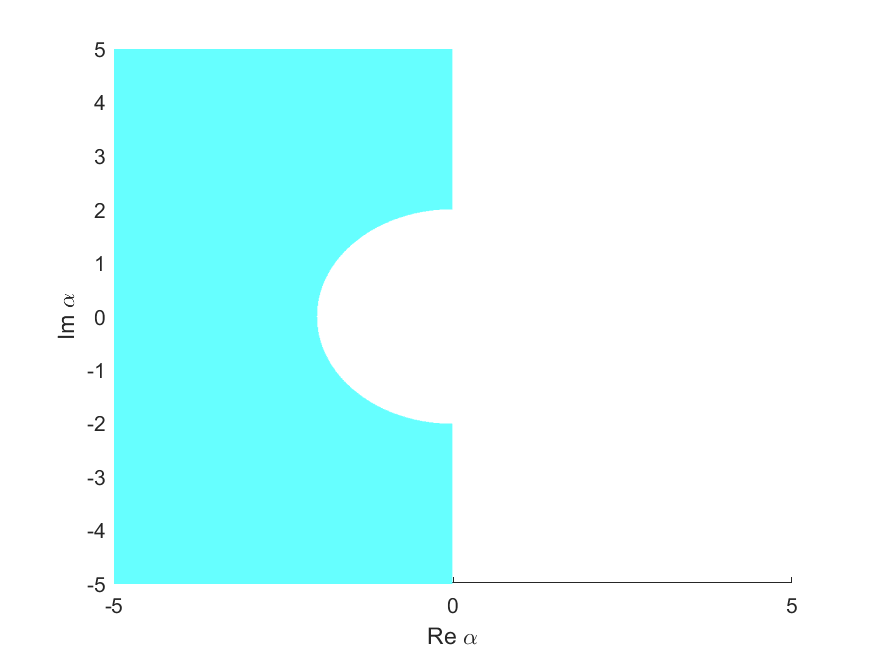}
         \caption{$V_{+}-V_{-}=4$.}
         \label{fig:Case 3}
     \end{subfigure}
     \hfill
     \begin{subfigure}[c]{0.45\textwidth}
         \centering
         \includegraphics[width=\textwidth]{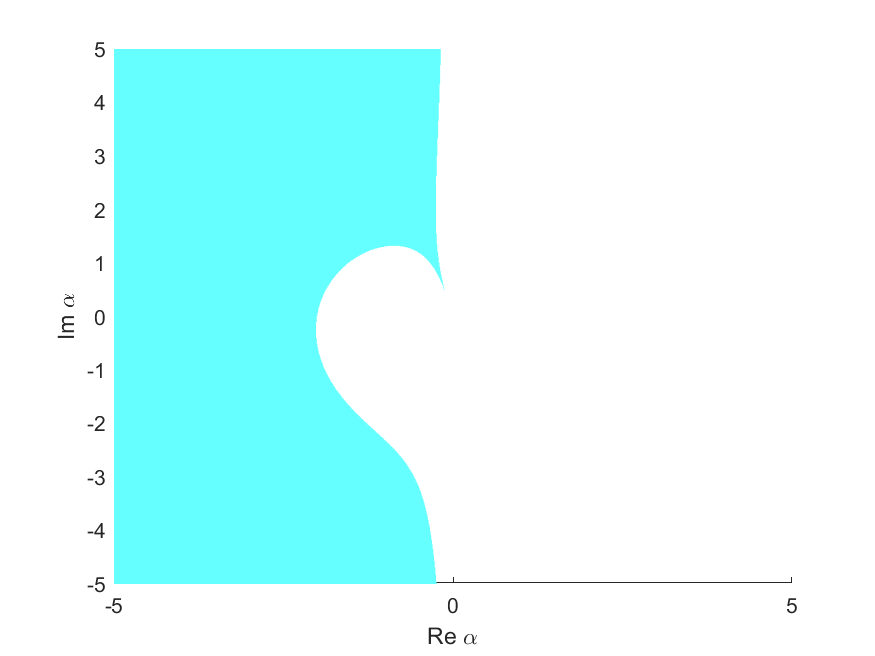}
         \caption{$V_{+}-V_{-}=4+i$.}
         \label{fig:Case 4}
     \end{subfigure}
        \caption{Illustrations of the set $\Omega$ in the complex plane, at which the eigenvalue of $\mathscr{L}_{\alpha}$ exists, corresponding to the values of $V_{+}-V_{-}$.}\label{Fig:Region Omega}
\end{figure*}
By applying the same method that we used to calculate the asymptotic behavior of the resolvent norm of $\mathscr{L}$ in Theorem \ref{Theo Norm of resolvent}, we deduce the following statement.
\begin{theorem}\label{Theo Norm of resolvent delta}
Let $V_{\pm}\in \C$ such that $\Im V_{+} \neq \Im V_{-}$ and let $\alpha\in \C\setminus \{0\}$ and $\mathscr{L}_{\alpha}$ be the operator defined as in Proposition \ref{Prop Property delta}. Let $z\in \C$ such that $\Im z\in |(\Im V_{+},\Im V_{-})|$, then 
\begin{equation*}
\left\Vert (\mathscr{L}_{\alpha}-z)^{-1}\right\Vert = \frac{|\Im V_{+}-\Im V_{-}|\sqrt{\Re z}}{|\alpha||\Im V_{+}-\Im z||\Im V_{-}-\Im z|}\left(1+\mathcal{O}\left(\frac{1}{|\Re z|^{1/2}}\right)\right).
\end{equation*}
as $\Re z\to +\infty$ and uniformly for all $\Im z\in |(\Im V_{+},\Im V_{-})|$.
\end{theorem}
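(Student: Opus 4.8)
The plan is to follow the strategy of Theorem~\ref{Theo Norm of resolvent}: write the resolvent of $\mathscr{L}_{\alpha}$ explicitly as a rank-one perturbation of the resolvent of $\mathscr{L}$ supplied by Proposition~\ref{Prop Resolvent}, isolate the single rank-one operator responsible for the blow-up, and compute its norm exactly. Since $\alpha\delta_{0}$ is a form-bounded rank-one perturbation, the Krein-type resolvent formula gives, whenever $z\in\rho(\mathscr{L})$ and $1+\alpha\,\mathcal{R}_{z}(0,0)\neq 0$,
\[
\mathcal{R}_{z}^{\alpha}(x,y)=\mathcal{R}_{z}(x,y)-\frac{\alpha\,\mathcal{R}_{z}(x,0)\,\mathcal{R}_{z}(0,y)}{1+\alpha\,\mathcal{R}_{z}(0,0)}.
\]
Reading off Proposition~\ref{Prop Resolvent}, and writing $s:=k_{+}(z)+k_{-}(z)$ with $k_{\pm}(z)$ as in \eqref{kpkn}, one gets $\mathcal{R}_{z}(0,0)=s^{-1}$ and $\mathcal{R}_{z}(x,0)=s^{-1}g(x)$, where $g(x):=e^{-k_{+}(z)x}\mathbf{1}_{\R_{+}}(x)+e^{k_{-}(z)x}\mathbf{1}_{\R_{-}}(x)$. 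The denominator is $1+\alpha/s=(s+\alpha)/s$; I first check it never vanishes in the regime at hand, since $s\to 0$ as $\Re z\to+\infty$ forces $s+\alpha\to\alpha\neq 0$, which simultaneously certifies $z\in\rho(\mathscr{L}_{\alpha})$.

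The crux is an exact algebraic cancellation. Substituting the above, the perturbation subtracts $\frac{\alpha}{(s+\alpha)s}\,g(x)g(y)$ from $\mathcal{R}_{z}$. On the two off-diagonal quadrants the original kernel is already $\frac{1}{s}g(x)g(y)$, and $\frac{1}{s}-\frac{\alpha}{(s+\alpha)s}=\frac{1}{s+\alpha}$; on the two diagonal quadrants, using $\frac{k_{+}(z)-k_{-}(z)}{2k_{+}(z)}-1=-\frac{s}{2k_{+}(z)}$ and its mirror, the separable parts recombine with coefficients $\frac{1}{s+\alpha}-\frac{1}{2k_{\pm}(z)}$. Hence
\[
\mathcal{R}_{z}^{\alpha}=T+E,\qquad T:=\tfrac{1}{s+\alpha}\,\big(\text{operator with kernel }g(x)g(y)\big),
\]
where $E$ collects the two half-line free-resolvent pieces $\frac{1}{2k_{\pm}(z)}e^{-k_{\pm}(z)|x-y|}$ and two single-quadrant rank-one remainders of coefficient $\frac{1}{2k_{\pm}(z)}$. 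The point is that the dangerous $\frac{1}{s}\sim\sqrt{\Re z}$ cross terms, which drive the $\Re z$ growth in Theorem~\ref{Theo Norm of resolvent}, are now replaced by the bounded factor $\frac{1}{s+\alpha}\to\frac{1}{\alpha}$.

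It remains to estimate the two pieces. Being rank one, $T$ satisfies $\Vert T\Vert=\frac{1}{|s+\alpha|}\Vert g\Vert^{2}$ exactly. For the asymptotics I expand $k_{\pm}(z)=\sqrt{V_{\pm}-z}$ as $\Re z\to+\infty$: because $\Im z$ lies strictly between $\Im V_{+}$ and $\Im V_{-}$, the numbers $V_{\pm}-z$ sit on opposite sides of the negative real axis, so on the principal branch $\Im k_{+}(z)$ and $\Im k_{-}(z)$ have opposite signs while $\Re k_{\pm}(z)=\frac{|\Im V_{\pm}-\Im z|}{2\sqrt{\Re z}}\big(1+\mathcal{O}(\tfrac{1}{\Re z})\big)$; in particular the imaginary parts of $s$ cancel and $|s|=\mathcal{O}(\Re z^{-1/2})$. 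Then $\Vert g\Vert^{2}=\frac{1}{2\Re k_{+}(z)}+\frac{1}{2\Re k_{-}(z)}=\frac{|\Im V_{+}-\Im V_{-}|\sqrt{\Re z}}{|\Im V_{+}-\Im z||\Im V_{-}-\Im z|}\big(1+\mathcal{O}(\tfrac{1}{\Re z})\big)$, using $|\Im V_{+}-\Im z|+|\Im V_{-}-\Im z|=|\Im V_{+}-\Im V_{-}|$, and combined with $|s+\alpha|=|\alpha|(1+\mathcal{O}(\Re z^{-1/2}))$ this makes $\Vert T\Vert$ equal to the claimed right-hand side. Each constituent of $E$ is either a half-line free resolvent or a rank-one operator of coefficient $\frac{1}{2|k_{\pm}(z)|}=\mathcal{O}(\Re z^{-1/2})$ acting on a function of squared norm $\mathcal{O}(\sqrt{\Re z})$, hence $\Vert E\Vert=\mathcal{O}(1)$. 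Since $\Vert T\Vert\sim\sqrt{\Re z}$, the triangle inequality on both sides yields $\Vert\mathcal{R}_{z}^{\alpha}\Vert=\Vert T\Vert\big(1+\mathcal{O}(\Re z^{-1/2})\big)$, which is the assertion.

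I expect the difficulty to be two-fold. Conceptually, the heart is spotting and proving the cancellation $\frac{1}{s}-\frac{\alpha}{(s+\alpha)s}=\frac{1}{s+\alpha}$, i.e. recognizing that the rank-one correction kills the leading $\sqrt{\Re z}$ cross terms and thereby lowers the divergence rate from $\Re z$ (for $\mathscr{L}$) to $\sqrt{\Re z}$ (for $\mathscr{L}_{\alpha}$). Technically, the delicate points are the branch-cut analysis that makes $s$ small (the opposite signs of $\Im k_{\pm}(z)$) and keeping $\Vert E\Vert=\mathcal{O}(1)$ together with the expansions uniform in $\Im z$ on compact subsets of $|(\Im V_{+},\Im V_{-})|$; near the endpoints the remainder bounds degrade exactly as the main term does, so the relative error stays $\mathcal{O}(\Re z^{-1/2})$.
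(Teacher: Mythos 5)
Your proposal is correct, and it reaches the stated asymptotics by a genuinely different route than the paper. The paper never invokes a Krein formula: it re-solves the ODE with the jump condition to write the kernel $\mathcal{R}_{z,\alpha}$ explicitly, splits it as $\mathcal{R}_{1,z,\alpha}+\mathcal{R}_{2,z,\alpha}$ with $\mathcal{R}_{1,z,\alpha}$ carrying \emph{all} separable terms, and — because the coefficients $K_{++}(z,\alpha)$, $K_{--}(z,\alpha)$, $K_{+-}(z,\alpha)$ differ across quadrants, so $\mathcal{R}_{1,z,\alpha}$ is rank two — its norm must be squeezed between the two-sided bounds of the analogue of Proposition \ref{Prop Norm R1}, built on the optimization Lemma \ref{Lem Optimize}, followed by asymptotics of the five quantities $A,B,C,D,\widetilde{B}$. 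You instead regroup so that the singular part has the \emph{same} coefficient in all four quadrants: your identities $\frac{1}{s}-\frac{\alpha}{s(s+\alpha)}=\frac{1}{s+\alpha}$ and $\frac{k_{+}-k_{-}}{2k_{+}s}=\frac{1}{s}-\frac{1}{2k_{+}}$ (with $s=k_{+}(z)+k_{-}(z)$) show that the kernel equals $\frac{1}{s+\alpha}g(x)g(y)$ plus the two half-line Dirichlet resolvents, and one can check this regrouped kernel coincides with the paper's $\mathcal{R}_{z,\alpha}$, e.g. on $\{x>0,y>0\}$ via $\frac{1}{s+\alpha}-\frac{1}{2k_{+}}=\frac{k_{+}-k_{-}-\alpha}{2k_{+}(s+\alpha)}$. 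Since $T$ is exactly rank one, $\Vert T\Vert=\Vert g\Vert^{2}/|s+\alpha|$ is an identity, so no two-sided estimate or test-function construction is needed; your route is shorter and makes the mechanism transparent (the factor $1/s\sim 2\sqrt{\Re z}/|V_{+}-V_{-}|$ that drives Theorem \ref{Theo Norm of resolvent} is regularized to $1/(s+\alpha)\to 1/\alpha$, which is exactly why the rate drops from $\Re z$ to $\sqrt{\Re z}$). What the paper's route buys is that it never has to justify a resolvent formula for a non-self-adjoint operator with complex coupling: that is the one step you should still nail down, since for complex $V_{\pm}$ and complex $\alpha$ the Krein-type formula is not an off-the-shelf citation. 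The cleanest fix is either to verify directly that your regrouped kernel defines a bounded operator mapping into $\textup{Dom}(\mathscr{L}_{\alpha})$ (in particular that the jump condition $u'(0^{+})-u'(0^{-})=\alpha u(0)$ holds) and inverts $\mathscr{L}_{\alpha}-z$, or to note that it agrees with the kernel produced by the ODE construction of Proposition \ref{Prop Resolvent} adapted to the jump condition, which is what the paper does. Finally, your statement $\Vert E\Vert=\mathcal{O}(1)$ hides a constant of order $\frac{|\Im V_{+}-\Im V_{-}|}{|\Im V_{+}-\Im z||\Im V_{-}-\Im z|}$ which degenerates at the endpoints of $|(\Im V_{+},\Im V_{-})|$; as you correctly observe, this is harmless because $\Vert T\Vert$ carries the same factor, so the ratio $\Vert E\Vert/\Vert T\Vert$ is uniformly $\mathcal{O}(|\Re z|^{-1/2})$ — the same cancellation the paper exploits when comparing $\mathcal{R}_{2,z,\alpha}$ with $\mathcal{R}_{1,z,\alpha}$.
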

It is clear that the closer to zero $\alpha$ is, the faster the resolvent norm increases. Although the blowing-up rate of the resolvent norm in the case $\alpha\neq 0$ is less than in the case free of interaction, the pseudospectrum is still non-trivial when $\Im V(x)$ is not constant. We do not know about the asymptotic behavior of the resolvent norm of $\mathscr{L}_{\alpha}$ outside the region bounded by two essential spectrum lines, but we conjecture that it will be bounded when the spectral parameter $z$ moves parallel to this region and decay when $z$ moves far away from this region to infinity.

Our final result is devoted to the pseudomode construction for the complex point interaction operator $\mathscr{L}_{\alpha}$. Again, our method for Theorem \ref{Theo Pseudomode} is still applicable and  yields an optimal pseudomode for the operator $\mathscr{L}_{\alpha}$.
\begin{theorem}\label{Theo Pseudomode delta}
Let $V_{\pm}\in \C$ such that $\Im V_{+} \neq \Im V_{-}$ and let $\alpha\in \C\setminus \{0\}$ and $\mathscr{L}_{\alpha}$ be the operator defined as in Proposition \ref{Prop Property delta}. We define a function $\Psi_{z,\alpha}$, for $z\in \rho(\mathscr{L}_{\alpha})$, as follows
\begin{equation}\label{Pseudomode delta}
\Psi_{z,\alpha}(x) = \left( n_{1}(z,\alpha) e^{k_{-}(z)x}+n_{2}(z) e^{\overline{k_{-}(z)}x}\right)\textbf{\textup{1}}_{\R_{-}}(x)+\left(p_{1}(z,\alpha) e^{-k_{+}(z)x}+p_{2}(z) e^{-\overline{k_{+}(z)}x}\right)\textbf{\textup{1}}_{\R_{+}}(x),
\end{equation}
where $n_{1}(z,\alpha),p_{1}(z,\alpha)$ and $n_{2}(z),p_{2}(z)$ are given by
\begin{equation}\label{Constants delta}
\left\{
\begin{aligned}
&n_{1}(z,\alpha)=\frac{k_{+}(z)+\overline{k_{-}(z)}+\alpha}{k_{+}(z)+k_{-}(z)+\alpha}|\Im V_{+}-\Im z|+\frac{k_{+}(z)-\overline{k_{+}(z)}}{k_{+}(z)+k_{-}(z)+\alpha}|\Im V_{-}-\Im z|,\\
&p_{1}(z,\alpha)=-\frac{k_{-}(z)-\overline{k_{-}(z)}}{k_{+}(z)+k_{-}(z)+\alpha}|\Im V_{+}-\Im z|-\frac{\overline{k_{+}(z)}+k_{-}(z)+\alpha}{k_{+}(z)+k_{-}(z)+\alpha}|\Im V_{-}-\Im z|,\\
&n_{2}(z)=-|\Im V_{+}-\Im z|,\\
&p_{2}(z)=|\Im V_{-}-\Im z|.
\end{aligned}
\right.
\end{equation}
Then, $\Psi_{z,\alpha}\in \textup{Dom}(\mathscr{L}_{\alpha})$ for all $z\in \rho(\mathscr{L}_{\alpha})$ and it makes
\[ \frac{\Vert(\mathscr{L}_{\alpha}-z)\Psi_{z,\alpha}\Vert}{\Vert \Psi_{z,\alpha} \Vert} = \frac{|\alpha|}{|\Im V_{+}-\Im V_{-}|} \frac{|\Im V_{+}-\Im z||\Im V_{-}-\Im z|}{\sqrt{\Re z}} \left(1+\mathcal{O}\left( \frac{1}{|\Re z|^{1/2}}\right)\right) ,\]
as $\Re z \to +\infty$ and uniformly for all $\Im z\in |(\Im V_{+},\Im V_{-})|$.
\end{theorem}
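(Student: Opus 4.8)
The plan is to realize $\Psi_{z,\alpha}$ as a superposition of two kinds of building blocks on each half-line: the genuine decaying solutions $e^{k_{-}(z)x}$ (on $\R_{-}$) and $e^{-k_{+}(z)x}$ (on $\R_{+}$) of the eigenvalue equation $-u''+V(x)u=zu$, weighted by $n_{1}(z,\alpha)$ and $p_{1}(z,\alpha)$, plus the ``mismatched'' correctors $e^{\overline{k_{-}(z)}x}$ and $e^{-\overline{k_{+}(z)}x}$, weighted by $n_{2}(z)$ and $p_{2}(z)$. First I would verify that $\Psi_{z,\alpha}\in\textup{Dom}(\mathscr{L}_{\alpha})$. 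Since $\Re k_{\pm}(z)>0$ for $z\notin[V_{+},+\infty)\cup[V_{-},+\infty)$, each exponential lies in $H^{2}$ of the corresponding half-line, so the only thing to check is the pair of interface conditions defining $\textup{Dom}(\mathscr{L}_{\alpha})$: continuity $\Psi_{z,\alpha}(0^{-})=\Psi_{z,\alpha}(0^{+})$, i.e. $n_{1}+n_{2}=p_{1}+p_{2}$, and the jump $\Psi_{z,\alpha}'(0^{+})-\Psi_{z,\alpha}'(0^{-})=\alpha\Psi_{z,\alpha}(0)$. Substituting \eqref{Constants delta}, the continuity condition collapses to $n_{1}-p_{1}=|\Im V_{+}-\Im z|+|\Im V_{-}-\Im z|=|\Im V_{+}-\Im V_{-}|$ once the numerators telescope against the common denominator $k_{+}(z)+k_{-}(z)+\alpha$; the jump condition is the second linear equation that \eqref{Constants delta} is designed to solve, checked by the same direct substitution. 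This is precisely where the factor $\alpha$ enters the denominators, in contrast to \eqref{Constants}.

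Next I would compute the numerator exactly. Because $n_{1}e^{k_{-}(z)x}$ and $p_{1}e^{-k_{+}(z)x}$ annihilate $(\mathscr{L}_{\alpha}-z)$ on the respective half-lines, only the correctors survive, and using $\overline{k_{\pm}(z)}^{2}=\overline{V_{\pm}-z}$ one finds
\[ (\mathscr{L}_{\alpha}-z)\Psi_{z,\alpha}(x)=\begin{cases} 2i(\Im V_{+}-\Im z)|\Im V_{-}-\Im z|\,e^{-\overline{k_{+}(z)}x}, & x>0,\\ -2i(\Im V_{-}-\Im z)|\Im V_{+}-\Im z|\,e^{\overline{k_{-}(z)}x}, & x<0.\end{cases} \]
Integrating the squared modulus over each half-line with $\int_{0}^{\infty}e^{-2\Re k_{+}(z)x}\,\dd x=(2\Re k_{+}(z))^{-1}$ (and its mirror on $\R_{-}$) gives
\[ \Vert(\mathscr{L}_{\alpha}-z)\Psi_{z,\alpha}\Vert^{2}=2|\Im V_{+}-\Im z|^{2}|\Im V_{-}-\Im z|^{2}\left(\frac{1}{\Re k_{+}(z)}+\frac{1}{\Re k_{-}(z)}\right). \]
The asymptotics then rest on the elementary expansion of the principal root: writing $V_{\pm}-z=-(\Re z-\Re V_{\pm})+i(\Im V_{\pm}-\Im z)$ with $\Re z\to+\infty$ and bounded imaginary part, one obtains $\Re k_{\pm}(z)=\frac{|\Im V_{\pm}-\Im z|}{2\sqrt{\Re z}}(1+\mathcal{O}(|\Re z|^{-1}))$ and $\Im k_{\pm}(z)=\textup{sgn}(\Im V_{\pm}-\Im z)\sqrt{\Re z}(1+\mathcal{O}(|\Re z|^{-1}))$. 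Since $\Im z\in|(\Im V_{+},\Im V_{-})|$ forces $\Im V_{+}-\Im z$ and $\Im V_{-}-\Im z$ to have opposite signs, the imaginary parts of $k_{+}(z)$ and $k_{-}(z)$ cancel to leading order, so $k_{+}(z)+k_{-}(z)=\mathcal{O}(|\Re z|^{-1/2})$; this is the mechanism that makes the denominators $k_{+}(z)+k_{-}(z)+\alpha\to\alpha$.

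For the denominator norm I would feed these expansions into \eqref{Constants delta}. The combinations $k_{+}+\overline{k_{-}}$, $k_{+}-\overline{k_{+}}$, $\overline{k_{+}}+k_{-}$ and $k_{-}-\overline{k_{-}}$ all have leading part $\pm 2i\sqrt{\Re z}$, while $k_{+}+k_{-}+\alpha\to\alpha$, so
\[ n_{1}(z,\alpha)=\frac{2i\sqrt{\Re z}}{\alpha}|\Im V_{+}-\Im V_{-}|\,\bigl(1+\mathcal{O}(|\Re z|^{-1/2})\bigr),\qquad p_{1}(z,\alpha)=\frac{2i\sqrt{\Re z}}{\alpha}|\Im V_{+}-\Im V_{-}|\,\bigl(1+\mathcal{O}(|\Re z|^{-1/2})\bigr), \]
whereas $n_{2},p_{2}$ stay bounded. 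The $L^{2}$-norm is dominated by the slowly decaying solution terms, $\int_{-\infty}^{0}|n_{1}e^{k_{-}(z)x}|^{2}\,\dd x=|n_{1}|^{2}/(2\Re k_{-}(z))$ and its analogue on $\R_{+}$; the corrector self-terms are smaller by a factor $|\Re z|^{-1}$, and the cross terms, carrying denominators $|k_{\pm}(z)|\asymp\sqrt{\Re z}$, are $\mathcal{O}(1)$. This yields
\[ \Vert\Psi_{z,\alpha}\Vert^{2}=\frac{4(\Re z)^{3/2}|\Im V_{+}-\Im V_{-}|^{3}}{|\alpha|^{2}\,|\Im V_{+}-\Im z||\Im V_{-}-\Im z|}\,\bigl(1+\mathcal{O}(|\Re z|^{-1/2})\bigr), \]
and dividing the two norms the powers of $|\Im V_{+}-\Im V_{-}|$ and of the two distances combine to the claimed quotient.

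The main obstacle I anticipate is not any single computation but the \emph{uniformity} of the remainders over the whole open interval $\Im z\in|(\Im V_{+},\Im V_{-})|$. Near its endpoints one factor $|\Im V_{\pm}-\Im z|$ tends to $0$, which simultaneously makes $\Re k_{\pm}(z)$ small and inflates both norms; I would therefore track the error in the root expansion as a function of the ratio $|\Im V_{\pm}-\Im z|/\sqrt{\Re z}$ and verify that every $\mathcal{O}(\cdot)$ above is bounded uniformly down to the endpoints, in particular that the discarded corrector and cross terms remain subdominant after division even as $|\Im V_{\pm}-\Im z|\to 0$. Apart from this uniform bookkeeping, the argument is the verbatim analogue of the proof of Theorem \ref{Theo Pseudomode}, the single structural change being that $\alpha$ shifts the common denominator from $k_{+}(z)+k_{-}(z)$ to $k_{+}(z)+k_{-}(z)+\alpha$, which is exactly what turns the $\Re z$ rate of Theorem \ref{Theo Pseudomode} into the $\sqrt{\Re z}$ rate here.
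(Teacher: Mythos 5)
Your proposal is correct and follows essentially the same route as the paper's proof: the same decomposition of $\Psi_{z,\alpha}$ into an exact-solution part (the $n_{1},p_{1}$ terms, annihilated by $\mathscr{L}_{\alpha}-z$) plus a conjugate-equation corrector (the $n_{2},p_{2}$ terms, producing the residual $2i(\Im V(x)-\Im z)\Psi_{2,z,\alpha}$), the same interface-condition bookkeeping at $0$, the same expansions of $k_{\pm}(z)$ from Lemma \ref{Lem Asymptotic} with the key cancellation $k_{+}(z)+k_{-}(z)+\alpha\to\alpha$, and the same conclusion that the corrector dominates the residual while the exact-solution part dominates the $L^{2}$-norm. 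The only presentational difference is that you verify the explicitly given coefficients directly, whereas the paper re-derives $n_{2}(z),p_{2}(z)$ by minimizing the quotient over an admissible family satisfying \eqref{Ass 1} — a step your verification can legitimately skip since the theorem statement supplies the values.
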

\section{Calculation resolvent and spectrum}\label{Section Resolvent Spectrum}
The main goal of this section is to study the resolvent and spectrum of the operator $\mathscr{L}$.
\subsection{Integral form of the resolvent: Proof of Proposition \ref{Prop Resolvent}}
We start by solving the resolvent equation. Let us fix $z\in \C \setminus ([V_{+},+\infty)\cup [V_{-},+\infty))$ and $f\in L^2(\R)$, we look for solution $u$ in $H^2(\R)$ such that
\[ (\mathscr{L}-z)u=f.\]
Because of the discontinuity of the potential $V$ at $0$, we look for the solution of the above equation in the form
\begin{align*}
u(x)=\left\{ \begin{aligned}
&u_{+}(x)\qquad &&\text{for }x>0,\\
&u_{-}(x)\qquad &&\text{for }x< 0.
\end{aligned} \right.
\end{align*}
It means that we need to find functions $u_{\pm}$ satisfying the corresponding equations
\begin{equation}\label{Non Homogeneous}
 -u_{\pm}''(x) +(V_{\pm}-z)u_{\pm}(x) = f(x).
\end{equation}
The variation of parameters method (VPM) is employed to find $u_{\pm}$, that is, we firstly find independent solutions associated with the homogeneous case (\emph{i.e.}, when $f=0$), they are
\[ e^{ k_{\pm}(z)x} \text{ and } e^{-k_{\pm}(z)x}.\]
Then, the general solutions of the non-homogeneous equations can be found in the form
\begin{equation}\label{u+-}
u_{\pm}(x) = \alpha_{\pm}(x) e^{ k_{\pm}(z)x} + \beta_{\pm}(x) e^{ -k_{\pm}(z)x},
\end{equation}
where $\alpha_{\pm}:\R_{\pm}\to \C$ and $\beta_{\pm}:\R_{\pm}\to \C$ are functions to be yet determined. By taking the first derivative of $u_{\pm}$, we obtain
\begin{align*}
u_{\pm}'(x) = \left[\alpha_{\pm}'(x) e^{ k_{\pm}(z)x} + \beta_{\pm}'(x) e^{ -k_{\pm}(z)x} \right]+ \left[\alpha_{\pm}(x) \left(e^{ k_{\pm}(z)x}\right)' + \beta_{\pm}(x) \left(e^{ -k_{\pm}(z)x}\right)'\right].
\end{align*}
The VPM starts by assuming that
\begin{equation}\label{VP 1}
\alpha_{\pm}'(x) e^{ k_{\pm}(z)x} + \beta_{\pm}'(x) e^{ -k_{\pm}(z)x}=0,
\end{equation}
an thus
\[u_{\pm}'(x) =\alpha_{\pm}(x) \left(e^{ k_{\pm}(z)x}\right)' + \beta_{\pm}(x) \left(e^{ -k_{\pm}(z)x}\right)' .\]
From this, the second derivative of $u_{\pm}$ is obtained, that is
\begin{align*}
u_{\pm}''(x) =  \left[\alpha_{\pm}'(x) \left(e^{ k_{\pm}(z)x}\right)' + \beta_{\pm}'(x) \left(e^{ -k_{\pm}(z)x}\right)'\right]+\left[\alpha_{\pm}(x) \left(e^{ k_{\pm}(z)x}\right)'' + \beta_{\pm}(x) \left(e^{ -k_{\pm}(z)x}\right)''\right].
\end{align*}
Replacing this into non-homogeneous equations \eqref{Non Homogeneous} and remember that $e^{ k_{\pm}(z)x}$ and $e^{-k_{\pm}(z)x}$ are solution of the homogeneous ones, it leads to
\begin{equation}\label{VP 2}
\alpha_{\pm}'(x) \left(e^{ k_{\pm}(z)x}\right)' + \beta_{\pm}'(x) \left(e^{ -k_{\pm}(z)x}\right)'=-f(x), \qquad \pm x>0.
\end{equation}
Solving \eqref{VP 1} and \eqref{VP 2}, we obtain 
\[ \alpha_{\pm}'(x)=-\frac{1}{2k_{\pm}(z)} e^{-k_{\pm}(z) x} f(x),\qquad \beta_{\pm}'(x)= \frac{1}{2k_{\pm}(z)} e^{k_{\pm}(z) x} f(x),\qquad \pm x>0\]
Hence, we can choose
\begin{equation}\label{alpha beta}
\begin{aligned}
\alpha_{\pm}(x)&=-\frac{1}{2k_{\pm}(z)}\int_{0}^{x} e^{-k_{\pm}(z) y} f(y)\, \dd y + A_{\pm}, \qquad &&\pm x >0,\\
\beta_{\pm}(x)&=\frac{1}{2k_{\pm}(z)}\int_{0}^{x} e^{k_{\pm}(z) y} f(y)\, \dd y + B_{\pm}, \qquad &&\pm x >0,
\end{aligned}
\end{equation}
where $A_{\pm}$, $B_{\pm}$ are some complex constants which are determined later by the fact that $u$ and its derivative $u'$ need to be continuous at zero and $u$ need to decay at infinities. We start with the decaying of $u$ at $+ \infty$. From the density of $C_{c}^{\infty}(\R)$ in $L^2(\R)$, for arbitrary $\varepsilon>0$, there exists $f_{\varepsilon}\in C_{c}^{\infty}(\R)$ such that $\Vert f-f_{\varepsilon} \Vert\leq \varepsilon$. Then, by the triangle and Holder inequalities, we have 
\begin{align*}
\left\vert \left(\int_{0}^{x} e^{k_{+}(z) y} f(y)\, \dd y\right) e^{-k_{+}(z)x} \right\vert &\leq \int_{0}^{x} e^{\Re k_{+}(z) (y-x)} |f(y)-f_{\varepsilon}(y) |\, \dd y +\int_{0}^{x} e^{\Re k_{+}(z) (y-x)} |f_{\varepsilon}(y) |\, \dd y\\
&\leq \sqrt{\frac{1-e^{-2\Re k_{+ }x}}{2\Re k_{+}(z)}} \Vert f -f_{\varepsilon}\Vert+\int_{0}^{x} e^{\Re k_{+}(z) (y-x)} |f_{\varepsilon}(y) |\, \dd y.
\end{align*}
Since $z\notin [V_{+},+\infty)$, thus $\Re k_{+}(z)(z)>0$ and we apply the dominated convergence theorem for the integral $\displaystyle \int_{0}^{x} e^{\Re k_{+}(z) (y-x)} |f_{\varepsilon}(y) |\, \dd y$, it yields that 
\[ \lim_{x\to+\infty} \left\vert \left(\int_{0}^{x} e^{k_{+}(z) y} f(y)\, \dd y\right) e^{-k_{+}(z)x} \right\vert \leq \frac{\varepsilon}{\sqrt{2 \Re k_{+}(z)}} . \]
From the arbitrariness of $\varepsilon$, it leads to
\begin{equation}
\lim_{x\to+\infty}  \left(\int_{0}^{x} e^{k_{+}(z) y} f(y)\, \dd y\right) e^{-k_{+}(z)x} =0.
\end{equation}
In other words, we have shown that $\displaystyle\lim_{x\to+\infty} \beta_{+}(x)e^{-k_{+}(z)x}=0$. Therefore, from the formula of $u_{+}$ in \eqref{u+-},   we have the following equivalences
\begin{equation}\label{A+}
\lim_{x\to +\infty} u_{+}(x) = 0 \Longleftrightarrow \lim_{x\to +\infty} \alpha_{+}(x)e^{k_{+}(z)x}=0  \Longleftrightarrow  A_{+}= \frac{1}{2k_{+}(z)}\int_{0}^{+\infty} e^{-k_{+}(z) y} f(y)\, \dd y.
\end{equation}
Indeed, the second equivalence (whose left-to-right implication is easy to see) comes from the density of $C_{c}^{\infty}(\R)$ in $L^2(\R)$, Holder inequality and the dominated convergence as above:
\begin{align*}
\left\vert \left(\int_{x}^{+\infty} e^{-k_{+}(z) y} f(y)\, \dd y\right) e^{k_{+}(z)x} \right\vert &\leq \int_{x}^{+\infty} e^{\Re k_{+}(z) (x-y)} |f(y)-f_{\varepsilon}(y) |\, \dd y \\
&\qquad+\int_{x}^{+\infty} e^{\Re k_{+}(z) (x-y)} |f_{\varepsilon}(y) |\, \dd y\\
&\leq \frac{\varepsilon}{\sqrt{2\Re k_{+}(z)}} +\int_{0}^{+\infty} e^{\Re k_{+}(z) (x-y)}\chi_{[x,+\infty)}(y) |f_{\varepsilon}(y) |\, \dd y\\
&\xrightarrow[]{x\to +\infty} \frac{\varepsilon}{\sqrt{2\Re k_{+}(z)}}.
\end{align*}
Similarly, we also have
\begin{equation}\label{B-}
 \lim_{x \to -\infty} u_{-}(x)=0 \qquad \Longleftrightarrow \qquad B_{-}= \frac{1}{2k_{-}(z)}\int_{-\infty}^{0} e^{k_{-}(z) y} f(y)\, \dd y.
\end{equation}
Since $u$ is found to satisfy regularity conditions
\[ u_{+}(0)=u_{-}(0),\qquad u_{+}'(0)=u_{-}'(0),\]
we obtain the system
\begin{equation*}
\left\{ \begin{aligned}
&A_{+}+B_{+}=A_{-}+B_{-},\\
&k_{+}(z)A_{+}-k_{+}(z)B_{+}=k_{-}(z)A_{-}-k_{-}(z)B_{-},
\end{aligned}
\right.
\end{equation*}
which allows us to determine $A_{-}$ and $B_{+}$ in terms of $A_{+}$ and $B_{-}$:
\begin{equation}\label{A+B-}
\left\{
\begin{aligned}
A_{-}&=\frac{2k_{+}(z)}{k_{+}(z)+k_{-}(z)}A_{+} -\frac{k_{+}(z)-k_{-}(z)}{k_{+}(z)+k_{-}(z)}B_{-},\\
B_{+}&=\frac{k_{+}(z)-k_{-}(z)}{k_{+}(z)+k_{-}(z)}A_{+}+\frac{2k_{-}(z)}{k_{+}(z)+k_{-}(z)}B_{-}.
\end{aligned}
\right.
\end{equation}
Here, $k_{+}(z)+k_{-}(z)\neq0 $ for all $z \notin [V_{\pm},+\infty)$ because $\Re \left(k_{+}(z)+k_{-}(z)\right)>0$ (from the choice of the principle branch of the square root). By replacing the values of constants $A_{\pm}$, $B_{\pm}$ in \eqref{A+}, \eqref{B-} and in \eqref{A+B-} into the formula of $u_{\pm}$ in \eqref{u+-}, we have
\begin{align*}
u_{+}(x)=&\frac{1}{2k_{+}(z)}\int_{0}^{+\infty} e^{-k_{+}(z)|x-y|}f(y)\, \dd y+\frac{k_{+}(z)-k_{-}(z)}{2k_{+}(z)(k_{+}(z)+k_{-}(z))}\int_{0}^{+\infty} e^{-k_{+}(z)(x+y)}f(y)\, \dd y\\
&+\frac{1}{k_{+}(z)+k_{-}(z)}\int_{-\infty}^{0} e^{-k_{+}(z)x+k_{-}(z)y}f(y)\, \dd y, \qquad x>0,\\
u_{-}(x)=&\frac{1}{2k_{-}(z)}\int_{-\infty}^{0} e^{-k_{-}(z)|x-y|}f(y)\, \dd y-\frac{k_{+}(z)-k_{-}(z)}{2k_{-}(z)(k_{+}(z)+k_{-}(z))}\int_{-\infty}^{0} e^{k_{-}(z)(x+y)}f(y)\, \dd y\\
&+\frac{1}{k_{+}(z)+k_{-}(z)}\int_{0}^{+\infty} e^{k_{-}(z)x-k_{+}(z)y}f(y)\, \dd y, \qquad x<0.
\end{align*}
Thus, given $f\in L^2(\R)$, we constructed a solution $u$ of the differential equation $(\mathscr{L}-z)u=f$ that has the integral form
\[ u(x)=\int_{\R} \mathcal{R}_{z}(x,y)f(y)\, \dd y,\]
where $\mathcal{R}_{z}(x,y)$ is given in the statement of Proposition \ref{Prop Resolvent}. After having a solution $u$ for the resolvent equation, we need to show that $u\in L^2(\R)$ and this can be done by using the Schur test, cf. \cite[Lem. 7.1]{Helffer13}. We will check that
\begin{equation}\label{Schur test}
\sup_{x\in \R} \int_{\R} |\mathcal{R}_{z}(x,y)|\, \dd y< +\infty \qquad\text{ and } \qquad\sup_{y\in \R} \int_{\R} |\mathcal{R}_{z}(x,y)|\, \dd x< +\infty.
\end{equation}
After noticing that $\mathcal{R}_{z}(x,y)$ is symmetric, \emph{i.e.,} $\mathcal{R}_{z}(x,y)=\mathcal{R}_{z}(y,x)$ for almost everywhere $(x,y)\in \R^2$, we just need to check the first one in \eqref{Schur test}. Directly from the formula of the kernel $\mathcal{R}_{z}(x,y)$, we have, for all $x>0$,
\begin{align*}
\int_{\R} |\mathcal{R}_{z}(x,y)|\, \dd y\leq &\frac{1}{|k_{+}(z)+k_{-}(z)|}\int_{-\infty}^{0} e^{-\Re k_{+}(z)x+ \Re k_{-}(z)y}\, \dd y + \frac{1}{2|k_{+}(z)|}\int_{0}^{+\infty} e^{-\Re k_{+}(z) |x-y|}\, \dd y\\
&+ \frac{|k_{+}(z)-k_{-}(z)|}{2|k_{+}(z)||k_{+}(z)+k_{-}(z)|} \int_{0}^{+\infty} e^{-\Re k_{+}(z)(x+y)}\, \dd y\\
\leq &\frac{1}{\Re k_{-}(z)|k_{+}(z)+k_{-}(z)|}+\frac{1}{\Re k_{+}(z)|k_{+}(z)|}+\frac{|k_{+}(z)-k_{-}(z)|}{2 \Re k_{+}(z)|k_{+}(z)||k_{+}(z)+k_{-}(z)|}.
\end{align*}
In the same way, for $x<0$, we obtain
\begin{align*}
\int_{\R} |\mathcal{R}_{z}(x,y)|\, \dd y
\leq &\frac{1}{\Re k_{+}(z)|k_{+}(z)+k_{-}(z)|}+\frac{1}{\Re k_{-}(z)|k_{-}(z)|}+\frac{|k_{+}(z)-k_{-}(z)|}{2 \Re k_{-}(z)|k_{+}(z)||k_{+}(z)+k_{-}(z)|}.
\end{align*}
Both the right hand sides of the above bounds for the integral $\int_{\R} |\mathcal{R}_{z}(x,y)|\, \dd y$ are finite provided $z\in \C\setminus \left( [V_{+},+\infty)\cup [V_{-},+\infty) \right)$. Thus, $u\in L^2(\R)$ and then $u$ will automatically belong to $H^2(\R)$ since $u''=(V-z)u-f\in L^2(\R)$.

\subsection{Characterization of the spectrum: Proof of Theorem \ref{Theo Spectrum of L}}\label{Subsec Spectrum L}
Let us consider $\xi\in C_{c}^{\infty}(\R)$ such that $\Vert \xi \Vert_{L^2}=1$ and its support lives in $(-1,1)$. We set
\[ \xi_{n}^{\pm}(x)\coloneqq \frac{1}{\sqrt{n}} \xi\left(\frac{x}{n} \mp n\right).\]
For $n\geq 1$, it is not hard to see that $\textup{Supp } \xi_{n}^{+}\subset (n^2-n,n^2+n)\subset \R_{+}$ and $\textup{Supp } \xi_{n}^{-}\subset (-n^2-n,-n^2+n)\subset \R_{-}$. Let us consider fixed but arbitrary $z_{\pm}\in [V_{\pm},+\infty)$ and we define sequence $u_{n}$ as follows
\[ u_{n}^{\pm}(x)\coloneqq \xi_{n}^{\pm}(x)e^{k_{\pm}(z_{\pm}) x}.\]
Notice that $k_{\pm}(z_{\pm})=\sqrt{z_{\pm}-V_{\pm}}\in i \overline{\R_{+}}$, it leads to the fact that $u_{n}^{\pm}$ is normalized:
\[ \Vert u_{n}^{\pm} \Vert_{L^2}^2 = \int_{\R} \frac{1}{n} \left\vert \xi\left(\frac{x}{n} \mp n\right)\right\vert^2\, \dd x=\Vert \xi \Vert_{L^2}^2=1.\]
Since $e^{k_{\pm}(z_{\pm}) x}$ satisfies $(\mathscr{L}-z_{\pm})e^{k_{\pm}(z_{\pm}) x}=0$ on $\R_{\pm}$, by taking the support of $\xi_{n}^{\pm}$ into account, we obtain
\begin{align*}
(\mathscr{L}-z_{\pm})u_{n}^{\pm}(x) &= \xi_{n}^{\pm}(x)(\mathscr{L}-\lambda_{\pm})e^{k_{\pm}(z_{\pm}) x}+\left[\mathscr{L}-\lambda,\xi_{n}^{\pm}(x) \right]e^{k_{\pm}(z_{\pm}) x}\\
&=\left[-\frac{\dd^2}{\dd x^2}, \xi_{n}^{\pm}(x)\right]e^{k_{\pm}(z_{\pm}) x}\\
&=\left( -\frac{\dd^2 \xi_{n}^{\pm}}{\dd x^2}(x)-2k_{\pm}(z_{\pm})\frac{\dd \xi_{n}^{\pm}}{\dd x}(x)\right)e^{k_{\pm}(z_{\pm}) x}.
\end{align*}
Here $[A,B]\coloneqq AB-BA$ is the notation of the commutator. Notice that, for each $k\in \{1,2\}$, we have
\[ \left\Vert \frac{\dd^{k}\xi_{n}^{\pm}}{\dd x^k}(x) \right\Vert^2 = \int_{\pm n^2-n}^{\pm n^2+n} \frac{1}{n^{2k+1}}\left\vert\xi^{(k)}\left(\frac{x}{n}-n\right)\right\vert^2\, \dd x=\frac{1}{n^{2k}}\int_{-1}^{1} \left\vert\xi^{(k)}(x)\right\vert^2\, \dd x \xrightarrow[]{n\to+\infty} 0.\]
It yields that $\Vert (\mathscr{L}-z_{\pm})u_{n}^{\pm}(x) \Vert\xrightarrow[]{n\to+\infty}0$. In other words, $\left(u_{n}^{+}\right)_{n\geq 1}$ forms a Weyl sequence for $z^{+}$ and $\left(u_{n}^{-}\right)_{n\geq 1}$ forms a Weyl sequence for $z^{-}$. Using for instance \cite[Lem. 3.3]{Cheverry-Raymond21}, we have $z_{\pm}\in \sigma(\mathscr{L})$ and from the arbitrariness of $z_{\pm}$ in $[V_{\pm},+\infty)$ we conclude that 
$$[V_{+},+\infty)\cup [V_{-},+\infty)\subset \sigma(\mathscr{L}).$$
From Proposition \ref{Prop Resolvent}, we deduce that
\[\sigma(\mathscr{L})\subset [V_{+},+\infty)\cup [V_{-},+\infty).\]
Then, the conclusion on the spectrum of $\mathscr{L}$ is followed. Since $\mathscr{L}$ is $J$-self-adjoint, its residual spectrum is empty (see \cite[Section 5.2.5.4]{Krejcirik-Siegl15}). Now, we will check that no points in $[V_{+},+\infty)\cup [V_{-},+\infty)$ can be the eigenvalue of $\mathscr{L}$, and thus $\sigma_{p}(\mathscr{L})=\emptyset$. Take $z\in [V_{+}, +\infty)$, assume that $z$ is the eigenvalue of $\mathscr{L}$ and $u$ is its associated eigenfunction, \emph{i.e.}, $(\mathscr{L}-z)u=0$. From \eqref{u+-}, the restriction of $u$ on $\R_{+}$ and $\R_{-}$, denoted by, respectively, $u_{+}$ and $u_{-}$, has the expression
\begin{equation}\label{Eigenfunction 1}
 u_{\pm}(x)=A_{\pm} e^{k_{\pm}(z)x} + B_{\pm}e^{-k_{\pm}(z)x},\qquad \text{for }\pm x >0.
\end{equation}
Since $z\in [V_{+}, +\infty)$, then $\Re k_{+}(z)=0$ and thus,
\[ | u_{+}(x)|^2 = |A_{+}|^2 + |B_{+}|^2 + 2 \Re \left(A_{+}\overline{B}_{+} e^{i\,\Im k_{+}(z) x}\right)\geq \left(|A_{+}|-|B_{+}|\right)^2.\]
The fact that $u_{+}$ is in $L^2(\R_{+})$ and that $\R_{+}$ is unbounded, it implies that $|A_{+}|=|B_{+}|$. By writing $A_{+}=|A_{+}| e^{i \textup{Arg }A_{+}}, B_{+}=|B_{+}| e^{i \textup{Arg }B_{+}}$, here $\textup{Arg }(w)$ denotes the principal argument of a complex number $w$, we obtain
\begin{align*}
|u_{+}(x)|^2 =& 2|A_{+}|^2\left( 1 + \cos\left(\textup{Arg }A_{+} - \textup{Arg }B_{+} + \Im k_{+}(z) x  \right)\right)\\
=&4|A_{+}|^2 \cos^{2}\left( \frac{\textup{Arg }A_{+} - \textup{Arg }B_{+} + \Im k_{+}(z) x }{2}\right).
\end{align*}
Since $u_{+}\in L^2(\R^{+})$, it yields that $A_{+}=0$ (since the trigonometry is not integrable on unbounded domain as $\R_{+}$) and thus $u_{+}=0$ on $\R_{+}$. Similarly, $u_{-}\in L^2(\R_{-})$ also implies that $u_{-}=0$ on $\R_{-}$ and thus, we have a contradiction. Therefore $[V_{+},+\infty)$ is not a subset of the point spectrum $\sigma_{p}(\mathscr{L}_{\alpha})$. This argument can be made for $[V_{-},+\infty)$ in the same manner. Therefore, the spectrum of $\mathscr{L}$ is purely continuous. 

In order to obtain the statement on essential spectra of $\mathscr{L}$, we will show that the above sequence $u_{n}$ is singular, \emph{i.e.}, $u_{n}$ converges weakly to zero.  Indeed, take $f\in L^2(\R)$, by the density of $C_{c}^{\infty}(\R)$ in $L^2(\R)$, for arbitrary $\varepsilon>0$, there exists a sequence $(f_{k})_{k\geq 1} \subset C_{c}^{\infty}(\R)$ such that $\Vert f-f_{k} \Vert\leq \varepsilon$. Then, by using triangle and Cauchy-Schwarz inequalities with $\Vert u_{n}^{\pm} \Vert=1$ and combining with the definition of $\xi_{n}$, we obtain
\begin{align*}
\vert \langle u_{n}^{\pm}, f \rangle \vert\leq |\langle u_{n}^{\pm}, f-f_{k}\rangle\vert + \vert \langle u_{n}^{\pm}, f_{k} \rangle \vert\leq \Vert f-f_{k}\Vert + \frac{1}{\sqrt{n}} \Vert \xi \Vert_{L^{\infty}} \Vert f_{k} \Vert_{L^1}\leq \varepsilon + \frac{1}{\sqrt{n}} \Vert \xi \Vert_{L^{\infty}} \Vert f_{k} \Vert_{L^1}
\end{align*}
Let $n\to +\infty$ and consider arbitrary $\varepsilon>0$, it implies that $\langle u_{n}, f \rangle \xrightarrow[n\to +\infty]{} 0$. Since $f$ is taken arbitrarily in $L^2(\R)$, we have just proved the weakly convergence to zero of $u_{n}$. Thanks to \cite[Theo. IX.1.3]{Edmunds-Evans18}, we implies that $\sigma_{\textup{e2}}(\mathscr{L})=\sigma(\mathscr{L})$. Since $\mathscr{L}$ is $J$-self-adjoint, four first essential spectra $\sigma_{\textup{ek}}(\mathscr{L})$ ($k\in \{1,2,3,4\}$) are identical (see \cite[Theo. IX.1.6]{Edmunds-Evans18}). Since $\C \setminus \sigma_{\textup{e1}}(\mathscr{L})=\rho(\mathscr{L})$ is connected, the fifth essential spectrum is also the same (cf. \cite[Prop. 5.4.4]{Krejcirik-Siegl15}).
\section{Pseudospectral estimates}\label{Section Resolvent Estimate}
From now on, unless otherwise stated, for simplicity, we will denote $z=\tau+i\delta$ with $(\tau,\delta)\in \R_{+} \times |(\Im V_{+},\Im V_{-})|$ and each time we write some asymptotic formula with big $\mathcal{O}$ notation, we understand that this formula happen as $\tau \to +\infty$ and uniformly for all $\delta \in |(\Im V_{+},\Im V_{-})|$.
\subsection{Resolvent estimate inside the numerical range: Proof of Theorem \ref{Theo Norm of resolvent}}\label{Subsec Resolvent inside N}
We rewrite $(\mathscr{L}-z)^{-1}$ as the sum of two integral operators
\[ (\mathscr{L}-z)^{-1}= \mathcal{R}_{1,z}+\mathcal{R}_{2,z},\]
where
\begin{equation}\label{R12}
\mathcal{R}_{1,z}f(x)\coloneqq\int_{\R} \mathcal{R}_{1,z}(x,y) f(y)\, \dd y,\qquad \mathcal{R}_{2,z}f(x)\coloneqq\int_{\R} \mathcal{R}_{2,z}(x,y) f(y)\, \dd y,
\end{equation}
whose kernel are given by
\begin{align}
\mathcal{R}_{1,z}(x,y) &=  \left\{
\begin{aligned}
			 & K_{++}(z)e^{-k_{+}(z)(x+y)}, \qquad && \text{for } \{x> 0, y> 0\}; \\
			 &  K_{--}(z)e^{k_{-}(z)(x+y)},\qquad && \text{for } \{x < 0, y< 0\};\\  
			 & K_{+-}(z)e^{-k_{+}(z)x+k_{-}(z)y}, \qquad&& \text{for } \{x>0, y< 0\}; \\			 
			 & K_{+-}(z)e^{k_{-}(z)x-k_{+}(z)y}, \qquad && \text{for } \{x< 0, y> 0\}; 
		\end{aligned}
\right.\label{R1}\\
\mathcal{R}_{2,z}(x,y)&=\left\{
\begin{aligned}
			 &  \frac{1}{2k_{+}(z)}e^{-k_{+}(z)|x-y|},\qquad && \text{for } \{x> 0, y> 0\};\\ 		 
			 &  \frac{1}{2k_{-}(z)}e^{-k_{-}(z)|x-y|},\qquad && \text{for } \{x< 0, y< 0\};\\ 
			 & 0,\qquad && \text{for } \{x.y<0\}.
		\end{aligned}
\right.\label{R2}
\end{align}
Here, we denote
\begin{equation}\label{Big K}
\begin{aligned}
K_{++}(z)&\coloneqq\frac{k_{+}(z)-k_{-}(z)}{2k_{+}(z)\left(k_{+}(z)+k_{-}(z) \right)},\qquad K_{--}(z)\coloneqq-\frac{k_{+}(z)-k_{-}(z)}{2k_{-}(z)\left(k_{+}(z)+k_{-}(z) \right)},\\
K_{+-}(z) &\coloneqq\frac{1}{k_{+}(z)+k_{-}(z)}.
\end{aligned}
\end{equation}
Our strategy is to show that the norm of $\mathcal{R}_{1,z}$ will play the main role, while the norm of $\mathcal{R}_{2,z}$ is just a small perturbation compared with $\mathcal{R}_{1,z}$ in the divergence of the resolvent norm inside the numerical range. To do that, two-sided estimate of the norm of $\mathcal{R}_{1,z}$ will be clearly established with the help of the following optimization lemma.
\begin{lemma}\label{Lem Optimize}
Let $A,B,C\in \R$, consider a function of two variables
\[f(x,y)=Ax^2+Bxy+Cy^2\]
on the circle $x^2+y^2=1$. Then it attains the maximum on this circle  and
\begin{align*}
&\max_{x^2+y^2=1} f(x,y)=\frac{A+C+\sqrt{(A-C)^2+B^2}}{2}.
\end{align*}
\end{lemma}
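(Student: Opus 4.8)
The plan is to reduce the constrained optimization over the circle to a one-variable trigonometric problem and then read off the amplitude of a single sinusoid. First I would parametrize the unit circle by $x=\cos\theta$ and $y=\sin\theta$ with $\theta\in[0,2\pi]$, so that $f$ restricted to the circle becomes the continuous function
\[
g(\theta)=A\cos^2\theta+B\cos\theta\sin\theta+C\sin^2\theta.
\]
Since the circle $\{x^2+y^2=1\}$ is compact and $f$ is continuous, the maximum is attained; this disposes of the existence assertion at once.

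Next I would linearize this expression using the double-angle identities $\cos^2\theta=\tfrac{1+\cos 2\theta}{2}$, $\sin^2\theta=\tfrac{1-\cos 2\theta}{2}$, and $\cos\theta\sin\theta=\tfrac{\sin 2\theta}{2}$, which recast $g$ as
\[
g(\theta)=\frac{A+C}{2}+\frac{A-C}{2}\cos 2\theta+\frac{B}{2}\sin 2\theta.
\]
The key step is then to recognize the last two terms as a single sinusoid $R\cos(2\theta-\varphi)$ with amplitude $R=\sqrt{\left(\tfrac{A-C}{2}\right)^2+\left(\tfrac{B}{2}\right)^2}=\tfrac{1}{2}\sqrt{(A-C)^2+B^2}$. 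Because $2\theta$ sweeps out a full period as $\theta$ ranges over $[0,2\pi]$, the cosine reaches the value $1$, so the maximum of $g$ equals the constant term plus this amplitude, namely $\tfrac{A+C+\sqrt{(A-C)^2+B^2}}{2}$, as claimed.

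A more conceptual alternative, which I would at most mention in a remark, is to view $f(x,y)=\langle v,Mv\rangle_{\R^2}$ for $v=(x,y)$ and the symmetric matrix $M=\begin{pmatrix}A & B/2\\ B/2 & C\end{pmatrix}$; by the real spectral theorem the maximum of a quadratic form on the unit sphere is the largest eigenvalue of $M$, and solving the characteristic equation with $\operatorname{tr}M=A+C$ and $\det M=AC-\tfrac{B^2}{4}$ yields the same value. Either route is entirely routine, so there is no genuine obstacle here; the only point deserving a word of care is verifying that the amplitude is actually achieved, which follows since the argument $2\theta$ covers an entire period, and that the formula is symmetric in the sense of recovering the correct sign of the square root for the maximum (as opposed to the minimum).
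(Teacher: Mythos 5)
Your proof is correct and follows essentially the same route as the paper: parametrize the circle by $x=\cos\theta$, $y=\sin\theta$, apply the double-angle identities to get $\tfrac{1}{2}\bigl(A+C+(A-C)\cos 2\theta+B\sin 2\theta\bigr)$, and bound the oscillating part by its amplitude $\tfrac{1}{2}\sqrt{(A-C)^2+B^2}$ (the paper phrases this via Cauchy--Schwarz with explicit equality case, you via the phase-shift identity $R\cos(2\theta-\varphi)$, which are the same estimate). The eigenvalue remark is a valid alternative but not needed.
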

\begin{proof}
Let us write $x=\cos(\theta)$ and $y=\sin(\theta)$ for $\theta \in [0,2 \pi)$ and write
\begin{align*}
f(\cos(\theta),\sin(\theta))=&A \cos^2(\theta) + B \sin(\theta) \cos(\theta) + C \sin^2(\theta)\\
=&\frac{1}{2}\left( A+C + (A-C)\cos(2\theta)+ B \sin(2\theta)\right)
\end{align*}
By using the Cauchy-Schwarz inequality, we obtain the upper bound, for every $\theta\in [0,2\pi)$,
\[ f(\cos(\theta),\sin(\theta))\leq \frac{1}{2}\left( A+C+\sqrt{(A-C)^2+B^2}\right),\]
and the equality can be obtained when
\[ \left(\cos(2\theta), \sin(2\theta)\right)=\pm \left(\frac{A-C}{\sqrt{(A-C)^2+B^2}},\frac{B}{\sqrt{(A-C)^2+B^2}}\right).\]
Then, the conclusion of the lemma follows.
\end{proof}
\begin{proposition}\label{Prop Norm R1}
Let $\mathcal{R}_{1,z}$ be the integral operator with the kernel $\mathcal{R}_{1,z}(x,y)$ defined as in \eqref{R1}. For all $z\in \rho(\mathscr{L})$, $\mathcal{R}_{1,z}$ is a bounded operator on $L^2(\R)$ whose norm satisfies
\begin{equation}\label{Bound R1}
\begin{aligned}
\Vert \mathcal{R}_{1,z} \Vert &\leq \frac{1}{\sqrt{2}} \sqrt{\sqrt{(A(z)-C(z))^2+B(z)^2}+A(z)+C(z)+2D(z)},\\
\Vert \mathcal{R}_{1,z} \Vert &\geq \frac{1}{\sqrt{2}} \sqrt{\sqrt{(A(z)-C(z))^2+\widetilde{B}(z)^2}+A(z)+C(z)+2D(z)},
\end{aligned}
\end{equation}
where 
\begin{align*}
A(z) &\coloneqq\frac{|K_{++}(z)|^2}{4 (\Re k_{+}(z))^2},\qquad C(z) \coloneqq \frac{|K_{--}(z)|^2}{4 (\Re k_{-}(z))^2},\qquad D(z)\coloneqq\frac{|K_{+-}(z)|^2}{4 (\Re k_{-}(z))(\Re k_{+}(z))},\\
B(z) &\coloneqq \frac{|K_{+-}(z)|}{2\sqrt{(\Re k_{-}(z))(\Re k_{+}(z))}}\left(\frac{|K_{++}(z)|}{\Re k_{+}(z)}+ \frac{|K_{--}(z)|}{\Re k_{-}(z)}\right),\\
\widetilde{B}(z) &\coloneqq\frac{1}{2\sqrt{(\Re k_{-}(z))(\Re k_{+}(z))}}\left(\frac{\Re\left( K_{++}(z)\overline{K_{+-}(z)}\right)}{\Re k_{+}(z)}+\frac{\Re\left( K_{--}(z)\overline{K_{+-}(z)}\right)}{\Re k_{-}(z)}\right).
\end{align*}
\end{proposition}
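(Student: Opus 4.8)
The plan is to exploit the \emph{separable} structure of the kernel \eqref{R1}: the operator $\mathcal{R}_{1,z}$ has rank at most two. Setting $\phi_{+}(x)\coloneqq e^{-k_{+}(z)x}\mathbf{1}_{\R_{+}}(x)$ and $\phi_{-}(x)\coloneqq e^{k_{-}(z)x}\mathbf{1}_{\R_{-}}(x)$, and introducing the two linear functionals $a\coloneqq\int_{0}^{+\infty}e^{-k_{+}(z)y}f(y)\,\dd y$ and $b\coloneqq\int_{-\infty}^{0}e^{k_{-}(z)y}f(y)\,\dd y$, a direct rearrangement of \eqref{R1} gives
\[
\mathcal{R}_{1,z}f=\big(K_{++}(z)\,a+K_{+-}(z)\,b\big)\phi_{+}+\big(K_{+-}(z)\,a+K_{--}(z)\,b\big)\phi_{-}.
\]
Since $z\in\rho(\mathscr{L})$ forces $\Re k_{\pm}(z)>0$ (principal branch of the square root), the functions $\phi_{\pm}$ lie in $L^2(\R)$, with $P^2\coloneqq\|\phi_{+}\|^2=(2\Re k_{+}(z))^{-1}$ and $N^2\coloneqq\|\phi_{-}\|^2=(2\Re k_{-}(z))^{-1}$; in particular $\mathcal{R}_{1,z}$ is bounded. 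As $\phi_{+}$ and $\phi_{-}$ have disjoint supports they are orthogonal, whence $\|\mathcal{R}_{1,z}f\|^2=|K_{++}a+K_{+-}b|^2P^2+|K_{+-}a+K_{--}b|^2N^2$.

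First I would reduce the operator-norm computation to a finite-dimensional optimization. By Cauchy--Schwarz, $|a|\le P\,\|f\|_{L^2(\R_{+})}$ and $|b|\le N\,\|f\|_{L^2(\R_{-})}$, with equality precisely when $f$ is proportional to $\overline{\phi_{+}}$ on $\R_{+}$ and to $\overline{\phi_{-}}$ on $\R_{-}$. Writing $s\coloneqq\|f\|_{L^2(\R_{+})}$, $t\coloneqq\|f\|_{L^2(\R_{-})}$ (so $s^2+t^2=\|f\|^2$) and letting $\gamma$ denote the relative phase between $a$ and $b$, replacing $|a|,|b|$ by their maximal values $Ps,Nt$ and expanding the two squared moduli bounds $\|\mathcal{R}_{1,z}f\|^2$ above by
\[
(A(z)+D(z))\,s^2+(C(z)+D(z))\,t^2+2PN\,st\,\Re\!\Big[\big(P^2K_{++}\overline{K_{+-}}+N^2K_{+-}\overline{K_{--}}\big)e^{-i\gamma}\Big],
\]
with equality at the extremizer just described, where $A,C,D$ are exactly the quantities of the statement.

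For the \textbf{upper bound} I would discard the phase via the triangle inequality, estimating the bracketed quantity by $|K_{+-}|\big(P^2|K_{++}|+N^2|K_{--}|\big)$, so that the cross term is at most $B(z)\,st$; applying Lemma \ref{Lem Optimize} to the quadratic form $(A+D)s^2+B\,st+(C+D)t^2$ on the unit circle then yields $\|\mathcal{R}_{1,z}\|^2\le\tfrac12\big(A+C+2D+\sqrt{(A-C)^2+B^2}\big)$, which is the claimed estimate after taking the square root. For the \textbf{lower bound} I would instead test with the explicit function $f=\overline{\phi_{+}}+\lambda\,\overline{\phi_{-}}$, choosing the phase of $\lambda$ so that $\gamma\in\{0,\pi\}$; this saturates Cauchy--Schwarz and realizes the cross coefficient $\pm\widetilde{B}(z)$, where $\widetilde{B}(z)=2PN\big(P^2\Re(K_{++}\overline{K_{+-}})+N^2\Re(K_{--}\overline{K_{+-}})\big)$ coincides with the quantity of the statement. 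A second application of Lemma \ref{Lem Optimize}, whose circle maximum is insensitive to the sign of the cross coefficient, produces the matching lower estimate with $\widetilde{B}$ in place of $B$.

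The \emph{main obstacle} is the careful bookkeeping of phases in the cross term. The upper bound must pass through the absolute values $|K_{++}|,|K_{--}|$ (triangle inequality on $e^{-i\gamma}$), whereas the lower bound is confined to a single attainable relative phase and therefore only sees the real parts $\Re(K_{++}\overline{K_{+-}})$ and $\Re(K_{--}\overline{K_{+-}})$; this discrepancy is exactly the origin of the gap between $B(z)$ and $\widetilde{B}(z)$. One must also verify that the Cauchy--Schwarz extremizers $\overline{\phi_{\pm}}$ genuinely belong to $L^2(\R)$ (immediate from $\Re k_{\pm}(z)>0$), so that the lower bound is attained by a legitimate element of $L^2(\R)$ rather than merely approached.
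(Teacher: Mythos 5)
Your proposal is correct and is essentially the paper's own proof: the same orthogonal expansion of $\Vert \mathcal{R}_{1,z}f\Vert^2$ over the disjointly supported $\phi_{+}$ and $\phi_{-}$, Cauchy--Schwarz (H\"older) plus the triangle inequality fed into Lemma \ref{Lem Optimize} for the upper bound, and the rank-two test family $\mu\overline{\phi_{+}}+\lambda\overline{\phi_{-}}$ with phase-aligned (i.e.\ real) relative coefficient --- exactly the paper's test function $f_{0}$ in \eqref{f0} --- again with Lemma \ref{Lem Optimize} for the lower bound, which is precisely where the gap between $B(z)$ (moduli) and $\widetilde{B}(z)$ (real parts) originates in the paper as well. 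One phrasing to tighten: since the cross term $2\Re\bigl[\bigl(P^{2}K_{++}\overline{K_{+-}}+N^{2}K_{+-}\overline{K_{--}}\bigr)a\overline{b}\bigr]$ can be negative, it must be bounded by its modulus \emph{before} $|a|,|b|$ are replaced by $Ps,Nt$ (your displayed intermediate inequality can fail as stated when that real part is negative), which is what your final chain through $B(z)\,st$ effectively does, so the upper bound stands.
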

\begin{proof}
Consider $f\in L^2(\R)$ such that $\Vert f \Vert=1$, we have
\begin{align*}
 \Vert \mathcal{R}_{1,z} f \Vert^2=& \int_{\R} \vert \mathcal{R}_{1,z} f(x) \vert^2\, \dd x\\
=&\int_{0}^{+\infty} \left\vert \int_{-\infty}^{0}K_{+-}(z)e^{-k_{+}(z)x+k_{-}(z)y}f(y)\dd y+\int_{0}^{+\infty}K_{++}(z)e^{-k_{+}(z)(x+y)}f(y)\,\dd y\right\vert^2\, \dd x\\
&+\int_{-\infty}^{0} \left\vert \int_{-\infty}^{0}K_{--}(z)e^{k_{-}(z)(x+y)}f(y)\dd y+\int_{0}^{+\infty}K_{+-}(z)e^{k_{-}(z)x-k_{+}(z)y}f(y)\,\dd y\right\vert^2\, \dd x\\
=&\frac{1}{2\Re k_{+}(z)}\left\vert \int_{-\infty}^{0}K_{+-}(z)e^{k_{-}(z)y}f(y)\dd y+\int_{0}^{+\infty}K_{++}(z)e^{-k_{+}(z)y}f(y)\,\dd y\right\vert^2\\
&+\frac{1}{2 \Re k_{-}(z)} \left\vert \int_{-\infty}^{0}K_{--}(z)e^{k_{-}(z)y}f(y)\dd y+\int_{0}^{+\infty}K_{+-}(z)e^{-k_{+}(z)y}f(y)\,\dd y\right\vert^2.
\end{align*}
Using Holder's inequality and remember that $\Vert f \Vert_{L^2(\R_{-})}^2+\Vert f \Vert_{L^2(\R_{+})}^2=1$, we obtain
\begin{align*}
\Vert \mathcal{R}_{1,z} f \Vert^2\leq &\frac{1}{2 \Re k_{+}(z)}\left( \frac{|K_{+-}(z)|}{\sqrt{2 \Re k_{-}(z)}} \Vert f \Vert_{L^2(\R_{-})}+\frac{|K_{++}(z)|}{\sqrt{2 \Re k_{+}(z)}} \Vert f \Vert_{L^2(\R_{+})}\right)^2\\
&+\frac{1}{2 \Re k_{-}(z)} \left( \frac{|K_{--}(z)|}{\sqrt{2 \Re k_{-}(z)}} \Vert f \Vert_{L^2(\R_{-})}+\frac{|K_{+-}(z)|}{\sqrt{2 \Re k_{+}(z)}} \Vert f \Vert_{L^2(\R_{+})}\right)^2\\
=&  \frac{|K_{++}(z)|^2}{4 (\Re k_{+}(z))^2} \Vert f \Vert_{L^2(\R_{+})}^2+\frac{|K_{--}(z)|^2}{4 (\Re k_{-}(z))^2} \Vert f \Vert_{L^2(\R_{-})}^2+\frac{|K_{+-}(z)|^2}{4 (\Re k_{-}(z))(\Re k_{+}(z))}\\
&+\frac{|K_{+-}(z)|}{2\sqrt{(\Re k_{-}(z))(\Re k_{+}(z))}}\left( \frac{|K_{--}(z)|}{\Re k_{-}(z)}+\frac{|K_{++}(z)|}{\Re k_{+}(z)}\right)\Vert f \Vert_{L^2(\R_{-})} \Vert f \Vert_{L^2(\R_{+})}.
\end{align*}
By applying Lemma \ref{Lem Optimize}, it yields that
\begin{align*}
\Vert \mathcal{R}_{1,z} f \Vert^2\leq \frac{1}{2} \left(A(z)+C(z)+\sqrt{(A(z)-C(z))^2+B(z)^2} +2D(z) \right),
\end{align*}
where $A(z),B(z),C(z),D(z)$ defined as in the statement of this proposition. For all $z\in \rho(\mathscr{L})$, $\Re k_{\pm}(z)>0$, then all coefficients $A(z),B(z),C(z),D(z)$ are finite, for that reason, $\mathcal{R}_{1,z}$ is bounded and the upper bound in \eqref{Bound R1} is obtained. In order to get the lower bound, we introduce a test function
\begin{equation}\label{f0}
f_{0}(y)=\left\{
\begin{aligned}
&\alpha e^{-\overline{k_{+}(z)} y} &&\text{for } y\geq 0,\\
&\beta e^{\overline{k_{-}(z)} y} &&\text{for } y\leq 0,
\end{aligned}
\right.
\end{equation}
where $\alpha, \beta \in \R$ will be determined later.\\
By straightforward computation, the action of the operator $\mathcal{R}_{1,z}f_{0}$ is given by
\begin{align*}
\left[\mathcal{R}_{1,z}f_{0}\right](x) =\left\{
\begin{aligned}
&e^{-k_{+}(z)x}\left( \frac{\alpha K_{++}(z)}{2\Re k_{+}(z)}+\frac{\beta K_{+-}(z)}{2 \Re k_{-}(z)}  \right) && \text{ for }x>0,\\
&e^{k_{-}(z)x}\left(\frac{\alpha K_{+-}(z)}{2\Re k_{+}(z)}+ \frac{\beta K_{--}(z)}{2 \Re k_{-}(z)} \right) && \text{ for }x<0.
\end{aligned}
\right.
\end{align*}
It yields that
\begin{align*}
\Vert \mathcal{R}_{1,z} f_{0} \Vert^2 &=\frac{1}{2\Re k_{+}(z)} \left\vert \frac{\alpha K_{++}(z)}{2\Re k_{+}(z)}+\frac{\beta K_{+-}(z)}{2 \Re k_{-}(z)}  \right\vert^2+\frac{1}{2\Re k_{-}(z)} \left\vert\frac{\alpha K_{+-}(z)}{2\Re k_{+}(z)}+\frac{\beta K_{--}(z)}{2 \Re k_{-}(z)}\right\vert^2.
\end{align*}
By the definition of $f_{0}$, we have
\[ \Vert f_{0} \Vert^2 = \frac{|\alpha|^2}{2\Re k_{+}(z)}+\frac{|\beta|^2}{2\Re k_{-}(z)}. \]
To normalize the norm of $f_{0}$, let us write
\[ \alpha = \sqrt{2 \Re k_{+}(z)} a,\qquad \beta = \sqrt{2 \Re k_{-}(z)}b, \text{ with some } a,b\in \R: a^2+b^2=1.\]
Then, our problem turns to find $(a,b)\in \R$ such that the quantity
\begin{align*}
\Vert \mathcal{R}_{1,z} f_{0} \Vert^2=&\frac{1}{2\Re k_{+}(z)} \left\vert \frac{a K_{++}(z)}{\sqrt{2\Re k_{+}(z)}}+\frac{b K_{+-}(z)}{\sqrt{2 \Re k_{-}(z)}} \right\vert^2+\frac{1}{2\Re k_{-}(z)} \left\vert \frac{a K_{+-}(z)}{\sqrt{2\Re k_{+}(z)}}+\frac{b K_{--}(z)}{\sqrt{2 \Re k_{-}(z)}}  \right\vert^2\\
=&\frac{a^2|K_{++}(z)|^2}{4 (\Re k_{+}(z))^2}+\frac{b^2 |K_{--}(z)|^2}{4 (\Re k_{-}(z))^2} + \frac{|K_{+-}(z)|^2}{4(\Re k_{-}(z))(\Re k_{+}(z))}\\
&+\frac{1}{2\sqrt{(\Re k_{-}(z))(\Re k_{+}(z))}}\left(\frac{\Re\left( K_{++}(z)\overline{K_{+-}(z)}\right)}{\Re k_{+}(z)}+\frac{\Re\left( K_{--}(z)\overline{K_{+-}(z)}\right)}{\Re k_{-}(z)}\right)ab
\end{align*}
attains its maximum. Lemma \ref{Lem Optimize} tells us that there exists such a couple $(a,b)$ and its maximum is
\[ \Vert \mathcal{R}_{1,z} f_{0} \Vert^2= \frac{1}{2} \left(A(z)+C(z)+\sqrt{(A(z)-C(z))^2+\widetilde{B}(z)^2} +2D(z) \right).\]
With this choice of test function, the quantity $\Vert \mathcal{R}_{1,z} f_{0} \Vert$ becomes the lower bound for the norm of $\mathcal{R}_{1,z}$.
\end{proof}
After having explicit formulas for two-sided bounds of the norm of $\mathcal{R}_{1,z}$ in the above proposition, we will send $z$ to infinity inside the numerical range of $\mathscr{L}$ to see the asymptotic behavior of these two-sided bounds. The following lemma will be employed to provide us some useful asymptotic formulas from which we deduce the asymptotic behavior of these two-sides bounds.
\begin{lemma}\label{Lem Asymptotic}
Let $z=\tau+i\delta$ with $\tau>0$, $\delta\in K\setminus \{\Im V_{+},\Im V_{-}\}$ with $K$ is some compact set in $\R$. Then, the following asymptotic formulas hold as $\tau\to +\infty$ and uniformly for all $\delta\in K \setminus \{\Im V_{+},\Im V_{-}\}$:
\begin{equation*}
\begin{aligned}
k_{+}(z)=&\sgn(\Im V_{+}-\delta)\left[i\sqrt{\tau} + \frac{(\Im V_{+}-\delta)-i\Re V_{+}}{2\sqrt{\tau}} +\mathcal{O}\left( \frac{1}{|\tau|^{3/2}}\right)\right],\\
k_{-}(z)=&\sgn(\Im V_{-}-\delta)\left[i\sqrt{\tau} + \frac{(\Im V_{-}-\delta)-i\Re V_{-}}{2\sqrt{\tau}} +\mathcal{O}\left( \frac{1}{|\tau|^{3/2}}\right)\right],
\end{aligned}
\end{equation*}
and 
\begin{equation*}
\begin{aligned}
&\Re k_{+}(z) = \frac{|\Im V_{+}-\delta|}{2\sqrt{\tau}}\left(1+\mathcal{O}\left(\frac{1}{|\tau|}\right)\right),\qquad &\Re k_{-}(z) = \frac{|\Im V_{-}-\delta|}{2\sqrt{\tau}}\left(1+\mathcal{O}\left(\frac{1}{|\tau|}\right)\right).
\end{aligned}
\end{equation*}
\end{lemma}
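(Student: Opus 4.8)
The plan is to reduce both expansions to a single computation for $k_{+}(z)=\sqrt{V_{+}-z}$, since the formula for $k_{-}(z)$ follows verbatim after replacing $V_{+}$ by $V_{-}$. Writing $z=\tau+i\delta$, I would first record the elementary identity
\[
V_{+}-z=-(\tau-\Re V_{+})+i(\Im V_{+}-\delta),
\]
so that for $\tau$ large the point $V_{+}-z$ lies just above the negative real axis when $\Im V_{+}-\delta>0$ and just below it when $\Im V_{+}-\delta<0$. Because $\delta\neq\Im V_{+}$, this point never touches the branch cut $(-\infty,0]$, and the principal square root is well defined; the sign $\sgn(\Im V_{+}-\delta)$ in the statement is precisely the book-keeping of which side of the cut we sit on.

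Next I would factor out $\tau$ and set $u\coloneqq\tau^{-1}\bigl(\Re V_{+}+i(\Im V_{+}-\delta)\bigr)$, so that $V_{+}-z=-\tau(1-u)$. For $\tau$ large and $\delta$ in the compact set $K$, the number $1-u$ lies in the open right half-plane (its real part is $1-\Re V_{+}/\tau>0$), hence its principal square root is unambiguous and close to $1$. The key algebraic step is the branch identity
\[
\sqrt{V_{+}-z}=i\,\sgn(\Im V_{+}-\delta)\,\sqrt{\tau}\,\sqrt{1-u},
\]
which I would justify by noting that the right-hand side squares to $V_{+}-z$ and has positive real part, i.e.\ its argument lies in $(-\tfrac{\pi}{2},\tfrac{\pi}{2})$, so it coincides with the principal root. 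Pinning down this sign and verifying the principal branch is used throughout is the main obstacle; everything afterward is routine.

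With the factorization in hand, I would expand $\sqrt{1-u}=1-\tfrac{u}{2}+\mathcal{O}(u^{2})$ and substitute. A short computation gives $-i\,u/2=\bigl((\Im V_{+}-\delta)-i\Re V_{+}\bigr)/(2\tau)$, whence
\[
k_{+}(z)=\sgn(\Im V_{+}-\delta)\left[i\sqrt{\tau}+\frac{(\Im V_{+}-\delta)-i\Re V_{+}}{2\sqrt{\tau}}+\mathcal{O}\!\left(\frac{1}{\tau^{3/2}}\right)\right].
\]
The error is $i\,\sgn(\Im V_{+}-\delta)\sqrt{\tau}\cdot\mathcal{O}(u^{2})=\mathcal{O}(\tau^{-3/2})$, and since $\Re V_{+}$ and $\Im V_{+}-\delta$ stay bounded for $\delta\in K$, the implied constant depends only on $K$ and $V_{+}$, which yields the claimed uniformity (in particular the constant does not degenerate as $\delta\to\Im V_{+}$, where only the prefactor $\sgn(\Im V_{+}-\delta)$ flips).

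Finally, for the real-part formulas I would avoid simply taking real parts of the expansion above, since the resulting additive bound $\Re k_{+}(z)=\frac{|\Im V_{+}-\delta|}{2\sqrt{\tau}}+\mathcal{O}(\tau^{-3/2})$ is too weak to give a uniform \emph{relative} error as $\delta\to\Im V_{+}$ (where $\Re k_{+}\to 0$). Instead I would invoke the exact principal-branch identity $\Re\sqrt{w}=\sqrt{(|w|+\Re w)/2}$ with $w=V_{+}-z$. Here $\Re w=\Re V_{+}-\tau<0$, and the near-cancellation $|w|+\Re w=\frac{(\Im V_{+}-\delta)^{2}}{2|\Re w|}\bigl(1+\mathcal{O}(1/\tau)\bigr)$ leads directly to
\[
\Re k_{+}(z)=\frac{|\Im V_{+}-\delta|}{2\sqrt{\tau}}\left(1+\mathcal{O}\!\left(\frac{1}{\tau}\right)\right),
\]
with an implied constant uniform over $K$, and identically for $\Re k_{-}(z)$.
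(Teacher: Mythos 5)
Your proposal is correct and follows essentially the same route as the paper: factor out $\sqrt{\tau}$ and the sign $\sgn(\Im V_{+}-\delta)$, Taylor-expand $\sqrt{1-u}$ with a remainder bound uniform over $\delta\in K$, and then treat $\Re k_{\pm}(z)$ separately via an exact algebraic identity for the real part of a principal square root precisely because the additive $\mathcal{O}(\tau^{-3/2})$ bound is too weak near $\delta=\Im V_{\pm}$. Your identity $\Re\sqrt{w}=\sqrt{(|w|+\Re w)/2}$ is algebraically equivalent to the paper's $\Re\sqrt{w}=|\Im w|/\sqrt{2(|w|-\Re w)}$ (your near-cancellation step is exactly the rationalization connecting the two), so the arguments coincide in substance.
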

\begin{proof}
Let us give a proof for $k_{+}(z)$ and $\Re k_{+}(z)$, the asymptotics for $k_{-}(z)$ and $\Re k_{-}(z)$ is obtained by just changing from the plus sign to the minus sign. We start with $k_{+}(z)$. Recalling that the square root that we fixed in this article is in principal branch, thus, for $\tau>0$, we have
\begin{equation*}
\begin{aligned}
k_{+}(z)=&  \sqrt{\Re V_{+}-\tau+i(\Im V_{+}-\delta)}\\
=&i\sqrt{\tau}\sgn(\Im V_{+}-\delta)\sqrt{\frac{-\Re V_{+}-i(\Im V_{+}-\delta)}{\tau}+1}.
\end{aligned}
\end{equation*}
We consider a smooth complex-valued function $F_{\delta}$ defined on $\R$ by
\[ F_{\delta}(x)=\sqrt{ax+1},\qquad a=-\Re V_{+}-i(\Im V_{+}-\delta).\]
By applying Taylor's theorem (for example, \cite[Theo. 1.36]{Knapp05}) for $F_{\delta}$ expanding at zero, we have, for all $x$ in some neighbourhood of zero,
\[ F_{\delta}(x)=1+\frac{a}{2}x+\frac{x^2}{2} \int_{0}^{1} (1-s) F_{\delta}^{(2)}(xs)\, \dd s.\]
The second derivative of $F_{\delta}$ can be calculated explicitly as follows
\[ F_{\delta}^{(2)}(x)=-\frac{a^2}{4\sqrt{(ax+1)^3}}.\]
By considering $|x|$ sufficiently small and using the fact that $|ax+1|\gtrsim 1$ uniformly for all $\delta \in K$, we get that $|F_{\delta}^{(2)}(x)|\lesssim 1$ uniformly for all $\delta\in K$ and thus, as $|x| \to 0$,
\[ F_{\delta}(x)=1+\frac{a}{2}x+\frac{x^2}{2} + \mathcal{O}(|x|^2) \qquad \text{uniformly for all } \delta\in K.\]
Replacing $x=\frac{1}{\tau}$, we obtain the asymptotic for $k_{+}(z)$ as in statement of the lemma. Next, to obtain the expansion for $\Re k_{+}(z)$, we use the algebraic formula for the square root of a non-real complex number $w$:
\[ \Re \sqrt{w}= \frac{|\Im w |}{\sqrt{2(|w|-\Re w)}}. \]
For $\delta \neq \Im V_{+}$, we have
\begin{equation}\label{Re k+}
\Re k_{+}(z)= \frac{|\Im V_{+}-\delta|}{\sqrt{2} \sqrt{|k_{+}(z)|^2+\tau-\Re V_{+}}}.
\end{equation}
Since $|k_{+}(z)|^2= \sqrt{(\tau-\Re V_{+})^2+(\Im V_{+}-\delta)^2}$, we can show that
\begin{align*}
\sqrt{|k_{+}(z)|^2+\tau-\Re V_{+}}- \sqrt{2\tau} = \frac{-4 (\Re V_{+}) \tau +(\Im V_{+}-\delta)^2}{\left(\sqrt{|k_{+}(z)|^2+\tau -\Re V_{+}}+\sqrt{2\tau} \right) \left(|k_{+}(z)|^2+\tau +\Re V_{+} \right)}.
\end{align*}
Therefore, as $\tau \to +\infty$, we get
\[\sqrt{|k_{+}(z)|^2+\tau-\Re V_{+}}=\sqrt{2\tau} + \mathcal{O}\left(\frac{1}{\sqrt{\tau}}\right). \]
Replacing this into the denominator of the right hand side of \eqref{Re k+}, we deduce the asymptotic behavior of $\Re k_{+}(z)$ as in the statement of the lemma.
\end{proof}
Now, we restrict ourselves to $\delta \in |(\Im V_{+},\Im V_{-})|$, note that we have the following relations
\begin{equation}\label{d-ImV}
\sgn(\Im V_{+}-\delta)=\sgn(\Im V_{+}-\Im V_{-}),\qquad \sgn(\Im V_{-}-\delta)=-\sgn(\Im V_{+}-\Im V_{-}).
\end{equation}
As a consequence, Lemma \ref{Lem Asymptotic} produces the following asymptotic expansion
\begin{equation}\label{Asymptotics k}
\begin{aligned}
&|k_{+}(z)|=\sqrt{\tau}\left(1+\mathcal{O}\left(\frac{1}{|\tau|}\right)\right), &|k_{-}(z)|=\sqrt{\tau}\left(1+\mathcal{O}\left(\frac{1}{|\tau|}\right)\right),\\
&|k_{+}(z)+k_{-}(z)|=\frac{|V_{+}-V_{-}|}{2\sqrt{\tau}}\left(1+\mathcal{O}\left(\frac{1}{|\tau|}\right)\right), &|k_{+}(z)-k_{-}(z)|=2\sqrt{\tau}\left(1+\mathcal{O}\left(\frac{1}{|\tau|}\right)\right).
\end{aligned}
\end{equation}
Using these expansions, we obtain the asymptotic formulas for $|K_{++}(z)|, |K_{--}(z)|$ and $|K_{+-}(z)|$ defined in \eqref{Big K}:
\begin{align*}
&|K_{++}(z)|=\frac{2\sqrt{\tau}}{|V_{+}-V_{-}|}\left(1+\mathcal{O}\left(\frac{1}{|\tau|}\right)\right), &|K_{--}(z)|=\frac{2\sqrt{\tau}}{|V_{+}-V_{-}|}\left(1+\mathcal{O}\left(\frac{1}{|\tau|}\right)\right),\\
&|K_{+-}(z)|=\frac{2\sqrt{\tau}}{|V_{+}-V_{-}|}\left(1+\mathcal{O}\left(\frac{1}{|\tau|}\right)\right),
\end{align*}
Next step, we compute the asymptotic expansions for $A(z), B(z), C(z), D(z)$ and $\widetilde{B}(z)$ appearing in Proposition \ref{Prop Norm R1}. For $A(z), C(z), D(z)$, it is easier to obtain their asymptotic formulas by using Lemma \ref{Lem Asymptotic} for $\Re k_{\pm}(z)$ and the above estimates for $K_{++}(z)$, $K_{--}(z)$ and $K_{+-}(z)$:
\begin{align*}
A(z)=&\frac{4\tau^2}{|V_{+}-V_{-}|^2|\Im V_{+}-\delta|^2}\left(1+\mathcal{O}\left(\frac{1}{|\tau|}\right)\right),\\
C(z)=&\frac{4\tau^2}{|V_{+}-V_{-}|^2|\Im V_{-}-\delta|^2}\left(1+\mathcal{O}\left(\frac{1}{|\tau|}\right)\right),\\
D(z)=&\frac{4\tau^2}{|V_{+}-V_{-}|^2|\Im V_{+}-\delta||\Im V_{-}-\delta|}\left(1+\mathcal{O}\left(\frac{1}{|\tau|}\right)\right).
\end{align*}
For $B(z)$ and $\widetilde{B}(z)$, we need more efforts, but it is a straightforward calculation:
\begin{align*}
B(z)=&\frac{2\tau \left(1+\mathcal{O}\left(\frac{1}{|\tau|}\right)\right)}{|V_{+}-V_{-}||\Im V_{+}-\delta|^{1/2}|\Im V_{-}-\delta|^{1/2}}\\
&\times\left(\frac{\frac{4\tau}{|V_{+}-V_{-}|}\left(1+\mathcal{O}\left(\frac{1}{|\tau|}\right)\right)}{|\Im V_{+}-\delta|}+\frac{\frac{4\tau}{|V_{+}-V_{-}|}\left(1+\mathcal{O}\left(\frac{1}{|\tau|}\right)\right)}{|\Im V_{-}-\delta|} \right)\\
=&\frac{2\tau \left(1+\mathcal{O}\left(\frac{1}{|\tau|}\right)\right)}{|V_{+}-V_{-}||\Im V_{+}-\delta|^{1/2}|\Im V_{-}-\delta|^{1/2}} \frac{4|\Im V_{+}-\Im V_{-}|\tau  \left(1+\mathcal{O}\left(\frac{1}{|\tau|}\right)\right)}{|V_{+}-V_{-}||\Im V_{+}-\delta||\Im V_{-}-\delta|}\\
=&\frac{8|\Im V_{+}-\Im V_{-}|\tau^2}{|V_{+}-V_{-}|^2|\Im V_{+}-\delta|^{3/2}|\Im V_{-}-\delta|^{3/2}}\left(1+\mathcal{O}\left(\frac{1}{|\tau|}\right)\right).
\end{align*}
Here, in the second equality, we used \eqref{d-ImV} to obtain 
\begin{equation}\label{Equality for delta}
|\Im V_{+}-\delta|+|\Im V_{-}-\delta|=|\Im V_{+}- \Im V_{-}|\qquad \text{for all }\delta \in |(\Im V_{+}, \Im V_{-})|.
\end{equation}
From the formula of $\widetilde{B}(z)$, in order to have its asymptotic, we firstly need to calculate $\Re\left( K_{++}(z) \overline{K_{+-}(z)}\right)$ and $\Re\left( K_{--}(z) \overline{K_{\pm}(z)}\right)$. From the definitions of $K_{++}(z)$ and $K_{+-}(z)$ in \eqref{Big K} and thanks to Lemma \ref{Lem Asymptotic}, \eqref{d-ImV} and \eqref{Asymptotics k}, we get
\begin{align*}
\Re\left( K_{++}(z) \overline{K_{+-}(z)}\right)= &\frac{1}{|k_{+}(z)+k_{-}(z)|^2} \Re \left( \frac{k_{+}(z)-k_{-}(z)}{2k_{+}(z)}\right)\\ =&\frac{4\tau}{|V_{+}-V_{-}|^2}\left(1+\mathcal{O}\left(\frac{1}{|\tau|}\right)\right)\Re \left(\frac{2i\sqrt{\tau}+\mathcal{O}\left(\frac{1}{|\tau|^{1/2}}\right)}{2i\sqrt{\tau}+\mathcal{O}\left(\frac{1}{|\tau|^{1/2}}\right)} \right)\\
=&\frac{4\tau}{|V_{+}-V_{-}|^2}\left(1+\mathcal{O}\left(\frac{1}{|\tau|}\right)\right)\left(1+\mathcal{O}\left(\frac{1}{|\tau|}\right)\right)\\
=&\frac{4\tau}{|V_{+}-V_{-}|^2}\left(1+\mathcal{O}\left(\frac{1}{|\tau|}\right)\right),
\end{align*}
and in the same way, we also obtain
\[ \Re\left( K_{--}(z) \overline{K_{\pm}(z)}\right)=\frac{4\tau}{|V_{+}-V_{-}|^2}\left(1+\mathcal{O}\left(\frac{1}{|\tau|}\right)\right).\]
From these estimates, the asymptotic expansion for $\widetilde{B}(z)$ is followed from Lemma \ref{Lem Asymptotic} (for $\Re k_{\pm}(z)$) and \eqref{Equality for delta}:
\begin{align*}
\widetilde{B}(z)=& \frac{\sqrt{\tau}\left(1+\mathcal{O}\left(\frac{1}{|\tau|}\right)\right)}{|\Im V_{+}-\delta|^{1/2}|\Im V_{-}-\delta|^{1/2}}\left(\frac{\frac{8\tau^{3/2}}{|V_{+}-V_{-}|^2}\left(1+\mathcal{O}\left(\frac{1}{|\tau|}\right)\right)}{|\Im V_{+}-\delta|}+\frac{\frac{8\tau^{3/2}}{|V_{+}-V_{-}|^2}\left(1+\mathcal{O}\left(\frac{1}{|\tau|}\right)\right)}{|\Im V_{-}-\delta|} \right)\\
=&\frac{8|\Im V_{+}-\Im V_{-}|\tau^2}{|V_{+}-V_{-}|^2|\Im V_{+}-\delta|^{3/2}|\Im V_{-}-\delta|^{3/2}}\left(1+\mathcal{O}\left(\frac{1}{|\tau|}\right)\right).
\end{align*}
We notice that $\widetilde{B}(z)$ has the same asymptotic limit as $B(z)$, this makes the upper bound and the lower bound of $\Vert \mathcal{R}_{1,z} \Vert$ in \eqref{Bound R1} also have the same asymptotic limit as $\tau \to +\infty$. Now, we can calculate the asymptotic behavior of all terms appearing in these bounds:
\begin{align*}
\sqrt{\left(A(z)-C(z)\right)^2+B(z)^2}=&\frac{4|\Im V_{+}-\Im V_{-}|^2\tau^2}{|V_{+}-V_{-}|^2|\Im V_{+}-\delta|^2|\Im V_{-}-\delta|^2}\left(1+\mathcal{O}\left(\frac{1}{|\tau|}\right)\right),\\
A(z)+C(z)+2D(z)=&\frac{4|\Im V_{+}-\Im V_{-}|^2\tau^2}{|V_{+}-V_{-}|^2|\Im V_{+}-\delta|^2|\Im V_{-}-\delta|^2}\left(1+\mathcal{O}\left(\frac{1}{|\tau|}\right)\right).
\end{align*}
Here, in the first and the second equalities, respectively, we used the fact that (follows from \eqref{d-ImV}), for all $\delta \in |(\Im V_{+},\Im V_{-})|$,
\begin{equation}\label{Relation Im V}
\begin{aligned}
|\Im V_{+}+\Im V_{-}-2\delta|^2+4 |\Im V_{+} -\delta||\Im V_{+} -\delta|=|\Im V_{+} -\Im V_{-}|^2,\\
|\Im V_{+} -\delta|^2+|\Im V_{+} -\delta|^2+ 2|\Im V_{+} -\delta||\Im V_{-} -\delta|=|\Im V_{+} -\Im V_{-}|^2.
\end{aligned}
\end{equation}
Therefore, the norm of $\mathcal{R}_{1,z}$ has the following asymptotic behavior 
\begin{equation}\label{Expansion R1}
\Vert \mathcal{R}_{1,z} \Vert = \frac{2|\Im V_{+}-\Im V_{-}|\tau}{|V_{+}-V_{-}||\Im V_{+}-\delta||\Im V_{-}-\delta|}\left(1+\mathcal{O}\left(\frac{1}{|\tau|}\right)\right).
\end{equation}
Next step, we will show that $\Vert \mathcal{R}_{2,z} \Vert$ is merely a small perturbation compared with $\Vert \mathcal{R}_{1,z} \Vert$ as $\tau\to+\infty$. To do that, the upper bound for the norm of $\mathcal{R}_{2,z}$ will be established by applying the Schur test. Considering the kernel of $\mathcal{R}_{2,z}$ given in \eqref{R2}, we have
\begin{align*}
\int_{\R} |\mathcal{R}_{2,z}(x,y)| \, \dd y =\left\{
\begin{aligned}
&\frac{1}{2|k_{+}(z)|}\frac{2-e^{-\Re k_{+}(z)x}}{\Re k_{+}(z)} &&\text{ for }x>0,\\
&\frac{1}{2|k_{-}(z)|}\frac{2-e^{\Re k_{-}(z)x}}{\Re k_{-}(z)} &&\text{ for }x<0,
\end{aligned}
\right.
\end{align*}
It yields that, for all $z\in \rho(\mathscr{L})$ (hence, $\Re k_{\pm}(z)>0$), 
\[ \sup_{x\in \R} \int_{\R} |\mathcal{R}_{2,z}(x,y)| \, \dd y =\max\left( \frac{1}{|k_{+}(z)| \Re k_{+}(z)}, \frac{1}{|k_{-}(z)| \Re k_{-}(z)}\right).\]
Since $\mathcal{R}_{2,z}(x,y)$ is symmetric for almost everywhere $(x,y)\in \R^2$, thus, by the Schur test, we obtain
\begin{equation}\label{Bound above norm R2}
\Vert \mathcal{R}_{2,z} \Vert \leq  \frac{1}{|k_{+}(z)| \Re k_{+}(z)}+ \frac{1}{|k_{-}(z)| \Re k_{-}(z)}=\frac{2|\Im V_{+}-\Im V_{-}|}{|\Im V_{+}-\delta||\Im V_{-}-\delta|}\left(1+\mathcal{O}\left(\frac{1}{|\tau|}\right)\right).
\end{equation}
Then, by considering \eqref{Expansion R1}, we obtain what we want to show, that is
\begin{align*}
\frac{\Vert \mathcal{R}_{2,z} \Vert }{\Vert \mathcal{R}_{1,z} \Vert}= \mathcal{O}\left(\frac{1}{|\tau|}\right).
\end{align*}
By applying triangular inequality, it yields that
\begin{equation}\label{Resolvent Estimate}
\left\Vert\left(\mathscr{L}-z \right)^{-1}\right\Vert = \Vert \mathcal{R}_{1,z} \Vert\left(1+\mathcal{O}\left(\frac{1}{|\tau|}\right)\right).
\end{equation}
Thus, the asymptotic expansion for $\Vert \left(\mathscr{L}-z\right)^{-1} \Vert$ is deduced from \eqref{Resolvent Estimate} and \eqref{Expansion R1}.
\subsection{(Non)-triviality of the pseudospectrum of $\mathscr{L}$: Proof of Corollary \ref{Cor Trivial}}\label{Subsec Non-trivial}
Let us consider two situations mentioned in this corollary:
\begin{enumerate}[label=\textbf{\textup{(\arabic*)}}]
\item If $\Im V_{+}=\Im V_{-}$, then, thanks to \eqref{Num Range} and \eqref{Spectrum of L}, we have
\[ \overline{\textup{Num}(\mathscr{L})}=[\widehat{V},+\infty) = \sigma(\mathscr{L}),\]
where $\widehat{V}$ defined as in \eqref{V hat}. By using Proposition \ref{Prop Resolvent out} in which $W(\mathscr{L})=\rho(\mathscr{L})$, we obtain, for all $z\in \rho(\mathscr{L})$,
\begin{equation}\label{Norm is 1/d}
\Vert (\mathscr{L}-z)^{-1} \Vert = \frac{1}{\textup{dist}(z,\sigma(\mathscr{L}))}.
\end{equation}
From the definition of pseudospectrum \eqref{Def Pseuspectrum 1}, we deduce that the $\varepsilon$-pseudospectrum of $\mathscr{L}$ is exact the $\varepsilon$-neighbourhood of the spectrum $\sigma(\mathscr{L})$. In fact, the formula \eqref{Norm is 1/d} is true for general unbounded normal operator $T$ (in this case, $\mathscr{L}$ is a normal operator, by Proposition \ref{Prop Property}), indeed, since $T$ is normal, its resolvent is a bounded normal operator (see \cite[Proposition 3.26(v)]{Schmudgen12}), therefore, the norm of the resolvent is equal to its spectral radius \cite[Proposition 3.27]{Cheverry-Raymond21}:
 \begin{align*}
 \Vert (T-z)^{-1} \Vert= &\sup \left\{ |\lambda|: \lambda \in \sigma((T-z)^{-1})\right\}=\sup \left\{ \frac{1}{|\eta-z|} :\eta \in \sigma(T)\right\}\\
 =&\frac{1}{\inf\{|\eta-z|: \eta \in \sigma(T) \}}=\frac{1}{\textup{dist}(z,\sigma(T))}.
 \end{align*}
 Here, in the second equality, we have used the fact that $\lambda \in \sigma((T-z)^{-1})\setminus \{0\}$ if and only if $\lambda=\frac{1}{\eta-z}$ for some $\eta\in \sigma(T)$.
\item If $\Im V_{+}\neq \Im V_{-}$,  Theorem \ref{Theo Norm of resolvent} implies that, for all $\varepsilon'\in (0,1)$, there exists $M>0$ such that, for all $\Re z>M$ and for all $\Im z \in |(\Im V_{+}, \Im V_{-})|$, we have
\[ \left\Vert (\mathscr{L}-z)^{-1}\right\Vert \geq (1-\varepsilon')\frac{2|\Im V_{+}-\Im V_{-}|}{|V_{+}-V_{-}|} \frac{\Re z}{|\Im V_{+}-\Im z||\Im V_{-}-\Im z|}. \]
Thus, for any $\varepsilon>0$, if we consider
\[ \Re z>\frac{1}{\varepsilon(1-\varepsilon')} \frac{|V_{+}-V_{-}||\Im V_{+}-\Im z||\Im V_{-}-\Im z|}{2|\Im V_{+}-\Im V_{-}|}\]
then $\left\Vert (\mathscr{L}-z)^{-1}\right\Vert >\frac{1}{\varepsilon}$ and the conclusion in this case follows.
\end{enumerate}
In order to see the non-triviality of the pseudospectrum of $\mathscr{L}$ when $\Im V_{+}\neq \Im V_{-}$, we just need to take $\varepsilon$ sufficiently small and consider $z_{\varepsilon}$ on the line $\Im z = \frac{\Im V_{+}+\Im V_{-}}{2}$ whose real part is large enough such that $z_{\varepsilon}\in \sigma_{\varepsilon}(\mathscr{L})$, it is easy to see that $z_{\varepsilon}$ always stay away from the spectrum at distance $\frac{1}{2}|\Im V_{+}- \Im V_{-}|$.
\subsection{Accurate pseudomode for $\mathscr{L}$: Proof of Theorem \ref{Theo Pseudomode}}\label{Subsec Pseudomode}
The aim of this part is to construct the pseudomode for the operator $\mathscr{L}$. Let us fix $z\in \rho(\mathscr{L})$, we construct the Ansatz in the form
\[ \Psi_{z}(x)= \Psi_{1,z}(x) + \Psi_{2,z}(x), \]
where
\begin{equation*}
\begin{aligned}
\Psi_{1,z}(x) &= n_{1}(z) e^{k_{-}(z)x}\textbf{\textup{1}}_{\R_{-}}(x)+p_{1}(z) e^{-k_{+}(z)x}\textbf{\textup{1}}_{\R_{+}}(x),\\
\Psi_{2,z}(x) &= n_{2}(z) e^{\overline{k_{-}(z)}x}\textbf{\textup{1}}_{\R_{-}}(x)+p_{2}(z) e^{-\overline{k_{+}(z)}x}\textbf{\textup{1}}_{\R_{+}}(x),
\end{aligned}
\end{equation*}
in which $n_{1}(z),n_{2}(z),p_{1}(z),p_{2}(z)$ are complex numbers to be determined later. Here, we notice that $\Psi_{1,z}$ and $\Psi_{2,z}$ satisfy, respectively, for all $x\in \R \setminus \{0\}$,
\begin{equation}\label{Psi12}
\left(-\frac{\dd^2}{\dd x^2} +V(x)-z \right) \Psi_{1,z}(x)=0, \qquad \left(-\frac{\dd^2}{\dd x^2} +\overline{V(x)}-\overline{z} \right) \Psi_{2,z}(x)=0.
\end{equation}
In order $\Psi_{z}$ to belong to $ H^2(\R)$, the domain of $\mathscr{L}$, it is necessary to satisfy the conditions $\Psi_{z}(0^{+})=\Psi_{z}(0^{-})$ and $\Psi_{z}'(0^{+})=\Psi_{z}'(0^{-})$. These conditions read
\begin{equation}\label{Continuity Cond}
\left\{
\begin{aligned}
&n_{1}(z)+n_{2}(z)=p_{1}(z)+p_{2}(z),\\
&k_{-}(z)n_{1}(z)+\overline{k_{-}(z)}n_{2}(z)=-k_{+}(z)p_{1}(z)-\overline{k_{+}(z)}p_{2}(z),
\end{aligned}
\right.
\end{equation}
This allows us to compute $n_{1}(z)$ and $p_{1}(z)$ in terms of $n_{2}(z)$ and $p_{2}(z)$ as follows
\begin{equation}\label{System n1p1}
\left\{ \begin{aligned}
&n_{1}(z)=-\frac{k_{+}(z)+\overline{k_{- }(z)}}{k_{+}(z)+k_{-}(z)}n_{2}(z)+\frac{k_{+}(z)-\overline{k_{+}(z)}}{k_{+}(z)+k_{-}(z)}p_{2}(z),\\
&p_{1}(z)=\frac{k_{-}(z)-\overline{k_{-}(z)}}{k_{+}(z)+k_{-}(z)}n_{2}(z)-\frac{\overline{k_{+}(z)}+k_{-}(z)}{k_{+}(z)+k_{-}(z)}p_{2}(z).
\end{aligned}
\right.
\end{equation}
In the following, we will show that this Ansatz $\Psi_{z}$ belongs to $H^2(\R)$ and $n_{2}$ and $p_{2}$ can be calculated through optimizing (in fact, minimizing) the quotient $\frac{\Vert (\mathscr{L}-z)\Psi_{z} \Vert}{\Vert \Psi_{z} \Vert}$. Indeed, assume that $\Psi_{z}$ satisfies conditions in \eqref{Continuity Cond}, since $z\in \rho(\mathscr{L})$, we have $\Re k_{\pm}(z)= \Re \overline{k_{\pm}(z)}>0$, it is easy to see that $\Psi_{z}\in L^2(\R)$. Let $\varphi \in C_{c}^{1}(\R)$, by integration by parts on $\R_{-}$ and on $\R_{+}$ and using the first equality in \eqref{Continuity Cond}, we get
\begin{align*}
\int_{\R} \Psi_{z}(x) \varphi'(x)\, \dd x= &(n_{1}(z)+n_{2}(z)-p_{1}(z)-p_{2}(z))\varphi(0)\\
&- \int_{-\infty}^{0} \left(n_{1}(z)k_{-}(z) e^{k_{-}(z)x}+n_{2}(z) \overline{k_{-}(z)} e^{\overline{k_{-}(z)}x}\right)\varphi(x)\,\dd x\\
&+ \int_{0}^{+\infty} \left(p_{1}(z)k_{+}(z) e^{-k_{+}(z)x}+p_{2}(z) \overline{k_{-}(z)} e^{-\overline{k_{+}(z)}x}\right)\varphi(x)\,\dd x\\
=&- \int_{-\infty}^{0} \left(n_{1}(z)k_{-}(z) e^{k_{-}(z)x}+n_{2}(z) \overline{k_{-}(z)} e^{\overline{k_{-}(z)}x}\right)\varphi(x)\,\dd x\\
&+ \int_{0}^{+\infty} \left(p_{1}(z)k_{+}(z) e^{-k_{+}(z)x}+p_{2}(z) \overline{k_{+}(z)} e^{-\overline{k_{+}(z)}x}\right)\varphi(x)\,\dd x.
\end{align*}
Therefore, the distributional derivative of $\Psi_{z}$ is given by
\begin{equation}\label{Psi'}
\begin{aligned}
\Psi_{z}'(x) = &\left( n_{1}(z)k_{-}(z) e^{k_{-}(z)x}+n_{2}(z)\overline{k_{-}(z)} e^{\overline{k_{-}(z)}x}\right)\textbf{\textup{1}}_{\R_{-}}(x)\\
&+\left(-p_{1}(z) k_{+}(z) e^{-k_{+}(z)x}-p_{2}(z)\overline{k_{+}(z)} e^{-\overline{k_{+}(z)}x}\right)\textbf{\textup{1}}_{\R_{+}}(x),
\end{aligned}
\end{equation}
which also belongs to $L^2(\R)$. In other words, $\Psi_{z}\in H^1(\R)$. In the same manner, by using the second equality in \eqref{Continuity Cond}, we can obtain the  distributional second derivative of $\Psi_{z}$ as follows
\begin{align*}
\Psi_{z}''(x) = &\left( n_{1}(z)k_{-}(z)^2 e^{k_{-}(z)x}+n_{2}(z)\overline{k_{-}(z)}^2 e^{\overline{k_{-}(z)}x}\right)\textbf{\textup{1}}_{\R_{-}}(x)\\
&+\left(p_{1}(z) k_{+}(z)^2 e^{-k_{+}(z)x}+p_{2}(z)\overline{k_{+}(z)}^2 e^{-\overline{k_{+}(z)}x}\right)\textbf{\textup{1}}_{\R_{+}}(x),
\end{align*}
and it belongs to $L^2(\R)$. In conclusion, $\Psi_{z} \in H^2(\R)$. 

Now, we will send $z$ to infinity in the numerical range to get the asymptotic behavior of the quotient $\frac{\Vert (\mathscr{L}-z)\Psi_{z} \Vert}{\Vert \Psi_{z} \Vert}$. Thanks to Lemma \ref{Lem Asymptotic} and \eqref{d-ImV}, we obtain the following expressions
\begin{equation}\label{Asymtotic 2}
\begin{aligned}
&k_{+}(z) + \overline{k_{-}(z)}=\sgn(\Im V_{+}-\Im V_{-})2 i\sqrt{\tau}+\mathcal{O}\left(\frac{1}{|\tau|^{1/2}}\right),\\
&k_{+}(z) - \overline{k_{+}(z)}=\sgn(\Im V_{+}-\Im V_{-})2 i\sqrt{\tau}+\mathcal{O}\left(\frac{1}{|\tau|^{1/2}}\right),\\
&k_{-}(z) - \overline{k_{-}(z)}=-\sgn(\Im V_{+}-\Im V_{-})2 i\sqrt{\tau}+\mathcal{O}\left(\frac{1}{|\tau|^{1/2}}\right),\\
&\overline{k_{+}(z)}+k_{-}(z) =-\sgn(\Im V_{+}-\Im V_{-})2 i\sqrt{\tau}+\mathcal{O}\left(\frac{1}{|\tau|^{1/2}}\right).
\end{aligned}
\end{equation}
Now, we impose the following conditions on the coefficients $n_{2}(z)$ and $p_{2}(z)$
\begin{equation}\label{Ass 1}
|n_{2}(z)|=\mathcal{O}(1), \qquad |p_{2}(z)|=\mathcal{O}(1),\qquad 1= \mathcal{O}(|n_{2}(z)-p_{2}(z)|),
\end{equation}
as $\tau\to+\infty$ and uniformly for all $\delta \in |(\Im V_{+}, \Im V_{-})|$. Then, it follows from the first equation in \eqref{System n1p1}, \eqref{Asymtotic 2} and \eqref{Asymptotics k} that
\begin{align}\label{n1}
|n_{1}(z)|^2= &\left(\left\vert k_{+}(z)+\overline{k_{-}(z)} \right\vert^2 |n_{2}(z)|^2+\left\vert k_{+}(z)-\overline{k_{+}(z)}\right\vert^2 |p_{2}(z)|^2\right.\nonumber\\
&\left.-2 \Re \left( (k_{+}(z)+\overline{k_{-}(z)})\overline{(k_{+}(z)-\overline{k_{+}(z)})}n_{2}(z)\overline{p_{2}(z)}\right)\right)\div |k_{+}(z)+k_{-}(z) |^2\nonumber\\
=&\frac{4\tau|n_{2}(z)-p_{2}(z)|^2\left(1+\mathcal{O}\left(\frac{1}{|\tau|}\right)\right)}{\frac{|V_{+}-V_{-}|^2}{4\tau}\left(1+\mathcal{O}\left(\frac{1}{|\tau|}\right)\right)},\nonumber\\
=&\frac{16\tau^2|n_{2}(z)-p_{2}(z)|^2}{|V_{+}-V_{-}|^2}\left(1+\mathcal{O}\left(\frac{1}{|\tau|}\right)\right).
\end{align}
Similarly, we also obtain the asymptotic expression for $|p_{1}(z)|^2$,
\begin{equation}\label{p1}
|p_{1}(z)|^2 = \frac{16\tau^2|n_{2}(z)-p_{2}(z)|^2}{|V_{+}-V_{-}|^2}\left(1+\mathcal{O}\left(\frac{1}{|\tau|}\right)\right).
\end{equation}
With $\Psi_{1,z}$ and $\Psi_{2,z}$ defined as in \eqref{Psi12}, we get
\begin{equation}\label{Norm Psi1}
\Vert \Psi_{1,z} \Vert^2 = \frac{|n_{1}(z)|^2}{2\Re k_{-}(z)} +\frac{|p_{1}(z)|^2}{2\Re k_{+}(z)},\qquad\Vert \Psi_{2,z} \Vert^2 = \frac{|n_{2}(z)|^2}{2\Re k_{-}(z)} +\frac{|p_{2}(z)|^2}{2\Re k_{+}(z)}.
\end{equation}
By using the assumption \eqref{Ass 1} for $n_{2}(z), p_{2}(z)$ and \eqref{n1}, \eqref{p1}, $\Vert \Psi_{z,2} \Vert$ can be shown to be a small perturbation compared with $\Vert \Psi_{z,1} \Vert$:
\begin{align*}
\frac{\Vert \Psi_{2,z} \Vert^2}{\Vert \Psi_{1,z} \Vert^2 }\leq \frac{|n_{2}(z)|^2}{|n_{1}(z)|^2}+\frac{|p_{2}(z)|^2}{|p_{1}(z)|^2} =\mathcal{O}\left(\frac{1}{|\tau|^2}\right).
\end{align*}
Thanks to the triangle inequality, it yields that
\begin{equation}\label{Norm Psi}
\Vert \Psi_{z} \Vert = \Vert \Psi_{1,z} \Vert \left(1 +\mathcal{O}\left(\frac{1}{|\tau|}\right) \right).
\end{equation}
From \eqref{Psi12}, it yields that, for all $x\in \R\setminus\{0\}$, 
\[ \left(\mathscr{L}-z\right)\Psi_{z}(x)=2i(\Im V(x)-\Im z) \Psi_{2,z}(x), \]
and the square of its norm is given by
\begin{align*}
\Vert (\mathscr{L}-z)\Psi_{z} \Vert^2=\frac{2|n_{2}(z)|^2}{\Re k_{-}(z)}|\Im V_{-}-\delta|^2 +\frac{2|p_{2}(z)|^2}{\Re k_{+}(z)}|\Im V_{+}-\delta|^2 .
\end{align*}
Applying \eqref{Norm Psi} and \eqref{Norm Psi1}, we have
\begin{align*}
\frac{\Vert (\mathscr{L}-z)\Psi_{z} \Vert^2}{\Vert \Psi_{z} \Vert^2}=& \frac{\frac{2|\Im V_{-}-\delta|^2}{\Re k_{-}(z)}|n_{2}(z)|^2 +\frac{2|\Im V_{+}-\delta| ^2}{\Re k_{+}(z)}|p_{2}(z)|^2}{\frac{|n_{1}(z)|^2}{2\Re k_{-}(z)}+\frac{|p_{1}(z)|^2}{2\Re k_{+}(z)}}\left(1 +\mathcal{O}\left(\frac{1}{|\tau|}\right) \right)\\
=& 4\frac{\Re k_{+}(z)|\Im V_{-}-\delta|^2|n_{2}(z)|^2+\Re k_{-}(z)|\Im V_{+}-\delta|^2|p_{2}(z)|^2}{\Re k_{+}(z)|n_{1}(z)|^2+\Re k_{-}(z)|p_{1}(z)|^2}\left(1 +\mathcal{O}\left(\frac{1}{|\tau|}\right) \right).
\end{align*}
Employing Lemma \ref{Lem Asymptotic} for $\Re k_{\pm}(z)$, \eqref{Ass 1}, \eqref{n1},  and \eqref{p1}, we get
\begin{align*}
\Re k_{+}(z)|\Im V_{-}-\delta|^2|n_{2}(z)|^2=&\frac{|\Im V_{+}-\delta||\Im V_{-}-\delta|^2}{2\sqrt{\tau}}|n_{2}(z)|^2\left(1+\mathcal{O}\left(\frac{1}{|\tau|}\right)\right),\\
\Re k_{-}(z)|\Im V_{+}-\delta|^2|p_{2}(z)|^2=&\frac{|\Im V_{+}-\delta|^2|\Im V_{-}-\delta|}{2\sqrt{\tau}}|p_{2}(z)|^2\left(1+\mathcal{O}\left(\frac{1}{|\tau|}\right)\right),\\
\Re k_{+}(z)|n_{1}(z)|^2=&\frac{8\tau^{3/2}|\Im V_{+}-\delta|}{|V_{+}-V_{-}|^2}|n_{2}(z)-p_{2}(z)|^2\left(1+\mathcal{O}\left(\frac{1}{|\tau|}\right)\right),\\
\Re k_{-}(z)|p_{1}(z)|^2=&\frac{8\tau^{3/2}|\Im V_{-}-\delta|}{|V_{+}-V_{-}|^2}|n_{2}(z)-p_{2}(z)|^2\left(1+\mathcal{O}\left(\frac{1}{|\tau|}\right)\right).
\end{align*}
Then, by using \eqref{Equality for delta}, the quotient has the asymptotic behavior as follows
\begin{equation}\label{quotient}
\begin{aligned}
\frac{\Vert (\mathscr{L}-z)\Psi_{z} \Vert^2}{\Vert \Psi_{z} \Vert^2}=&\frac{|V_{+}-V_{-}|^2}{4|\Im V_{+}-\Im V_{-}|} \frac{|\Im V_{+}-\delta||\Im V_{-}-\delta|}{\tau^2} \\
& \times \frac{|\Im V_{-}-\delta| |n_{2}(z)|^2+|\Im V_{+}-\delta||p_{2}(z)|^2}{|n_{2}(z)-p_{2}(z)|^2}\left(1+\mathcal{O}\left(\frac{1}{|\tau|}\right)\right).
\end{aligned}
\end{equation}
Since when $z\in \rho(\mathscr{L})$, the inverse of the resolvent's norm can be expressed as follows
\[ \Vert \left( \mathscr{L}-z\right)^{-1} \Vert^{-1} = \inf_{\Psi \in H^2(\R)\setminus \{0\}} \frac{\Vert (\mathscr{L}-z)\Psi \Vert}{\Vert \Psi \Vert}.\]
From this expression, $n_{2}(z)$ and $p_{2}(z)$ will be chosen in order to minimize the function
\[F(n_{2}(z),p_{2}(z))\coloneqq\frac{|\Im V_{-}-\delta| |n_{2}(z)|^2+|\Im V_{+}-\delta||p_{2}(z)|^2}{|n_{2}(z)-p_{2}(z)|^2}. \]
From \eqref{System n1p1}, in order to have $\Psi_{z} \neq 0$, at least one of two numbers $n_{2}(z)$ or $p_{2}(z)$ must be non-zero, we assume that $p_{2}\neq  0$. By dividing both numerator and denominator of $F(n_{2}(z),p_{2}(z))$ by $|p_{2}(z)|^2$, our problem turns into searching the infimum of a one-variable function 
\[ f(x)\coloneqq\frac{|\Im V_{-}-\delta| x^2+|\Im V_{+}-\delta|}{(x-1)^2}.\]
For simplification, we search its infimum on $\R$: its derivative is
\[ f'(x)=-2\frac{|\Im V_{-}-\delta| x+|\Im V_{+}-\delta|}{(x-1)^3},\]
By investigation function $f(x)$, we will see that it attains its global minimum at the point $x_{0}=-\frac{|\Im V_{+}-\delta|}{|\Im V_{-}-\delta|}$ and using \eqref{Equality for delta}, we can calculate the minimum value of $f(x)$
\[ f(x_{0})= \frac{|\Im V_{+}-\delta||\Im V_{-}-\delta|}{|\Im V_{+}-\Im V_{-}|}.\]
Therefore, we need to choose $n_{2}(z)$ and $p_{2}(z)$ such that $\frac{n_{2}(z)}{p_{2}(z)}=-\frac{|\Im V_{+}-\delta|}{|\Im V_{-}-\delta|}$ and satisfying the assumption \eqref{Ass 1}. We have $|n_{2}(z)-p_{2}(z)|= \frac{|\Im V_{+}-\Im V_{-}|}{|\Im V_{-}-\delta|} |p_{2}(z)|$, thus, in order to have $1\lesssim |n_{2}(z)-p_{2}(z)| \lesssim 1$, we can choose $p_{2}(z)$ in the form $p_{2}(z)=C |\Im V_{-}-\delta|$ where $C$ is a non-zero complex constant and thus, $n_{2}(z)=-C |\Im V_{+}-\delta|$. However, by replacing these numbers $n_{2}(z)$, $p_{2}(z)$ into two first equations in \eqref{Constants} to have $n_{1}(z),p_{1}(z)$, we see that the constant $C$ merely plays the role of a (normalizing) multiplying constant. Therefore, we can choose $C=1$ and it completes the proof of Theorem \ref{Theo Pseudomode}.
\section{Pseudospectrum of the complex point interaction}\label{Section Complex point interaction}
\subsection{Stability of essential spectra under point interaction: Proof of Lemma \ref{Lem Essential Spectra delta}}
We prove this lemma by finite extensional method, cf. \cite[Sec. IX.4]{Edmunds-Evans18}. This method says that if $\mathscr{L}$ is a closed $n$-dimensional extension of a closed, densely defined operator $\mathscr{T}$, \emph{i.e.}, $\mathscr{T}\subset \mathscr{L}$ and there is an $n$-dimensional subspace $F$ of $\textup{Dom}(\mathscr{L})$ such that $\textup{Dom}(\mathscr{L})=\textup{Dom}(\mathscr{T})\dotplus F$ (\emph{i.e.}, $\textup{Dom}(\mathscr{L})$ is a direct sum of two linear vector spaces $\textup{Dom}(\mathscr{T})$ and $F$), then the third three essential spectra of $\mathscr{L}$ and $\mathscr{T}$ are identical: $\sigma_{\textup{ek}}(\mathscr{L})=\sigma_{\textup{ek}}(\mathscr{T})$ for $\textup{k}\in [[1,3]]$. We can not apply directly this method to two operators $\mathscr{L}$ and $\mathscr{L}_{\alpha}$, since their domains are not extensions of each other. We need to find a mediator operator between two operators $\mathscr{L}$ and $\mathscr{L}_{\alpha}$ so that we obtain the transitive property between essential spectra of these operators. Let us make the previous sentence to be clear: We consider an operator $\mathscr{T}$ which is a common restriction of $\mathscr{L}$ and $\mathscr{L}_{\alpha}$, defined by
\begin{align*}
\textup{Dom}(\mathscr{T})&=\{u\in H^{1}(\R)\cap H^2(\R): u(0)=u'(0)=0 \},\\ 
 \mathscr{T}u&=-u''+V(x)u,\qquad \forall u\in  \textup{Dom}(\mathscr{T}).
\end{align*}
and we will show that
\begin{enumerate}[label=\textup{\textbf{(\alph*)}}]
\item \label{Closedness} $\mathscr{T}$ is a closed densely defined operator;
\item \label{Finite exten 1} $\mathscr{L}$ is a $2$-dimensional extension of $\mathscr{T}$;
\item \label{Finite exten 2} $\mathscr{L}_{\alpha}$ is a $2$-dimensional extension of $\mathscr{T}$.
\end{enumerate}
Let us start with \ref{Closedness}. It is not hard to prove that $\textup{Dom}(\mathscr{T})$ is dense in $L^2(\R)$. Indeed, we take $f\in L^2(\R)$ and we consider a sequence of smooth function $\left(\xi_{n}\right)_{n\geq 1}$ such that $\textup{supp }\xi_{n} \subset \{x\in \R: 0<|x|<n+1\}$ and $\xi_{n}(x)=1$ for all $x\in \R$ satisfying $\frac{1}{n}\leq |x| \leq n$, then $\xi_{n} f \in \textup{Dom}(\mathscr{T})$ and $\Vert \xi_{n} f - f \Vert \xrightarrow{n\to +\infty} 0$ by the dominated convergence theorem. To prove that $\mathscr{T}$ is closed, we consider the graph norm $\Vert u \Vert_{\mathscr{T}}\coloneqq \sqrt{\Vert \mathscr{T}u \Vert^2+\Vert u \Vert^2 }$ defined for all $u\in \textup{Dom}(\mathscr{T})$, this norm is equivalent to the norm in $H^2(\R)$. Therefore, take any Cauchy sequence $u_{n}$ in $\textup{Dom}(\mathscr{T})$ under the graph norm, this sequence will converge to a function $u$ in  $H^2(\R)$. By the Sobolev embedding of $H^2(\R)$ into $L^{\infty}(\R,\C^2)$, we have $u(0)=u'(0)=0$. Thus, we have shown that $\left(\textup{Dom}(\mathscr{T}), \Vert \cdot \Vert_{\mathscr{T}}\right)$ is complete or equivalently, $\mathscr{T}$ is closed.

Next, we prove \ref{Finite exten 1}. Let $\phi \in \C_{c}^{\infty}(\R)$ such that $\phi(x)=1$ in some neighbourhood of zero. Take $u\in \textup{Dom}(\mathscr{L})$, then, we can show that
\[ u(x)-\left[u(0)\phi(x)+u'(0)x\phi(x)\right]\in \textup{Dom}(\mathscr{T}).\]
It means that 
\[ \textup{Dom}(\mathscr{L})\subset \textup{Dom}(\mathscr{T}) +F,\qquad F:=\textup{span}\{\phi(x), x\phi(x)\}.\]
Note that $\dim F=2$ (since $\phi(x)$ and $x\phi(x)$ are linear independent). Since both $\textup{Dom}(\mathscr{T})$ and $F$ are subspaces of $\textup{Dom}(\mathscr{L})$, therefore, we have the direction $\textup{Dom}(\mathscr{L})\supset \textup{Dom}(\mathscr{T}) +F$. Furthermore, we have $\textup{Dom}(\mathscr{T})\cap F=\{0\}$, thus it implies the statement \ref{Finite exten 1}. 

To prove \ref{Finite exten 2}, we modify functions in $F$ such that we have functions belonging to $\textup{Dom}(\mathscr{L}_{\alpha})$. Let us consider a function $\phi_{\alpha}(x)$ defined on $\R$ as follows
\begin{align*}
\phi_{\alpha}(x):= \left(\frac{\alpha}{2}|x|+1\right)\phi(x).
\end{align*}
It is clear that $\phi_{\alpha}\in H^1(\R)\cap H^2(\R\setminus\{0\})$ and  $\phi_{\alpha}(0)=1$, $\phi_{\alpha}'(0^{+})=\frac{\alpha}{2}$ and $\phi_{\alpha}'(0^{-})=-\frac{\alpha}{2}$. This implies that $\phi_{\alpha} \in \textup{Dom}(\mathscr{L}_{\alpha})$. It is also clear that $x\Phi_{a}(x)\in \textup{Dom}(\mathscr{L}_{\alpha})$, therefore, we have 
$$G:=\textup{span}\{\phi_{\alpha}(x), x\phi_{\alpha}(x)\}\subset \textup{Dom}(\mathscr{L}_{\alpha}).$$ Take $u \in \textup{Dom}(\mathscr{L}_{\alpha})$, we can verify that
\[v(x)\coloneqq u(x)-\left[u(0)\phi_{\alpha}(x)+\frac{u'(0^{+})+u'(0^{-})}{2} x\phi_{\alpha}(x)\right]\in \textup{Dom}(\mathscr{T}).\]
Indeed, by direct computation, we have $v(0)=u(0)-u(0)=0$ and
\begin{align*}
v'(0^{+})&= u'(0^{+})-\frac{\alpha u(0)}{2}- \frac{u'(0^{+})+u'(0^{-})}{2} =\frac{u'(0^{+})-u'(0^{-})-\alpha u(0)}{2}=0,\\
v'(0^{-})&=u'(0^{-})+\frac{\alpha u(0)}{2}- \frac{u'(0^{+})+u'(0^{-})}{2}=-\frac{u'(0^{+})-u'(0^{-})-\alpha u(0)}{2}=0,
\end{align*}
where the last equalities comes from the jump condition of $u$ in $\textup{Dom}(\mathscr{L}_{\alpha})$. In other words, we have shown that $\textup{Dom}(\mathscr{L}_{\alpha})\subset \textup{Dom}(\mathscr{T})+G$. Therefore, we obtain the equality $\textup{Dom}(\mathscr{L}_{\alpha})= \textup{Dom}(\mathscr{T})+G$ (since both $\textup{Dom}(\mathscr{T})$ and $G$ are subspaces of $\textup{Dom}(\mathscr{L}_{\alpha})$). It is easy to check that $\textup{Dom}(\mathscr{T})\cap G=\emptyset$, thus the statement \ref{Finite exten 2} follows since $\dim G=2$.

Thanks to \ref{Closedness}, \ref{Finite exten 1}, \ref{Finite exten 2}, we can apply \cite[Corollary IX.4.2]{Edmunds-Evans18} to obtain
\[ \sigma_{\textup{ek}}(\mathscr{L})=\sigma_{\textup{ek}}(\mathscr{T})=\sigma_{\textup{ek}}(\mathscr{L}_{\alpha}) \qquad \text{ for }k\in \{1,2,3\}.\]
The statement on essential spectra of $\mathscr{L}_{\alpha}$ in other type, \emph{i.e.}, $\textup{k}\in \{4,5\}$,  is followed from \cite[Proposition 5.4.4]{Krejcirik-Siegl15} because $\C \setminus \sigma_{\textup{e1}}(\mathscr{L}_{\alpha})$ is connected.
\subsection{Existence of the eigenvalue: Proof of Theorem \ref{Theo Spectrum L delta}}
Since $\mathscr{L}_{\alpha}$ is $\mathcal{T}$-self-adjoint (see Proposition \ref{Prop Property delta}), its residual spectrum is empty, thus its spectrum is decomposed as a union of the point spectrum and the continuous spectrum. Thanks to Lemma \ref{Lem Essential Spectra delta}, the set $[V_{+},+\infty)\cup [V_{-},+\infty)$ are essential spectrum of $\mathscr{L}_{\alpha}$. Therefore, take $z\in \C \setminus [V_{+},+\infty)\cup [V_{-},+\infty)$, then $\Ran(\mathscr{L}_{\alpha}-z)$ is closed and this implies that $z \in \C \setminus \sigma_{\textup{c}}(\mathscr{L}_{\alpha})$ (because if $z\in \sigma_{\textup{c}}(\mathscr{L}_{\alpha})$, we have a contradiction with the definition of the continuous spectrum). In other words, we have shown that
\begin{equation}\label{Continous Spec delta 1}
\sigma_{\textup{c}}(\mathscr{L}_{\alpha})\subset [V_{+},+\infty)\cup [V_{-},+\infty).
\end{equation}
Since the action of $\mathscr{L}_{\alpha}$ and $\mathscr{L}$ are the same, we can use the argument in the proof of Proposition \ref{Prop Resolvent} to solve the eigenvalue equation $(\mathscr{L}_{\alpha}-z)u=0$. From \eqref{u+-} with a notice that $f=0$ in the argument, the general solution restricted on $\R_{\pm}$, denoted as $u_{\pm}$, reads as 
\begin{equation}\label{Eigenfunction}
 u_{\pm}(x)=A_{\pm} e^{k_{\pm}(z)x} + B_{\pm}e^{-k_{\pm}(z)x},\qquad \text{for }\pm x >0.
\end{equation}
In the same manner as in the proof of Theorem \ref{Theo Spectrum of L} (subsection \ref{Subsec Spectrum L}), we can easily show that no point in $[V_{+},+\infty)\cup [V_{-},+\infty)$ can be the eigenvalue of $\mathscr{L}_{\alpha}$ and thus, 
\[ [V_{+},+\infty)\cup [V_{-},+\infty) \subset \sigma_{\textup{c}}( \mathscr{L}_{\alpha}).\]
Combining this inclusion with \eqref{Continous Spec delta 1}, we conclude \eqref{Continuous Spec delta}. The fact that the point spectrum is the discrete spectrum comes from \cite[Proposition 5.4.3]{Krejcirik-Siegl15} which states that $\sigma_{\textup{dis}}(\mathscr{L}_{\alpha})=\sigma(\mathscr{L}_{\alpha})\setminus \sigma_{\textup{e5}}(\mathscr{L}_{\alpha})$. 

Now, we look for the eigenvalue of $\mathscr{L}_{\alpha}$ in the set $\C \setminus \left([V_{+},+\infty)\cup [V_{-},+\infty) \right)$. Take $z \in \C \setminus \left([V_{+},+\infty)\cup [V_{-},+\infty) \right)$ and assume that $(z,u)$ is the eigenpair of $\mathscr{L}_{\alpha}$. From the decaying of $u$ at $\pm \infty$, with reasons as in \eqref{A+} and \eqref{B-}, it implies that $A_{+}=0$ and $B_{-}=0$. Replacing these conditions into \eqref{Eigenfunction}, we get
\begin{equation}\label{Eigenfunction delta}
u(x) =  \left\{\begin{aligned}
&B_{+} e^{-k_{+}(z)x}, \qquad &&\text{ for }x>0,\\
&A_{-} e^{k_{-}(z)x}, \qquad &&\text{ for }x<0.
\end{aligned} \right.
\end{equation}
In order $u$ belong to $\textup{Dom}(\mathscr{L}_{\alpha})$, it is necessary that $A_{-}$ and $B_{+}$ are non-trivial solution of the system
\[ \left\{ \begin{aligned}
&B_{+}-A_{-}=0,\\
&-k_{+}(z)B_{+} -(k_{-}(z)+\alpha) A_{-} = 0.
\end{aligned}\right.\]
Vice versa, if there exists non zero constants $A_{-}$ and $B_{+}$ satisfies the above system, it is easy to check that $u(x)$ given by \eqref{Eigenfunction delta} is indeed the eigenfunction associated with the eigenvalue $z \in \C \setminus \left([V_{+},+\infty)\cup [V_{-},+\infty) \right)$. This is equivalent to the fact that $z$ is eigenvalue if and only if $z$ satisfies the following algebraic equation depending on the parameter $\alpha \in \C$
\begin{equation}\label{Eig Equation}
k_{+}(z)+k_{-}(z) = -\alpha,
\end{equation}
and $z \notin \left([V_{+},+\infty)\cup [V_{-},+\infty) \right)$. We can assume that $\alpha\neq 0$ (since when $\alpha=0$, then $\mathscr{L}_{0}=\mathscr{L}$, we already know that there is not eigenvalue in this case). Using the fact that
\[ k_{+}(z)^2 - k_{-}(z)^2 = V_{+}-V_{-},\]
from \eqref{Eig Equation}, we implies that
\begin{equation}\label{Eq 1}
k_{+}(z)-k_{-}(z) = -\frac{V_{+}-V_{-}}{\alpha}.
\end{equation}
Thanks to \eqref{Eig Equation} and \eqref{Eq 1}, we get
\begin{equation}\label{Eq 2}
\sqrt{V_{+}-z}=-\frac{\alpha}{2}-\frac{V_{+}-V_{-}}{2\alpha} \text{ and } \sqrt{V_{-}-z}=-\frac{\alpha}{2}+\frac{V_{+}-V_{-}}{2\alpha}.
\end{equation}
By squaring one of two equations in \eqref{Eq 2}, it yields that
\begin{equation}\label{Sol of eigenvalue}
z=z(\alpha)=\frac{V_{+}+V_{-}}{2}-\frac{(V_{+}-V_{-})^2}{4\alpha^2}-\frac{\alpha^2}{4}.
\end{equation}
Thus, we have shown that if the equation \eqref{Eig Equation} has solution, then its solution is necessary in the form \eqref{Sol of eigenvalue}.
It means that, let $\alpha\in \C$, the point $z(\alpha)$ is the eigenvalue of $\mathscr{L}_{\alpha}$ if and only if $\alpha$ satisfies
\begin{equation}\label{Eig Equation alpha}
\left\{
\begin{aligned}
&\sqrt{V_{+}-z(\alpha)}+\sqrt{V_{-}-z(\alpha)}=-\alpha,\\
&z(\alpha)\notin [V_{+},+\infty)\cup [V_{-},+\infty).
\end{aligned}
\right.
\end{equation}
Since two equations in \eqref{Eq 2} are implication equations of \eqref{Eig Equation}, then $\alpha$ satisfying \eqref{Eig Equation alpha} if and only if
\begin{equation}\label{Eig Equation alpha 2}
\left\{
\begin{aligned}
&\sqrt{V_{+}-z(\alpha)}=-\frac{\alpha}{2}-\frac{V_{+}-V_{-}}{2\alpha},\\
&\sqrt{V_{-}-z(\alpha)}=-\frac{\alpha}{2}+\frac{V_{+}-V_{-}}{2\alpha},\\
&z(\alpha)\notin [V_{+},+\infty)\cup [V_{-},+\infty).
\end{aligned}
\right.
\end{equation}
We will show that \eqref{Eig Equation alpha 2} is equivalent to
\begin{equation}\label{Eig Equation alpha 3}
\left\{
\begin{aligned}
&\Re \left(-\frac{\alpha}{2}-\frac{V_{+}-V_{-}}{2\alpha}\right)>0,\\
&\Re \left(-\frac{\alpha}{2}+\frac{V_{+}-V_{-}}{2\alpha}\right)>0.\\
\end{aligned}
\right.
\end{equation}
Indeed we prove the statement in two directions
\begin{itemize}
\item \eqref{Eig Equation alpha 2} $\Longrightarrow$ \eqref{Eig Equation alpha 3}: Since $V_{+}-z(\alpha) \notin (-\infty,0]$, then the real part of the square root $\sqrt{V_{+}-z(\alpha)}$ must be positive. The same argument for the real part of the square root $\sqrt{V_{-}-z(\alpha)}$.
\item \eqref{Eig Equation alpha 3} $\Longrightarrow$ \eqref{Eig Equation alpha 2}: Using the fact that, for $w$ is a complex number, $\sqrt{w^2}=w$ if $\Re w>0$, we have
\begin{align*}
\sqrt{V_{+}-z(\alpha)} =\sqrt{\left(-\frac{\alpha}{2}-\frac{V_{+}-V_{-}}{2\alpha}\right)^2}=-\frac{\alpha}{2}-\frac{V_{+}-V_{-}}{2\alpha},\\
\sqrt{V_{-}-z(\alpha)} =\sqrt{\left(-\frac{\alpha}{2}+\frac{V_{+}-V_{-}}{2\alpha}\right)^2}=-\frac{\alpha}{2}+\frac{V_{+}-V_{-}}{2\alpha}.
\end{align*}
If $V_{\pm}-z(\alpha)\in (-\infty,0]$, then $\Re \sqrt{V_{\pm}-z(\alpha)}=0$ and thus, we have a contradiction with \eqref{Eig Equation alpha 3}. Therefore, the last condition in \eqref{Eig Equation alpha 2} is obtained.
\end{itemize}
In summary, we have just shown that the point $z(\alpha)$ is the eigenvalue of $\mathscr{L}_{\alpha}$ if and only if $\alpha$ satisfies conditions in \eqref{Eig Equation alpha 3}. By elementary calculations, the conditions for $\alpha$ in \eqref{Eig Equation alpha 3}  can be read as 
\[ \left\vert \langle V_{+}-V_{-}, \alpha \rangle_{\R^2}\right\vert< -|\alpha|^2\Re \alpha ,\]
which is the description of the set $\Omega$ in the statement of this theorem. The eigenfunction associated with $z(\alpha)$ must be in the form of \eqref{Eigenfunction delta} with $A_{-}=B_{+}$, then, thanks to \eqref{Eig Equation alpha 2}, the eigenspace associated with $z(\alpha)$ is expressed as 
\[\textup{Ker}(\mathscr{L}_{\alpha}-z(\alpha)) =\{C u_{\alpha}: C\in \C \},\]
where $u_{\alpha}$ is given by \eqref{Eigenfunction delta 0}.

\subsection{Asymptotic resolvent norm of $\mathscr{L}_{\alpha}$: Proof of Theorem \ref{Theo Norm of resolvent delta}}.
Since the action of $\mathscr{L}$ and $\mathscr{L}_{\alpha}$ are the same, by following the proof of Proposition \ref{Prop Resolvent}, the resolvent of $\mathscr{L}_{\alpha}$ can be constructed in the same way. It means that the solution $u$ of the resolvent equation $(\mathscr{L}_{\alpha}-z)u=f$ can be found in the form of \eqref{u+-}, in which
\begin{itemize}
\item $u_{\pm}$ is the restriction of $u$ on $\R_{\pm}$;
\item $\alpha_{\pm}$ and $\beta_{\pm}$ are functions defined in \eqref{alpha beta} where $A_{+}$ and $B_{-}$ are numbers defined in \eqref{A+} and \eqref{B-} by the decaying conditions of $u_{\pm}$ at $\pm \infty$ while $A_{-}$ and $B_{+}$ are found by the continuity of $u$ at zero ($u_{+}(0)=u_{-}(0)$) and the jump condition ($u_{+}'(0)-u_{-}'(0)=\alpha u(0)$).
\end{itemize}
Then, the resolvent of $\mathscr{L}_{\alpha}$ can be expressed in the integral form,
\begin{equation}
\left[\left(\mathscr{L}_{\alpha}-z\right)^{-1} f\right](x) = \int_{\R} \mathcal{R}_{z,\alpha}(x,y) f(y)\, \dd y ,
\end{equation}
where $\mathcal{R}_{z,\alpha}(x,y)$ is defined by
\begin{equation*}
\mathcal{R}_{z,\alpha}(x,y)\coloneqq \left\{
\begin{aligned}
			 & \frac{1}{2k_{+}(z)}e^{-k_{+}(z)|x-y|}+\frac{k_{+}(z)-k_{-}(z)-\alpha}{2k_{+}(z)\left(k_{+}(z)+k_{-}(z)+\alpha\right)}e^{-k_{+}(z)(x+y)} && \text{for } \{ x> 0, y> 0\}; \\
			 &  \frac{1}{2k_{-}(z)}e^{-k_{-}(z)|x-y|}-\frac{k_{+}(z)-k_{-}(z)+\alpha}{2k_{-}(z)\left(k_{+}(z)+k_{-}(z) +\alpha\right)}e^{k_{-}(z)(x+y)} && \text{for } \{x < 0, y< 0\};\\ 		 
			 & \frac{1}{k_{+}(z)+k_{-}(z)+\alpha}e^{-k_{+}(z)x+k_{-}(z)y} && \text{for } \{x> 0, y< 0\}; \\
			 & \frac{1}{k_{+}(z)+k_{-}(z)+\alpha}e^{k_{-}(z)x-k_{+}(z)y}  && \text{for } \{x< 0,y> 0\}.
		\end{aligned}
\right.
\end{equation*}
Next, we decompose the resolvent as in Section \ref{Section Resolvent Estimate}, that is
\[ (\mathscr{L}_{\alpha}-z)^{-1}= \mathcal{R}_{1,z,\alpha}+\mathcal{R}_{2,z,\alpha},\]
where
\begin{equation}
\mathcal{R}_{1,z,\alpha}f(x)\coloneqq\int_{\R} \mathcal{R}_{1,z,\alpha}(x,y) f(y)\, \dd y,\qquad \mathcal{R}_{2,z,\alpha}f(x)\coloneqq\int_{\R} \mathcal{R}_{2,z,\alpha}(x,y) f(y)\, \dd y,
\end{equation}
with
\begin{align}
\mathcal{R}_{1,z,\alpha}(x,y) &=  \left\{
\begin{aligned}
			 &K_{++}(z,\alpha)e^{-k_{+}(z)(x+y)}, \qquad && \text{for } \{x> 0, y> 0\}; \\
			 & K_{--}(z,\alpha)e^{k_{-}(z)(x+y)},\qquad && \text{for } \{x < 0, y< 0\};\\  
			 & K_{+-}(z,\alpha)e^{-k_{+}(z)x+k_{-}(z)y}, \qquad&& \text{for } \{x>0, y< 0\}; \\			 
			 & K_{+-}(z,\alpha)e^{k_{-}(z)x-k_{+}(z)y}, \qquad && \text{for } \{x< 0, y> 0\}; 
		\end{aligned}
\right.\\
\mathcal{R}_{2,z,\alpha}(x,y)&=\mathcal{R}_{2,z}(x,y).
\end{align}
in which
\begin{equation*}
\begin{aligned}
K_{++}(z,\alpha)&\coloneqq\frac{k_{+}(z)-k_{-}(z)-\alpha}{2k_{+}(z)\left(k_{+}(z)+k_{-}(z)+\alpha\right)},\qquad K_{--}(z,\alpha)\coloneqq-\frac{k_{+}(z)-k_{-}(z)+\alpha}{2k_{-}(z)\left(k_{+}(z)+k_{-}(z) +\alpha\right)},\\
K_{+-}(z,\alpha) &\coloneqq\frac{1}{k_{+}(z)+k_{-}(z)+\alpha}.
\end{aligned}
\end{equation*}
By applying the proof of Proposition \ref{Prop Norm R1}, the norm of $\mathcal{R}_{1,z,\alpha}$ is estimated as follows, for all $z\in \rho(\mathscr{L}_{\alpha})$,
\begin{equation}\label{Bound R1 delta}
\begin{aligned}
\Vert \mathcal{R}_{1,z, \alpha} \Vert &\leq \frac{1}{\sqrt{2}} \sqrt{\sqrt{(A(z,\alpha)-C(z,\alpha))^2+B(z,\alpha)^2}+A(z,\alpha)+C(z,\alpha)+2D(z,\alpha)},\\
\Vert \mathcal{R}_{1,z, \alpha} \Vert &\geq \frac{1}{\sqrt{2}} \sqrt{\sqrt{(A(z,\alpha)-C(z,\alpha))^2+\widetilde{B}(z,\alpha)^2}+A(z,\alpha)+C(z,\alpha)+2D(z,\alpha)},
\end{aligned}
\end{equation}
where the formulas of $A(z,\alpha)$, $B(z,\alpha)$, $C(z,\alpha)$, $D(z,\alpha)$, $\widetilde{B}(z,\alpha)$ are defined by replacing $K_{++}(z)$, $K_{--}(z)$  and $K_{+-}(z)$ in the formulas of $A(z)$, $B(z)$, $C(z)$, $D(z)$, $\widetilde{B}(z)$ by the above $K_{++}(z,\alpha)$, $K_{--}(z,\alpha)$ and $K_{+-}(z,\alpha)$. Let us recall and reuse here the conventions at the beginning of Section \ref{Section Resolvent Estimate}. Thanks to Lemma \ref{Lem Asymptotic}, it is easily to obtain the following asymptotic expansions
\begin{align*}
|k_{+}(z)-k_{-}(z)-\alpha|&=2\sqrt{\tau}\left(1+\mathcal{O}\left(\frac{1}{|\tau|^{1/2}}\right)\right),\\
|k_{+}(z)-k_{-}(z)+\alpha|&=2\sqrt{\tau}\left(1+\mathcal{O}\left(\frac{1}{|\tau|^{1/2}}\right)\right),\\
|k_{+}(z)+k_{-}(z)+\alpha|&=|\alpha|\left(1+\mathcal{O}\left(\frac{1}{|\tau|^{1/2}}\right)\right),\\
|K_{++}(z,\alpha)|&= \frac{1}{|\alpha|}\left(1+\mathcal{O}\left(\frac{1}{|\tau|^{1/2}}\right)\right),\\
|K_{--}(z,\alpha)|&= \frac{1}{|\alpha|}\left(1+\mathcal{O}\left(\frac{1}{|\tau|^{1/2}}\right)\right),\\
|K_{+-}(z,\alpha)|&= \frac{1}{|\alpha|}\left(1+\mathcal{O}\left(\frac{1}{|\tau|^{1/2}}\right)\right).
\end{align*}
From these expansions, the asymptotic formulas of $A(z,\alpha),C(z,\alpha),D(z,\alpha)$ and $B(z,\alpha),\widetilde{B}(z,\alpha)$ are deduced, that is
\begin{align*}
A(z,\alpha)&=\frac{\tau}{|\alpha|^2|\Im V_{+}-\delta|^2}\left(1+\mathcal{O}\left(\frac{1}{|\tau|^{1/2}}\right)\right),\\
C(z,\alpha)&=\frac{\tau}{|\alpha|^2|\Im V_{-}-\delta|^2}\left(1+\mathcal{O}\left(\frac{1}{|\tau|^{1/2}}\right)\right),\\
D(z,\alpha)&=\frac{\tau}{|\alpha|^2|\Im V_{+}-\delta||\Im V_{-}-\delta|}\left(1+\mathcal{O}\left(\frac{1}{|\tau|^{1/2}}\right)\right),\\
B(z,\alpha)&=\frac{2|\Im V_{+}-\Im V_{-}|\tau}{|\alpha|^2|\Im V_{+}-\delta|^{3/2}|\Im V_{-}-\delta|^{3/2}}\left(1+\mathcal{O}\left(\frac{1}{|\tau|^{1/2}}\right)\right),\\
\widetilde{B}(z,\alpha)&=\frac{2|\Im V_{+}-\Im V_{-}|\tau}{|\alpha|^2|\Im V_{+}-\delta|^{3/2}|\Im V_{-}-\delta|^{3/2}}\left(1+\mathcal{O}\left(\frac{1}{|\tau|^{1/2}}\right)\right).
\end{align*}
Then, by using \eqref{Relation Im V}, we obtain
\begin{align*}
\sqrt{\left(A(z,\alpha)-C(z,\alpha)\right)^2+B(z,\alpha)^2} = \frac{|\Im V_{+}-\Im V_{-}|^2\tau}{|\alpha|^2|\Im V_{+}-\delta|^{2}|\Im V_{-}-\delta|^{2}}\left(1+\mathcal{O}\left(\frac{1}{|\tau|^{1/2}}\right)\right),\\
A(z,\alpha)+C(z,\alpha)+2D(z,\alpha)= \frac{|\Im V_{+}-\Im V_{-}|^2\tau}{|\alpha|^2|\Im V_{+}-\delta|^{2}|\Im V_{-}-\delta|^{2}}\left(1+\mathcal{O}\left(\frac{1}{|\tau|^{1/2}}\right)\right).\\
\end{align*}
From \eqref{Bound R1 delta}, since $B(z,\alpha)$ and $\widetilde{B}(z,\alpha)$ have the same asymptotic behaviors, we deduce that
\[\Vert \mathcal{R}_{1,z,\alpha} \Vert =\frac{|\Im V_{+}-\Im V_{-}|\sqrt{\tau}}{|\alpha||\Im V_{+}-\delta||\Im V_{-}-\delta|}\left(1+\mathcal{O}\left(\frac{1}{|\tau|^{1/2}}\right)\right).\]
By employing the upper bound \eqref{Bound above norm R2} for the norm of $\mathcal{R}_{2,z}$, it implies that $\mathcal{R}_{2,z,\alpha}$ is indeed a small perturbation of $\mathcal{R}_{1,z,\alpha}$:
\[ \frac{\Vert \mathcal{R}_{2,z,\alpha} \Vert }{\Vert \mathcal{R}_{1,z,\alpha} \Vert}=\mathcal{O}\left(\frac{1}{|\tau|^{1/2}}\right).\]
Then, the triangle inequality provides us 
\[ \Vert (\mathscr{L}_{\alpha}-z)^{-1} \Vert= \Vert \mathcal{R}_{1,z,\alpha} \Vert\left(1+\mathcal{O}\left(\frac{1}{|\tau|^{1/2}}\right)\right),\]
and the conclusion of the theorem follows.
\subsection{Accurate pseudomode for $\mathscr{L}_{\alpha}$: Proof of Theorem \ref{Theo Pseudomode delta}}
Let us fix $\alpha\in \C\setminus \{0\}$ and $z\in \rho(\mathscr{L}_{\alpha})$. What we need to do is to follow the proof of Theorem \ref{Theo Pseudomode} in subsection \ref{Subsec Pseudomode} and modify some calculations of this proof. We started by choosing the pseudomode in the form 
\[ \Psi_{z,\alpha}(x)= \Psi_{1,z,\alpha}(x) + \Psi_{2,z,\alpha}(x), \]
where
\begin{equation*}
\begin{aligned}
\Psi_{1,z,\alpha}(x) &= n_{1}(z,\alpha) e^{k_{-}(z)x}\textbf{\textup{1}}_{\R_{-}}(x)+p_{1}(z,\alpha) e^{-k_{+}(z)x}\textbf{\textup{1}}_{\R_{+}}(x),\\
\Psi_{2,z,\alpha}(x) &= n_{2}(z,\alpha) e^{\overline{k_{-}(z)}x}\textbf{\textup{1}}_{\R_{-}}(x)+p_{2}(z,\alpha) e^{-\overline{k_{+}(z)}x}\textbf{\textup{1}}_{\R_{+}}(x),
\end{aligned}
\end{equation*}
in which $n_{1}(z,\alpha),n_{2}(z,\alpha),p_{1}(z,\alpha),p_{2}(z,\alpha)$ are complex numbers to be determined later. At the end of the proof, we will see that $n_{2}$ and $p_{2}$ can be chosen independently of $\alpha$, and thus, so is $\Psi_{2,z,\alpha}$. In order $\Psi_{z,\alpha}$ to belong to the domain $\textup{Dom}(\mathscr{L}_{\alpha})$,  it is necessary that $\Psi_{z,\alpha}(0^{+})=\Psi_{z,\alpha}(0^{-})$ and $\Psi_{z,\alpha}'(0^{+})-\Psi_{z,\alpha}'(0^{-})=\alpha \Psi_{z,\alpha}(0)$ which impose the following conditions on the coefficients of the pseudomode
\begin{equation*}
\left\{ \begin{aligned}
&n_{1}(z,\alpha)+n_{2}(z,\alpha)=p_{1}(z,\alpha)+p_{2}(z,\alpha),\\
&-k_{+}(z)p_{1}(z,\alpha)-\overline{k_{+}(z)}p_{2}(z,\alpha)-k_{-}(z)n_{1}(z,\alpha)-\overline{k_{-}(z)}n_{2}(z,\alpha)=\alpha(p_{1}(z,\alpha)+p_{2}(z,\alpha)).
\end{aligned}
\right.
\end{equation*}
Since $z\in \rho(\mathscr{L}_{\alpha})$, $k_{+}(z)+k_{-}(z)+\alpha\neq 0$ (see the argument around \eqref{Eig Equation}), then $n_{1}(z,\alpha)$ and $p_{1}(z,\alpha)$ can be calculated in terms of $n_{2}(z,\alpha)$ and $p_{2}(z,\alpha)$ as follows
\begin{equation}
\left\{ \begin{aligned}
&n_{1}(z,\alpha)=-\frac{k_{+}(z)+\overline{k_{- }(z)}+\alpha}{k_{+}(z)+k_{-}(z)+\alpha}n_{2}(z,\alpha)+\frac{k_{+}(z)-\overline{k_{+}(z)}}{k_{+}(z)+k_{-}(z)+\alpha}p_{2}(z,\alpha),\\
&p_{1}(z,\alpha)=\frac{k_{-}(z)-\overline{k_{-}(z)}}{k_{+}(z)+k_{-}(z)+\alpha}n_{2}(z,\alpha)-\frac{\overline{k_{+}(z)}+k_{-}(z)+\alpha}{k_{+}(z)+k_{-}(z)+\alpha}p_{2}(z,\alpha).
\end{aligned}
\right.
\end{equation}
Thanks to the condition $\Psi_{z,\alpha}(0^{+})=\Psi_{z,\alpha}(0^{-})$, we can easily show that $\Psi_{z,\alpha}\in H^1(\R)$ and then, $\Psi_{z,\alpha}' \in H^1(\R\setminus \{0\})$, see \eqref{Psi'}. After that, the jump condition $\Psi_{z,\alpha}'(0^{+})-\Psi_{z,\alpha}'(0^{-})=\alpha \Psi_{z,\alpha}(0)$ implies that $\Psi_{z}\in \textup{Dom}(\mathscr{L}_{\alpha})$.
Let us recall and reuse the convention in the beginning of Section \ref{Section Resolvent Estimate} and now we send $z \to \infty$ inside the strip bounded by two essential spectrum lines $[V_{+},+\infty)$ and $[V_{-},+\infty)$. By imposing the same assumptions \eqref{Ass 1} on coefficients $n_{2}$, $p_{2}$ and working as in \eqref{n1}, we obtain
\begin{equation}
\begin{aligned}
|n_{1}(z,\alpha)|^2 &= \frac{4\tau |n_{2}(z,\alpha)-p_{2}(z,\alpha)|^2}{|\alpha|^2}\left(1+\mathcal{O}\left(\frac{1}{|\tau|^{1/2}}\right)\right),\\ |p_{1}(z,\alpha)|^2 &= \frac{4\tau |n_{2}(z,\alpha)-p_{2}(z,\alpha)|^2}{|\alpha|^2}\left(1+\mathcal{O}\left(\frac{1}{|\tau|^{1/2}}\right)\right).
\end{aligned}
\end{equation}
The norms squared of $\Psi_{1,z,\alpha}$ and $\Psi_{2,z,\alpha}$ can be computed explicitly as same as in \eqref{Norm Psi1} and then, we can show that $\Psi_{2,z,\alpha}$ is just a small perturbation compared with $\Psi_{1,z,\alpha}$, more precisely, $\Vert \Psi_{2,z,\alpha} \Vert = \mathcal{O}\left(\frac{1}{|\tau|^{1/2}}\right) \Vert \Psi_{1,z,\alpha} \Vert$ and thus, by triangle inequality, we have
\[
\Vert \Psi_{z,\alpha} \Vert = \Vert \Psi_{1,z,\alpha} \Vert \left(1 +\mathcal{O}\left(\frac{1}{|\tau|^{1/2}}\right) \right).\]
Without difficulty, we obtain the asymptotic behavior for the quotient
\begin{align*}
\frac{\Vert (\mathscr{L}_{\alpha}-z) \Psi_{z,\alpha} \Vert^2}{\Vert \Psi_{z,\alpha}\Vert^2}=&\frac{|\alpha|^2|\Im V_{+}-\delta| | \Im V_{-}-\delta|}{\tau |\Im V_{+}-\Im V_{-}|} \\
& \times\frac{|\Im V_{-}-\delta| |n_{2}(z,\alpha)|^2+|\Im V_{+}-\delta| |p_{2}(z,\alpha)|^2}{|n_{2}(z,\alpha)-p_{2}(z,\alpha)|^2}\left(1 +\mathcal{O}\left(\frac{1}{|\tau|^{1/2}}\right) \right).
\end{align*}
As same as the argument at the end of the subsection \ref{Subsec Pseudomode}, $n_{2}$ and $p_{2}$ will be chosen to minimize the function 
$$F(n_{2},p_{2})=\frac{|\Im V_{-}-\delta| |n_{2}|^2+|\Im V_{+}-\delta| |p_{2}|^2}{|n_{2}-p_{2}|^2},$$
and satisfies the assumptions in \eqref{Ass 1}, and they are
\[ n_{2}(z,\alpha)=n_{2}(z)=-|\Im V_{+}-\delta|,\qquad p_{2}(z,\alpha)=p_{2}(z)=|\Im V_{-}-\delta|.\]
With these values of $n_{2}$ and $p_{2}$, the conclusion of Theorem \ref{Theo Pseudomode delta} follows.
\appendix
\section{Properties of the operator $\mathscr{L}$: Proof of Proposition \ref{Prop Property}}\label{Appendix 1}
Let us introduce a translated sesquilinear of $Q$, that is
\[ \widetilde{Q}(u,v)\coloneqq Q(u,v)+(1-\min \{\Re V_{+}, \Re V_{-}\})\langle u,v\rangle, \qquad \textup{Dom}(\widetilde{Q})\coloneqq H^1(\R).\]
The coercivity of the sesquilinear $\widetilde{Q}$ is given by, for all $u\in H^1(\R)$,
\begin{align*}
\left\vert \widetilde{Q}(u,u) \right\vert\geq \Re\, \widetilde{Q}(u,u) &= \Vert u' \Vert^2+ \Vert u \Vert^2+(\Re V_{+}-\min \{\Re V_{+}, \Re V_{-}\})\int_{0}^{+\infty}\vert u \vert^2\,\dd x \\
&\qquad +(\Re V_{-}-\min \{\Re V_{+}, \Re V_{-}\})\int_{-\infty}^{0}\vert u \vert^2\,\dd x\\
&\geq  \Vert u \Vert_{H^1}^2.
\end{align*}
It is easy to obtain the continuity of $\widetilde{Q}$, \emph{i.e.,} there exists a positive constant $C$ such that $\left\vert\widetilde{Q}(u,v)\right\vert\leq C \Vert u \Vert_{H^1}\Vert v \Vert_{H^1}$. Then, by Lax-Milgram theorem (cf. \cite[Theorem 2.89]{Cheverry-Raymond21}), $\widetilde{Q}$ is associated with a closed, densely defined and bijective operator $\widetilde{\mathscr{L}}$ whose domain is given by
\begin{align*}
&\textup{Dom}\left(\widetilde{\mathscr{L}}\right)=\left\{\begin{aligned}
u \in H^1(\R): &\text{ there exists } \widetilde{f}\in L^2(\R) \text{ such that }\\
&\widetilde{Q}(u,v)=\left\langle \widetilde{f},v \right\rangle \text{ for all } v\in H^1(\R)
\end{aligned} \right\},\\
&\left\langle \widetilde{\mathscr{L}}u,v \right\rangle =\widetilde{Q}(u,v), \qquad \forall u \in \textup{Dom}\left(\widetilde{\mathscr{L}}\right), \forall v\in H^1(\R).
\end{align*}
Then, the operator $\mathscr{L}$ defined by shifting the operator $\widetilde{\mathscr{L}}$, that is
\[ \mathscr{L}\coloneqq \widetilde{\mathscr{L}}- \left(1-\min \{\Re V_{+}, \Re V_{-}\}\right),\qquad \textup{Dom}(\mathscr{L})=\textup{Dom}(\widetilde{\mathscr{L}}).\]
\begin{enumerate}[label=\textbf{(\arabic*)}]
\item We will show that $\textup{Dom}\left(\mathscr{L}\right)=H^2(\R)$. Let $u\in \textup{Dom}\left(\mathscr{L}\right)$, by considering $v$ in \eqref{Laxmilgram eq} on the space of test functions $C_{c}^{\infty}(\R)$, then for each $u \in \textup{Dom}\left(\widetilde{\mathscr{L}}\right)$, we get in the distributional sense that
\[ \mathscr{L}u = -u''+V(x)u. \]
Since $\mathscr{L}u \in L^2(\R)$ and $V(x)u\in L^2(\R)$, we implies that $u''\in L^2(\R)$ and thus $u\in H^{2}(\R)$. Therefore, $\textup{Dom}\left(\mathscr{L}\right)\subset H^2(\R)$. The remaining direction $H^2(\R)\subset \textup{Dom}\left(\mathscr{L}\right)$ is easily obtained by integration by parts. Since $\widetilde{\mathscr{L}}$ is bijective, it implies that $-\left(1-\min \{\Re V_{+}, \Re V_{-}\}\right)\in \rho(\mathscr{L})$. In other words, $\rho(\mathscr{L})$ is nonempty.
\item Let us recall here the definition of the numerical range of an operator, it is a subset in $\C$ defined by
\[ \textup{Num}(\mathscr{L})=\left\{ \left\langle \mathscr{L} \psi, \psi \right\rangle \in \C:\, \psi \in \textup{Dom}(\mathscr{L}),\, \Vert \psi \Vert=1 \right\}.\]
Given $\psi \in \textup{Dom}(\mathscr{L})$  such that $\Vert \psi \Vert=1$, we have
\begin{align*}
\left\langle \mathscr{L} \psi, \psi \right\rangle = &\Vert\psi'\Vert^2+  V_{+}\int_{0}^{+\infty} \vert \psi \vert^2\, \dd x+V_{-}\int_{-\infty}^{0} \vert \psi \vert^2\, \dd x .
\end{align*}
Let $s=\int_{0}^{+\infty} |\psi|^2\, \dd x$, it is clear that $s\in [0,1]$ because of the normalization of $\psi$, then we can write
\begin{align*}
\left\langle \mathscr{L} \psi, \psi \right\rangle = \Vert\psi'\Vert^2+  sV_{+}+(1-s)V_{-}.
\end{align*}
This implies that $ \textup{Num}(\mathscr{L})\subset \{(0,+\infty)+sV_{+}+(1-s)V_{-}:\, s\in [0,1]\}$. In order to prove the remaining direction, we fix a function $f \in C_{c}^{\infty}(\R)$ such that $\textup{supp} \, f\subset \R_{+}$ and $\Vert f \Vert=\Vert f \Vert_{L^2(\R_{+})}=1$. By setting a family of functions in $C_{c}^{\infty}(\R)$, for $\lambda>0$,
\[ \psi_{\lambda}(x) := \lambda^{\frac{1}{2}}f(\lambda x),\]
it is obvious that $\Vert \psi_{\lambda} \Vert= \Vert f \Vert=1$ and $\Vert \psi_{\lambda}' \Vert= \lambda \Vert f' \Vert$. Therefore,
\[ \left\langle \mathscr{L} \psi_{\lambda}, \psi_{\lambda} \right\rangle=t+V_{+},\]
where $t=\lambda \Vert f' \Vert^2$ can take arbitrary positive value when $\lambda$ runs on $(0,\infty)$. Consequently, $[V_{+},+\infty)\subset  \textup{Num}(\mathscr{L})$. In the same manner by choosing a function $f$ whose support lies inside $\R_{-}$, we also obtain $[V_{-},+\infty)\subset  \textup{Num}(\mathscr{L})$. In other words, two lines $[V_{+},+\infty)$ and $[V_{-},+\infty)$ are contained in $\textup{Num}(\mathscr{L})$. Since $\textup{Num}(\mathscr{L})$ is a convex set, it also contains the convex hull (\emph{i.e.} the smallest convex set containing) of the set $[V_{+},+\infty)\cup [V_{-},+\infty)$, which is precisely the set $\{(0,+\infty)+sV_{+}+(1-s)V_{-}:\, s\in [0,1]\}$. Therefore, the numerical range is described exactly as in \eqref{Num Range}.

We use \cite[Proposition 3.19]{Schmudgen12} to show that $\mathscr{L}$ is a $m$-sectorial operator. By choosing a sector  $S_{c,\theta}$ whose vertex $c$ is some point in the middle of two points $\min\{\Re V_{+}, \Re V_{-}\}-1$ and $\min\{\Re V_{+}, \Re V_{-}\}$ on the real axis with a suitable semi-angle $\theta \in [0, \frac{\pi}{2})$ such that the numerical range $\textup{Num}(\mathscr{L})$ is included inside $S_{c,\theta}$, since the point $\min\{\Re V_{+}, \Re V_{-}\}-1 \in \rho(\mathscr{L})\setminus S_{c,\theta}$, $\mathscr{L}$ is $m$-sectorial.
\item The operator $\mathscr{L}$ can be seen as the sum of a self-adjoint operator $-\frac{\dd^2}{\dd x^2}$ (with domain $H^{2}(\R)$) and a bounded operator $V(x)$ (on $L^2(\R)$), then, thanks to \cite[Prop. 1.6(vii)]{Schmudgen12}, we have
\[ \mathscr{L}^{*} = \left(-\frac{\dd^2}{\dd x^2} + V(x)\right)^{*}= \left(-\frac{\dd^2}{\dd x^2} \right)^{*}+\left(V(x) \right)^{*}=-\frac{\dd^2}{\dd x^2} + \, \overline{V(x)}.\]
By directing computation, we get that, for all $f\in H^2(\R)$,
\begin{equation}\label{Normality Eq}
\begin{aligned}
\Vert \mathscr{L} f \Vert^2 &=  \Vert -f'' + (\Re V)f \Vert^2 + \Vert (\Im V)f \Vert^2 + 2 \Im \left\langle -f'', \Im V f\right\rangle,\\
\Vert \mathscr{L}^{*} f \Vert^2 &=  \Vert -f'' + (\Re V)f \Vert^2 + \Vert (\Im V)f \Vert^2 - 2 \Im \left\langle -f'', \Im V f\right\rangle.
\end{aligned}
\end{equation}
Since the operator $\mathscr{L}$ and its adjoint $\mathscr{L}^{*}$ share the same domain, $\mathscr{L}$ is normal if and only if $\Vert \mathscr{L} f \Vert=\Vert \mathscr{L}^{*} f \Vert$ for all $f\in H^2(\R)$, or
\[ \Im \left\langle -f'', \Im V f\right\rangle =0, \qquad \forall f \in H^2(\R).\]
By using integral by parts, this condition is equivalent to
\[ \left( \Im V_{+} - \Im V_{-}\right) \Im \left( f'(0) \overline{f(0)}\right)=0,\qquad \forall f \in H^2(\R).\]
Since there exists $f\in H^2(\R)$ such that $\Im \left( f'(0) \overline{f(0)}\right) \neq 0$, for example, $f(x)=\frac{1}{\sqrt{x^2+1}}+i\frac{x}{x^2+1}$, thus the normality of $\mathscr{L}$ is equivalent to $\Im V_{+}=\Im V_{-}$.
From the formula of $\mathscr{L}$ and $\mathscr{L}^{*}$, it is obviously that $\mathscr{L}$ is self-adjoint iff $V=\overline{V}$, \emph{i.e.}, $\Im V=0$. Next, let us check when $\mathscr{L}$ is $\mathcal{P}$-self-adjoint, $\mathcal{T}$-self-adjoint and $\mathcal{PT}$-symmetric by straightforward  computation, given $u\in H^2(\R)$ and $x\in \R$,
\begin{equation}\label{PJ self adjoint}
\begin{aligned}
\mathcal{P}\mathscr{L} \mathcal{P}u(x) &= -u''(x)+V(-x) u(x),\\
\mathcal{T}\mathscr{L} \mathcal{T}u(x) &= -u''(x)+\overline{V}(x) u(x),\\
\mathscr{L} \mathcal{PT}u(x) &=-\overline{u}''(-x)+V(x)\overline{u}(-x),\\
\mathcal{PT}\mathscr{L}u(x) &=-\overline{u}''(-x)+\overline{V}(-x)\overline{u}(-x).
\end{aligned}
\end{equation}
Thus, all the conclusions on $\mathcal{P}$-self-adjointness, $\mathcal{T}$-self-adjointness and $\mathcal{PT}$-symmetry of $\mathscr{L}$ follows obviously.
\end{enumerate}
\section{Properties of the operator $\mathscr{L}_{\alpha}$: Proof of Proposition \ref{Prop Property delta}}\label{Appendix L a}
First recall that for $u\in H^1(\R)$ then $|u|^2 \in H^1(\R)$ and $\left(|u|^2\right)'=2 \Re(u' \overline{u})$ (cf. \cite[Corollary 8.10]{Brezis11}). Thus, for any $\varepsilon>0$ and for any $u\in H^1(\R)$, we have
\begin{equation}\label{Inequality}
\begin{aligned}
|u(0)|^2 =\int_{-\infty}^{0} \left(|u(x)|^2\right)' \dd x =\int_{-\infty}^{0} 2 \Re(u' \overline{u})\, \dd x \leq &2 \Vert u\Vert_{L^2(\R_{-})} \Vert u' \Vert_{L^2(\R_{-})}\\
\leq &2 \Vert u\Vert \Vert u' \Vert \leq \varepsilon \Vert u' \Vert^2+ \frac{1}{\varepsilon} \Vert u \Vert^2.
\end{aligned}
\end{equation}
In the same manner as defining the operator $\mathscr{L}$ as in Proposition \ref{Prop Property}, we define the operator $\mathscr{L}_{\alpha}$ via Lax-Milgram theorem applying for a translated operator. By choosing and fixing some $\varepsilon>0$ such that $\varepsilon|\Re \alpha|<1$, we introduce a translated sesquilinear 
\begin{align*}
\widetilde{Q_{\alpha}}(u,v)\coloneqq Q_{\alpha}(u,v)+C_{\Re \alpha, \Re V}\langle u,v\rangle,\qquad\textup{Dom}(\widetilde{Q_{\alpha}})\coloneqq H^1(\R),
\end{align*}
where $C_{\Re \alpha, \Re V}$ is a constant depending on $\Re \alpha$, $\Re V_{+}$, $\Re V_{-}$ defined by
\[ C_{\Re \alpha, \Re V}\coloneqq 1-\varepsilon|\Re \alpha|-\min \{\Re V_{+}, \Re V_{-}\}+\frac{|\Re \alpha|}{\varepsilon}.\]
By using the inequality \eqref{Inequality}, we can show that, for all $u\in H^1(\R)$,
\begin{align*}
\left\vert \widetilde{Q_{\alpha}}(u,u) \right\vert\geq &\Re \widetilde{Q_{\alpha}}(u,u)\\
 =& \Vert u' \Vert^2 + \left(1-\varepsilon|\Re \alpha|+\frac{|\Re \alpha|}{\varepsilon}\right)\Vert u \Vert^2+(\Re V_{+}-\min \{\Re V_{+}, \Re V_{-}\})\int_{0}^{+\infty}\vert u \vert^2\,\dd x \\
&+(\Re V_{-}-\min \{\Re V_{+}, \Re V_{-}\})\int_{-\infty}^{0}\vert u \vert^2\,\dd x +\Re(\alpha)|u(0)|^2\\
\geq &(1-\varepsilon|\Re \alpha|) \Vert u \Vert_{H^1}^2.
\end{align*}
Thus, $ \widetilde{Q_{\alpha}}$ is coercive. Using \eqref{Inequality} again,  the continuity of $\widetilde{Q_{\alpha}}$ is easily obtained.  Then, thanks to Lax-Milgram theorem, there exists a closed densely defined and bijective operator $\widetilde{\mathscr{L}}_{\alpha}$ that is defined by
\begin{align*}
&\textup{Dom}\left(\widetilde{\mathscr{L}_{\alpha}}\right)=\left\{\begin{aligned}
u \in H^1(\R): &\text{ there exists } \widetilde{f}\in L^2(\R) \text{ such that }\\
&\widetilde{Q_{\alpha}}(u,v)=\left\langle \widetilde{f},v \right\rangle \text{ for all } v\in H^1(\R)
\end{aligned} \right\},\\
&\left\langle \widetilde{\mathscr{L}_{\alpha}}u,v \right\rangle =\widetilde{Q_{\alpha}}(u,v), \qquad \forall u \in \textup{Dom}\left(\widetilde{\mathscr{L}_{\alpha}}\right), \forall v\in H^1(\R).
\end{align*}
Then, we define the operator $\mathscr{L}_{\alpha}$ as usual
\[ \mathscr{L}_{\alpha}\coloneqq \widetilde{\mathscr{L}_{\alpha}}- C_{\Re \alpha, \Re V},\qquad \textup{Dom}(\mathscr{L}_{\alpha})\coloneqq\textup{Dom}(\widetilde{\mathscr{L}_{\alpha}}).\]
\begin{enumerate}[label=\textbf{\textup{(\arabic*)}}]
\item Let us analyze to clear the domain $\textup{Dom}\left(\widetilde{\mathscr{L}_{\alpha}}\right)$ up. Take $u\in  \textup{Dom}(\widetilde{\mathscr{L}_{\alpha}})$, there exists $f\in L^2(\R)$ such that, for all $v\in H^1(\R)$,
\begin{equation}\label{Lax-Mil Eq}
\int_{\R}u'(x) \overline{v'(x)}\, \dd x+\int_{\R} V(x)u(x) \overline{v(x)}\, \dd x+\alpha u(0)\overline{v(0)} =\int_{\R} f(x) \overline{v(x)} \, \dd x.
\end{equation}
By restricting our consideration for $v\in \C_{c}^{1}(\R\setminus \{0\})$, we get
\[ \int_{\R}u'(x) \overline{v'(x)}\, \dd x =-\int_{\R}\left( -f(x)+V(x)u(x)\right)\overline{v(x)}\, \dd x.\]
Thus, $u'\in H^1(\R\setminus \{0\})$ and hence $u\in H^2(\R\setminus \{0\})$. From the Sobolev embedding, $u'(0^{+})$ and $u'(0^{-})$ are well-defined. Now, considering $v\in H^{1}(\R)$ and using integral by parts for two functions $u'$ and $v$ on $\R_{+}$ and $\R_{-}$, we have
\begin{equation}\label{Integration by parts}
\int_{\R} u'(x) \overline{v'(x)} \dd x = \left(u'(0^{-})-u'(0^{+}) \right)\overline{v(0)}-\int_{\R}u''(x) \overline{v(x)} \, \dd x. 
\end{equation}
Replacing this into \eqref{Lax-Mil Eq}, we deduce that
\begin{align*}
\left\vert u'(0^{-})-u'(0^{+})+\alpha u(0) \right\vert \vert v(0) \vert \leq \left( \Vert u'' \Vert +\Vert V \Vert_{L^{\infty}} \Vert u \Vert+ \Vert f \Vert \right) \Vert v \Vert.
\end{align*}
It follows that $u'(0^{-})-u'(0^{+})+\alpha u(0)=0$, since there exists a sequence $v_{n}$ in $H^{1}(\R)$ such that $v_{n}(0)=1$ and $\Vert v_{n} \Vert \xrightarrow{n\to +\infty} 0$ (for example, take a function $\varphi \in C_{c}^{1}(\R)$ satisfying $\varphi(0)=1$ and consider $v_{n}(x)=\varphi(n x)$ for all $n\in \N$). Hence, we have proved that
\[ u\in \textup{Dom}(\mathscr{L}_{\alpha}) \Rightarrow u\in H^1(\R)\cap H^2(\R \setminus \{0\}) \text{ and }u'(0^{+})-u'(0^{-})=\alpha u(0).\]
Conversely, if $u \in H^1(\R)\cap H^2(\R\setminus\{0\})$, by using \eqref{Integration by parts} with the jump condition $u'(0^{+})-u'(0^{-})=\alpha u(0)$, we have
\[ Q_{\alpha}(u,v)= \langle -u"+V(x)u, v \rangle, \qquad \forall v\in H^1(\R).\]
and thus, $u$ belongs to the domain of $\mathscr{L}_{\alpha}$. Furthermore, this also implies the action of $\mathscr{L}_{\alpha}$ on its domain. Clearly, $-C_{\Re \alpha, \Re V}\in  \rho(\mathscr{L})$.
\item Let $u\in \textup{Dom}(\mathscr{L}_{\alpha})$ such that $\Vert u \Vert=1$, from \eqref{Laxmilgram eq Interaction}, we have
\[ \langle \mathscr{L}_{\alpha} u, u \rangle = \Vert u' \Vert^2+ V_{+} \Vert u' \Vert_{L^2(\R_{+})}^2+ V_{-} \Vert u' \Vert_{L^2(\R_{+})}^2+ \alpha |u(0)|^2.\]
Let $\varepsilon>0$ and using inequality \eqref{Inequality}, we get
\begin{align*}
\vert\Im  \langle \mathscr{L}_{\alpha} u, u \rangle \vert\leq  \varepsilon|\Im \alpha| \Vert u' \Vert^2+\frac{|\Im \alpha| }{\varepsilon} +\max\{|\Im V_{+}|,|\Im V_{-}| \}, 
\end{align*}
and
\begin{align*}
\Re \langle \mathscr{L}_{\alpha} u, u \rangle\geq (1-\varepsilon |\Re \alpha|) \Vert u' \Vert^2 + \min \{\Re V_{+}, \Re V_{-}\} - \frac{|\Re \alpha|}{\varepsilon}.
\end{align*}
By choosing $\varepsilon=\frac{1}{|\Re \alpha|+|\Im \alpha|}$, we deduce that
\[ \Re \langle \mathscr{L}_{\alpha} u, u \rangle -\vert\Im  \langle \mathscr{L}_{\alpha} u, u \rangle \vert\geq C_{V,\alpha},\]
where $C_{V,\alpha}$ defined in \eqref{Vertex}. In other words, we have shown that
$\langle\mathscr{L}_{\alpha} u, u \rangle \in \{\lambda \in \C: |\Im \lambda| \leq \Re \lambda -C_{V,\alpha}\}=S_{C_{V,\alpha},\frac{\pi}{4}}$. The arbitrariness of $u\in \textup{Dom}(\mathscr{L}_{\alpha})$ gives us the inclusion $N(\mathscr{L}_{\alpha})\subset S_{C_{V,\alpha},\frac{\pi}{4}}$. From this, it is also clear that $\mathscr{L}_{\alpha}$ is $m$-sectorial.

\item Let us consider a conjugate transpose sequilinear of $Q_{\alpha}(u,v)$, that is
\[ \widehat{Q_{\alpha}}(u,v)\coloneqq\overline{Q_{\alpha}(v,u)},\qquad \textup{Dom}(\widehat{Q_{\alpha}})=H^1(\R),\]
More precisely, for all $u,v\in H^1(\R)$,
\[ \widehat{Q_{\alpha}}(u,v)=\int_{\R} u'(x) \overline{v'(x)}\, \dd x + \overline{V_{+}}\int_{0}^{+\infty} u(x) \overline{v}(x)\, \dd x +\overline{V_{-}}\int_{-\infty}^{0} u(x) \overline{v}(x)\, \dd x+\overline{\alpha} u(0)\overline{v(0)}.\]
By working as above (translating the sesquilinear by the real constant $C_{\Re \alpha, \Re V}$ to obtain a coercive and continuous one, using Lax-Milgram to define a corresponding operator, and translating back this operator by the constant $C_{\Re \alpha, \Re V}$), we also can define an operator $\widehat{\mathscr{L}_{\alpha}}$ by
\begin{equation*}
\begin{aligned}
&\textup{Dom}(\widehat{\mathscr{L}_{\alpha}})=\{u\in H^1(\R)\cap H^2(\R\setminus\{0\}): u'(0^{+})-u'(0^{-})=\overline{\alpha} u(0) \},\\
&\widehat{\mathscr{L}_{\alpha}}u=-u''+\overline{V(x)}u, \qquad \forall u \in \textup{Dom}(\widehat{\mathscr{L}_{\alpha}}).
\end{aligned}
\end{equation*}
Thanks to \cite[Theorem 2.90]{Cheverry-Raymond21}, we have $\mathscr{L}_{\alpha}^{*}=\widehat{\mathscr{L}_{\alpha}}$. After obtain the formula of the adjoint, we will discuss about the normality of $\mathscr{L}_{\alpha}$. By comparing the domain of $\mathscr{L}_{\alpha}$ and $\mathscr{L}_{\alpha}^{*}$, we first notice that $\textup{Dom}(\mathscr{L}_{\alpha})=\textup{Dom}(\mathscr{L}_{\alpha}^{*})$ if and only if $\alpha\in \R$. Indeed, if $\alpha\in \R$, it is clear that two domains are the same and if $\alpha \in \C\setminus \R$, we consider, for example, $u(x)=e^{\frac{\alpha}{2}|x|} \varphi(x)$ with $\varphi$ is a smooth cut-off function equal $1$ in the neighborhood of zero, then $u\in \textup{Dom}(\mathscr{L}_{\alpha})$ but $u\notin \textup{Dom}(\mathscr{L}_{\alpha}^{*})$. Therefore, working as in \eqref{Normality Eq} and integrating by parts, we have a statement that $\mathscr{L}_{\alpha}$ is normal if and only if $\alpha \in \R$ and
\begin{equation}\label{Normality Interaction}
\left( \Im V_{+} f'(0^{+}) - \Im V_{-} f'(0^{-}) \right)\overline{f(0)}=0, \qquad \forall f\in \textup{Dom}(\mathscr{L}_{\alpha}).
\end{equation}
We consider the following cases:
\begin{enumerate}
\item[\textbf{Case 1}:] $\Im V_{+} \neq \Im V_{-}$. We consider, for example, the function 
$$f(x) = \left(e^{\frac{(1-\Im V_{+} \alpha) x}{\Im V_{+}-\Im V_{-}}}\textbf{\textup{1}}_{\R_{-}}(x)+e^{\frac{(1-\Im V_{-} \alpha) x}{\Im V_{+}-\Im V_{-}}}\textbf{\textup{1}}_{\R_{+}}(x)\right) \varphi(x), $$
where $\varphi(x)$ is a smooth cut-off function mentioned above, then,  
\[ f(0)=1,\qquad f'(0^{+}) = \frac{1-\Im V_{-} \alpha}{\Im V_{+}-\Im V_{-}}, \qquad f'(0^{-}) = \frac{1-\Im V_{+} \alpha}{\Im V_{+}-\Im V_{-}}.\]
It is clear that $f\in \textup{Dom}(\mathscr{L}_{\alpha})$ and $\left( \Im V_{+} f'(0^{+}) - \Im V_{-} f'(0^{-}) \right)\overline{f(0)}=1$. Therefore, $\mathscr{L}_{\alpha}$ is nonnormal in this case.
\item[\textbf{Case 2}:] $\Im V_{+} = \Im V_{-}=0$. Since \eqref{Normality Interaction} is automatically satisfied for all $f\in \textup{Dom}(\mathscr{L}_{\alpha})$, $\mathscr{L}_{\alpha}$ is normal if and only if $\alpha\in \R$.
\item[\textbf{Case 3}:] $\Im V_{+} = \Im V_{-}\neq 0$. Condition \eqref{Normality Interaction} is equivalent to
\[ \alpha |f(0)|^2=0 \qquad \forall f\in \textup{Dom}(\mathscr{L}_{\alpha}).\]
It is easy to check, using the function $e^{\frac{\alpha}{2}|x|} \varphi(x)$ mentioned above, that $\mathscr{L}_{\alpha}$ is normal if and only if $\alpha=0$.
\end{enumerate}
From the above cases, the necessary and sufficient conditions for the normality of $\mathscr{L}_{\alpha}$ is shown as in the statement of this proposition. The condition for self-adjointness of $\mathscr{L}_{\alpha}$ follows easily from comparing the domains and the actions of $\mathscr{L}_{\alpha}$ and $\mathscr{L}_{\alpha}^{*}$. In order to check when $\mathscr{L}_{\alpha}$ is $\mathcal{P}$-self-adjoint, $\mathcal{T}$-self-adjoint and $\mathcal{PT}$-symmetric, we notice that since actions of $\mathscr{L}_{\alpha}$ and $\mathscr{L}$ (similarly, $\mathscr{L}_{\alpha}^{*}$ and $\mathscr{L}^{*}$) are the same when they act on their domains, thus we can use \eqref{PJ self adjoint} to get the conditions on $V$ as in Proposition \ref{Prop Property}. However, unlike the operator $\mathscr{L}$, the domains of $\mathscr{L}_{\alpha}$ and its adjoint $\mathscr{L}_{\alpha}^{*}$ are not the same, it leads to the fact that $\textup{Dom}(\mathscr{L}_{\alpha}^{*})$ and $\textup{Dom}(\mathcal{P}\mathscr{L}_{\alpha}\mathcal{P})$ (or $\textup{Dom}(\mathcal{T}\mathscr{L}_{\alpha}\mathcal{T})$) are not necessary the same. Thus, we need to check them carefully to get condition on $\alpha$.
\begin{align*}
\textup{Dom}(\mathcal{P}\mathscr{L}_{\alpha}\mathcal{P})=&\{u\in L^2(\R): u(-x)\in \textup{Dom}(\mathscr{L}_{\alpha}) \}\\
=&\{u\in H^1(\R)\cap H^2(\R\setminus\{0\}): u'(0^{+})-u'(0^{-})=-\alpha u(0) \},
\end{align*}
and
\begin{align*}
\textup{Dom}(\mathcal{T}\mathscr{L}_{\alpha}\mathcal{T})=&\{u\in L^2(\R): \overline{u(x)}\in \textup{Dom}(\mathscr{L}_{\alpha}) \}\\
=&\{u\in H^1(\R)\cap H^2(\R\setminus\{0\}): u'(0^{+})-u'(0^{-})=\overline{\alpha} u(0) \}.
\end{align*}
Therefore, we have $\textup{Dom}(\mathcal{P}\mathscr{L}_{\alpha}\mathcal{P})=\textup{Dom}(\mathscr{L}_{\alpha}^{*})$ if and only if $\overline{\alpha}=-\alpha$, in other words, $\Re \alpha=0$, while $\textup{Dom}(\mathcal{T}\mathscr{L}_{\alpha}\mathcal{T})=\textup{Dom}(\mathscr{L}_{\alpha}^{*})$ is always true for any $\alpha\in \C$. In the same manner, we can check that $\textup{Dom}(\mathcal{PT}\mathscr{L}_{\alpha}) \subset \textup{Dom}(\mathscr{L}_{\alpha}\mathcal{PT})$ is ensured if and only if $\Re \alpha=0$.
\end{enumerate}
\bibliographystyle{abbrv}
\bibliography{Ref}
\end{document}